\definecolor{light}{gray}{.75}
\theoremstyle{plain}
\newtheorem{theorem}{Theorem}[section]
\newtheorem{lemma}[theorem]{Lemma}
\newtheorem{corollary}[theorem]{Corollary}
\newtheorem{proposition}[theorem]{Proposition}
\theoremstyle{definition}
\newtheorem{definition}[theorem]{Definition}
\newtheorem{example}[theorem]{Example}
\newenvironment{claim}[1]
{\par\addvspace{\medskipamount}\noindent\emph{#1.}\em\ \ignorespaces}
{\normalfont\par\addvspace{\medskipamount}}
\def\arA{\mathbf A}
\def\arB{\mathbf B}
\def\arC{\mathbf C}
\def\arD{\mathbf D}
\def\lk{\operatorname{lk}}
\def\d{\mathbf d}
\def\Z{\mathbf Z}
\def\n{\mathbf n}
\def\C{\mathcal C}
\def\D{\mathcal D}
\def\ext{{\operatorname{ext}}}
\def\R{\operatorname{\mathcal R}}
\def\Ext{\operatorname{Ext}}
\def\Rext{\R_\ext}
\def\myangle#1{\langle #1\rangle}
\def\case#1#2{\medskip\noindent\textbf{#1.\ \ #2}\hskip 1em\ignorespaces}
\begin{document}

\title
[Injectivity on the set of conjugacy classes]
{Injectivity on the set of conjugacy classes\\
of some monomorphisms between Artin groups}

\author{Eon-Kyung Lee and Sang-Jin Lee}

\address{Department of Mathematics, Sejong University,
Seoul, 143-747, Korea}
\email{eonkyung@sejong.ac.kr}

\address{Department of Mathematics, Konkuk University,
Seoul, 143-701, Korea}
\email{sangjin@konkuk.ac.kr}

\date{\today}

\begin{abstract}
There are well-known monomorphisms between
the Artin groups of finite type $\arA_n$, $\arB_n=\arC_n$
and affine type $\tilde \arA_{n-1}$, $\tilde\arC_{n-1}$.
The Artin group $A(\arA_n)$ is isomorphic to
the $(n+1)$-strand braid group $B_{n+1}$,
and the other three Artin groups are isomorphic to
some subgroups of $B_{n+1}$.
The inclusions between these subgroups yield monomorphisms
$A(\arB_n)\to A(\arA_n)$,
$A(\tilde \arA_{n-1})\to A(\arB_n)$ and
$A(\tilde \arC_{n-1})\to A(\arB_n)$.
There are another type of monomorphisms
$A(\arB_d)\to A(\arA_{md-1})$, $A(\arB_d)\to A(\arB_{md})$
and $A(\arB_d)\to A(\arA_{md})$
which are induced by isomorphisms between
Artin groups of type $\arB$ and centralizers of periodic braids.

In this paper, we show that the monomorphisms
$A(\arB_d)\to A(\arA_{md-1})$, $A(\arB_d)\to A(\arB_{md})$
and $A(\arB_d)\to A(\arA_{md})$
induce injective functions on the set of conjugacy classes,
and that none of the monomorphisms $A(\arB_n)\to A(\arA_n)$,
$A(\tilde \arA_{n-1})\to A(\arB_n)$ and
$A(\tilde \arC_{n-1})\to A(\arB_n)$ does so.

\medskip\noindent
\emph{Key words:} Artin group, braid group, conjugacy class,
Nielsen-Thurston classification\\
\emph{MSC:} 20F36, 20F10
\end{abstract}

\maketitle


\section{Introduction}

Let $M$ be a symmetric $n\times n$ matrix with entries
$m_{ij}\in\{2,3,\ldots,\infty\}$ for $i\ne j$,
and $m_{ii}=1$ for $1\le i\le n$.
The \emph{Artin group} of type $M$ is defined by the presentation
$$
A(M)=\langle s_1,\ldots,s_n\mid \underbrace{s_is_js_i\cdots}_{m_{ij}}
=\underbrace{s_js_is_j\cdots}_{m_{ij}}
\quad\mbox{for all $i\ne j$, $m_{ij}\ne\infty$}\rangle.
$$
The \emph{Coxeter group} $W(M)$ of type $M$ is the quotient
of $A(M)$ by the relation $s_i^2=1$.
The Artin group $A(M)$ is said to be \emph{of finite type}
if $W(M)$ is finite,
and \emph{of affine type}
if $W(M)$ acts as a proper, cocompact group of isometries on some
Euclidean space with the generators $s_1,\ldots,s_n$ acting
as affine reflections.
Artin groups are usually described by \emph{Coxeter graphs},
whose vertices are numbered $1,\ldots,n$
and which has an edge labelled $m_{ij}$
between the vertices $i$ and $j$
whenever $m_{ij}\ge 3$ or $m_{ij}=\infty$.
The label 3 is usually suppressed.

There are well-known monomorphisms
between the Artin groups of finite type $\arA_n$, $\arB_n=\arC_n$,
and affine type $\tilde \arA_{n-1}$, $\tilde \arC_{n-1}$.
The Coxeter graphs associated to these Artin groups
are as follows.

\smallskip
\begin{center}
\begin{tabular}{rlcrl}
\raisebox{.9em}{$\arA_n$} &
\includegraphics[scale=.4]{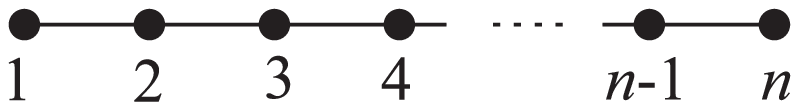} &\mbox{}\qquad\mbox{}&
\raisebox{.9em}{$\tilde \arA_{n-1}$} &
\includegraphics[scale=.4]{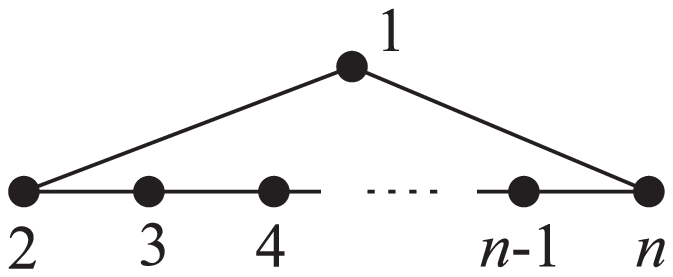}\\
\raisebox{.9em}{$\arB_n$} &
\includegraphics[scale=.4]{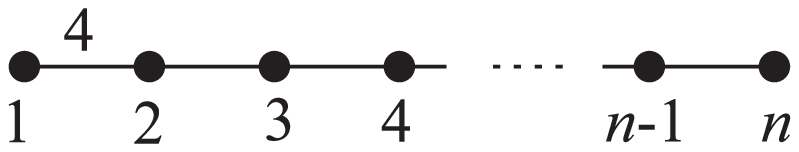} &&
\raisebox{.9em}{$\tilde \arC_{n-1}$} &
\includegraphics[scale=.4]{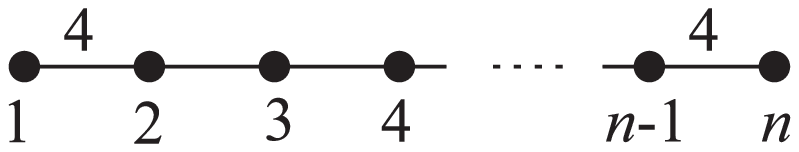}
\end{tabular}
\end{center}
\smallskip

In this paper, we investigate whether these monomorphisms
induce injective functions on the set of conjugacy classes or not.
Before stating our results, let us review briefly
braid groups and monomorphisms between these Artin groups.

\smallskip

Let $D^2=\{z\in\mathbb{C}\mid |z|\le n+1\}$,
and let $D_n$ be the $n$-punctured disk $D^2\setminus\{1,\ldots,n\}$.
The braid group $B_n$ is defined as the group of self-diffeomorphisms of $D_{n}$
that fix the boundary pointwise,
modulo isotopy relative to the boundary.
Equivalently, $n$-braids can be defined as isotopy classes
of collections of pairwise disjoint $n$ strands
$l=l_1\cup\cdots \cup l_n$ in $D^2\times[0,1]$ such that
$l\cap (D^2\times\{t\})$ consists of $n$ points for each $t\in[0,1]$,
and $l\cap (D^2\times\{0,1\})=\{1,\ldots,n\}\times\{0,1\}$.
The admissible isotopies lie in the interior of $D^2\times [0,1]$.
The $n$-braid group $B_n$ has the well-known Artin presentation~\cite{Art25}:
$$
B_n=\left\langle\sigma_1,\ldots,\sigma_{n-1}\biggm|
\begin{array}{ll}
\sigma_i\sigma_j=\sigma_j\sigma_i & \mbox{if } |i-j| \ge 2 \\
\sigma_i\sigma_{j}\sigma_i=\sigma_{j}\sigma_i\sigma_{j}
& \mbox{if } |i-j| = 1
\end{array}
\right\rangle.
$$

\begin{definition}
For $\alpha\in B_n$, let $\pi_\alpha$ denote
its induced permutation on $\{1,\ldots,n\}$.
If $\pi_\alpha(i)=i$, we say that $\alpha$ is \emph{$i$-pure},
or the $i$-th strand of $\alpha$ is \emph{pure}.
For $P\subset\{1,\ldots,n\}$,
we say that $\alpha$ is \emph{$P$-pure}
if $\alpha$ is $i$-pure for each $i\in P$.
\end{definition}

\begin{definition}\label{def:linking}
Let $B_{n,1}$ denote the subgroup of $B_n$ consisting of 1-pure braids.
Let $\lk:B_{n,1}\to \mathbb Z$ be the homomorphism measuring
the linking number of the first strand with the other strands:
$B_{n,1}$ is generated by $\sigma_1^2,\sigma_2,\ldots,\sigma_{n-1}$,
and $\lk$ is defined by $\lk(\sigma_1^2)=1$ and
$\lk(\sigma_i)=0$ for $i\ge 2$.
A braid $\alpha$ is said to be \emph{1-unlinked}
if it is 1-pure and $\lk(\alpha)=0$.
Let $\nu:B_{n,1}\to B_{n-1}$ be the homomorphism
deleting the first strand.
\end{definition}

For example, the braid in Figure~\ref{fig:p-pure}
is $\{1,4,5\}$-pure and 1-unlinked.
Note that $\{1,\ldots,n\}$-pure braids are
nothing more than pure braids in the usual sense,
and that $\lk$ is a conjugacy invariant of 1-pure braids
because
$\lk(\beta\alpha\beta^{-1})
=\lk(\beta)+\lk(\alpha)-\lk(\beta)
=\lk(\alpha)$ for $\alpha,\beta\in B_{n,1}$.

\begin{figure}
\includegraphics[scale=.8]{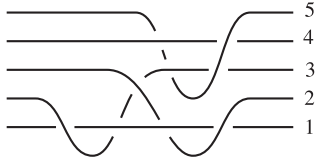}
\caption{This braid is $\{1,4,5\}$-pure  and 1-unlinked.}
\label{fig:p-pure}
\end{figure}

\medskip

From the presentation of $B_n$, it is obvious that
the Artin group $A(\arA_n)$ is isomorphic to $B_{n+1}$.
The other Artin groups $A(\arB_n)$, $A(\tilde \arA_{n-1})$
and $A(\tilde \arC_{n-1})$ are known to be isomorphic
to subgroups of $B_{n+1}$ as follows~\cite{All02, CC05, BM07}.
\begin{eqnarray*}
A(\arB_n) &\simeq&
\{\alpha\in B_{n+1}\mid \mbox{$\alpha$ is 1-pure} \}
= B_{n+1,1} \subset B_{n+1};\\
A(\tilde \arA_{n-1}) &\simeq&
\{\alpha\in B_{n+1}\mid \mbox{$\alpha$ is 1-unlinked} \}
\subset B_{n+1,1};\\
A(\tilde \arC_{n-1}) &\simeq&
\{\alpha\in B_{n+1}\mid \mbox{$\alpha$ is $\{1,n+1\}$-pure} \}
\subset B_{n+1,1}.
\end{eqnarray*}
From now on, we identify $A(\arA_n)$ with $B_{n+1}$, and
the Artin groups $A(\arB_n)$, $A(\tilde \arA_{n-1})$
and $A(\tilde \arC_{n-1})$ with the corresponding subgroups of $B_{n+1}$
induced by the above isomorphisms.
Then the inclusions between the subgroups of $B_{n+1}$ yield monomorphisms
$\psi_1:A(\arB_n)\to A(\arA_n)$,
$\psi_2:A(\tilde \arA_{n-1}) \to A(\arB_n)$ and
$\psi_3:A(\tilde \arC_{n-1}) \to A(\arB_n)$.
See Figure~\ref{fig:inj}.

\begin{figure}
$
\xymatrix{
& A(\arA_n)\\
& A(\arB_n) \ar[u]_{\psi_1}\\
A(\tilde \arA_{n-1}) \ar[ur]^{\psi_2}
  && A(\tilde \arC_{n-1}) \ar[ul]_{\psi_3}
}
\xymatrix{
A(\arA_{md-1})   && A(\arA_{md}) \\
& A(\arB_{md}) \ar[ul]_{\nu}\ar[ur]^{\psi_1}\\
& A(\arB_d)    \ar[u]_{\psi_4}\ar[luu]^{\psi_5}\ar[ruu]_{\psi_6}
}
$
\caption{$\psi_1,\ldots,\psi_6$ are monomorphisms between Artin groups.}
\label{fig:inj}
\end{figure}
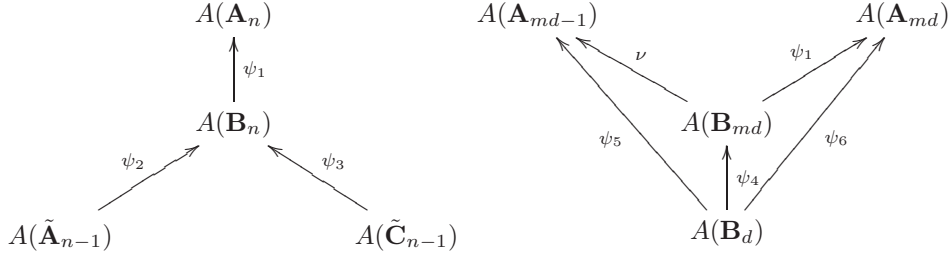

\medskip

There are another type of monomorphisms
which can be described by isomorphisms
between Artin groups of type $\arB$ and
centralizers of periodic braids.
The well-known Nielsen-Thurston classification
of mapping classes of surfaces (possibly with punctures)
with negative Euler characteristic into periodic,
reducible and pseudo-Anosov ones~\cite{Thu88}
yields an analogous classification of braids.
An $n$-braid $\alpha$ is said to be \emph{periodic}
if some power of it is a power of $\Delta^2$,
where $\Delta=\sigma_1(\sigma_2\sigma_1)\cdots
(\sigma_{n-1}\cdots\sigma_1)$;
\emph{reducible} if there is an essential curve
system in $D_n$ whose isotopy class is invariant
under the action of $\alpha$;
\emph{pseudo-Anosov} if there is a pseudo-Anosov diffeomorphism $f$
defined on the interior of $D_n$
representing $\alpha$ modulo $\Delta^2$,
that is, there exist a pair of transverse measured foliations
$(F^s,\mu^s)$ and $(F^u,\mu^u)$ and a real $\lambda>1$,
called \emph{dilatation}, such that
$f(F^s,\mu^s)=(F^s,\lambda^{-1}\mu^s)$ and
$f(F^u,\mu^u)=(F^u,\lambda\mu^u)$.
Every braid belongs to one of the following
three mutually disjoint classes:
periodic;
pseudo-Anosov;
non-periodic and reducible.
We say that two braids are \emph{of the same Nielsen-Thurston type}
if both of them belong to the same class.

Let $\delta=\sigma_{n-1}\cdots\sigma_1$ and $\epsilon=\delta\sigma_1$,
then $\delta^n = \Delta^2 = \epsilon^{n-1}$.
If we need to specify the braid index $n$,
we will write $\delta=\delta_{(n)}$,
$\epsilon = \epsilon_{(n)}$ and $\Delta = \Delta_{(n)}$.
The braids $\delta$ and $\epsilon$ are induced
by rigid rotations of the punctured disk as in Figure~\ref{fig:circ}
when the punctures are at the origin or evenly distributed
on a round circle centered at the origin.
Due to L.E.J.~Brouwer~\cite{Bro19}, B.~de K\'er\'ekj\'art\'o~\cite{Ker19} and
S.~Eilenberg~\cite{Eil34} (see also~\cite{CK94}),
it is known that an $n$-braid is periodic if and only if
it is conjugate to a power of $\delta$ or $\epsilon$.

\begin{figure}
\begin{tabular}{ccc}
\includegraphics[scale=.65]{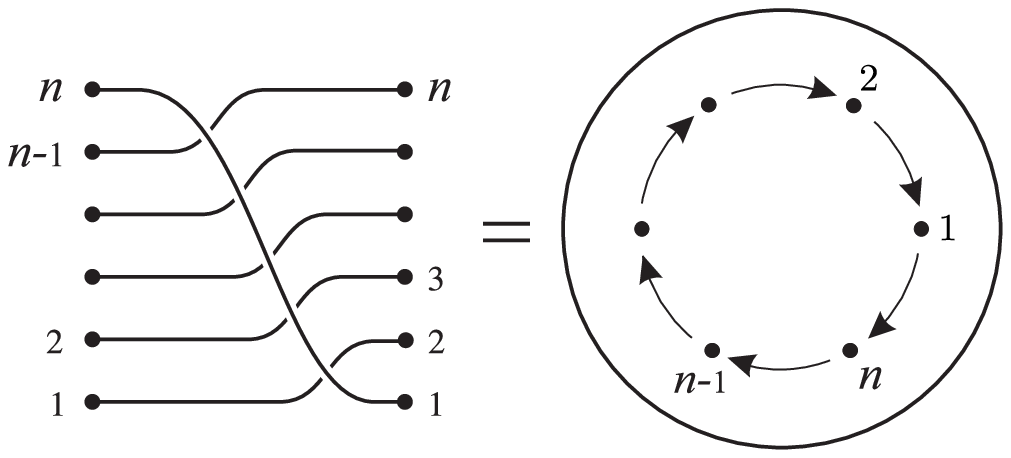}
& &
\includegraphics[scale=.65]{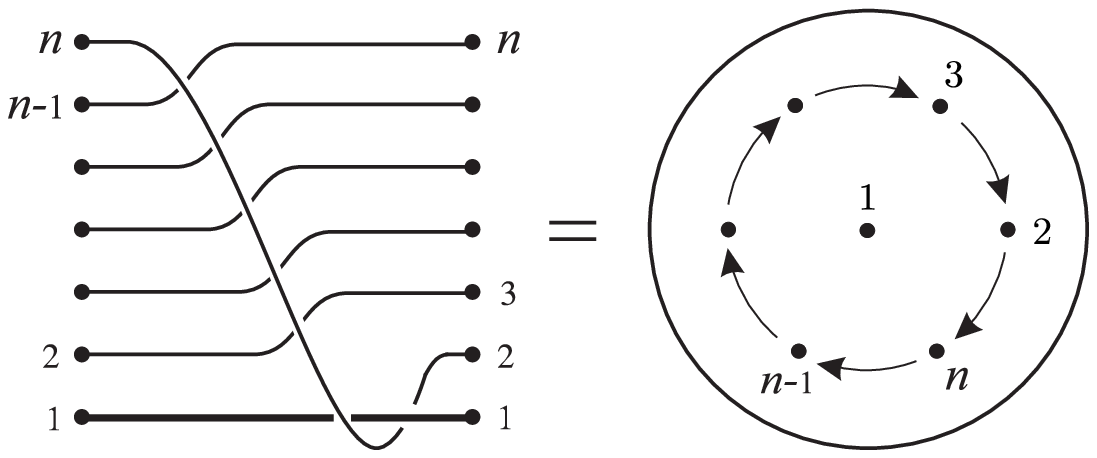}\\
(a) $\delta_{(n)} = \sigma_{n-1}\sigma_{n-2}\cdots\sigma_1 \in B_{n}$
&& (b) $\epsilon_{(n)} = \delta_{(n)}\sigma_1\in B_{n}$
\end{tabular}
\caption{
The braids $\delta$ and $\epsilon$
are represented by rigid rotations of punctured disks.}
\label{fig:circ}
\end{figure}

For a group $G$ and an element $g\in G$, let $Z_G(g)$ denote
the centralizer of $g$ in $G$, that is, $Z_G(g)=\{h\in G:gh=hg\}$.
If the group $G$ is clear from the context, we write simply $Z(g)$.
The following theorem is a consequence of
the result of D.~Bessis, F.~Digne and J.~Michel~\cite{BDM02}
on irreducible complex reflection groups.
See also~\cite{GW04}.

\begin{theorem}[\cite{BDM02}]
\label{thm:cent}
Let $\omega\in B_n$ be conjugate to $\delta^k$ (resp.\ $\epsilon^k$),
where $k$ is not a multiple of\/ $n$ (resp.\ $n-1$).
Then the centralizer $Z(\omega)$ is isomorphic to
the Artin group $A(\arB_d)$,
where $d=\gcd(k,n)$ (resp.\ $d=\gcd(k,n-1)$).
\end{theorem}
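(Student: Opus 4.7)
The plan is to reduce the statement to a geometric assertion about rotations of the punctured disk. Since conjugation is an inner automorphism of $B_n$, we have $Z(g\omega g^{-1})=gZ(\omega)g^{-1}$ for all $g\in B_n$, so it suffices to treat the case $\omega=\delta^k$ or $\omega=\epsilon^k$ directly. I would first re-place the punctures of $D_n$ evenly on a round circle centered at the origin (respectively, one at the origin and the rest on such a circle) so that $\delta$ (respectively $\epsilon$) becomes literally a rigid rotation of $D^2$, as in Figure~\ref{fig:circ}. Under this realization $\omega$ becomes a finite-order rotation of order $n/d$ (respectively $(n-1)/d$), nontrivial by the hypothesis on $k$, permuting the punctures in $d$ free orbits (together with one additional fixed puncture in the $\epsilon$-case).

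The next step is to express $Z(\omega)$ as an orbifold braid group. An equivariant isotopy argument should show that every element of $Z(\omega)$ is represented by a self-diffeomorphism of $D_n$ fixing $\partial D^2$ pointwise and commuting with $\omega$ on the nose; descending to the quotient $\mathcal O=D_n/\langle\omega\rangle$ then yields a homomorphism $Z(\omega)\to\operatorname{MCG}(\mathcal O)$. In the $\delta$-case, $\mathcal O$ is a disk carrying $d$ regular punctures and one interior cone point of order $n/d$; in the $\epsilon$-case, $\mathcal O$ is a disk carrying $d$ regular punctures together with one distinguished puncture at the image of the fixed puncture (no cone structure, since the rotation center already is a puncture). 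A careful lifting construction, based at a chosen point of $\partial D^2$, then promotes this to a full isomorphism with the orbifold mapping class group. Finally I would identify $\operatorname{MCG}(\mathcal O)$ with $A(\arB_d)$ by realizing it as the group of $(d+1)$-strand braids in which the strand based at the cone point (or the distinguished puncture) is pure; under the identifications of Section~1, this group is exactly $B_{d+1,1}\simeq A(\arB_d)$.

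The most delicate step is the equivariant lifting. One must show that every mapping class commuting with the isotopy class of $\omega$ is in fact represented by a diffeomorphism genuinely commuting with a rigid rotation, which calls for an averaging argument compatible with the pointwise fixing of $\partial D^2$; and one must verify that the resulting descent and lifting maps are two-sided inverse at the level of groups, by carefully tracking a basepoint on $\partial D^2$ and controlling the kernel generated by $\omega$ itself. Bessis, Digne and Michel bypass these topological subtleties by working in the algebraic setting of regular elements in irreducible complex reflection groups, so to stay self-contained I would instead follow the direct topological route above, which ultimately relies on the Brouwer--K\'er\'ekj\'art\'o--Eilenberg rotation theorem cited in the introduction to guarantee that every periodic braid is conjugate to an honest rigid rotation.
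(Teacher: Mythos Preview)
The paper does not prove Theorem~\ref{thm:cent}; it is quoted as a result of Bessis--Digne--Michel~\cite{BDM02} (with a pointer to~\cite{GW04}) and used as a black box. What the paper \emph{does} do, in \S\ref{sec:def-mu}, is spell out the explicit isomorphism $\varphi:B_{d+1,1}\to Z(\mu_{m,d})$ via the branched covering $\phi_{m,d}:D_n^{(m)}\to D_{d+1}$ and the identification $\eta_{m,d}:D_n^{(m)}\to D_n$: a $1$-pure $(d+1)$-braid is represented by a diffeomorphism of $D_{d+1}$, lifted to a $\rho_m$-equivariant diffeomorphism of $D_n^{(m)}$, and then transported to $D_n$. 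This is precisely the inverse of the ``descend to the quotient'' map you propose, so your sketch and the paper's construction are the same picture read in opposite directions; the paper simply does not pause to verify bijectivity, deferring that to~\cite{BDM02,GW04}.

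Your outline is essentially the topological argument of Gonz\'alez-Meneses--Wiest~\cite{GW04}, and it is sound. Two remarks on the points you flag. First, the equivariance step is cleaner in the lifting direction the paper uses: a diffeomorphism of the quotient has a \emph{unique} lift fixing $\partial D_n^{(m)}$ pointwise, and that lift is automatically $\rho_m$-equivariant; this gives a well-defined homomorphism $B_{d+1,1}\to Z(\mu_{m,d})$ with no averaging needed. The harder direction---showing every class in $Z(\omega)$ has an honestly $\omega$-equivariant representative---can be avoided by instead checking injectivity and surjectivity of the lifting map directly (injectivity from covering-space theory for isotopies; surjectivity by a dimension/generator count or by Birman--Hilden). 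Second, your identification of the quotient mapping class group with $B_{d+1,1}\simeq A(\arB_d)$ is correct in both cases: in the $\delta$-case the cone point behaves, for mapping-class purposes, exactly like a distinguished puncture, since orbifold diffeomorphisms are just diffeomorphisms of the underlying disk fixing the cone point; the cone order plays no role at the level of $\pi_0$.
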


Let $m\ge 2$ and $d\ge 1$.
The centralizer $Z(\epsilon_{(md+1)}^d)$ consists of 1-pure braids
because the first strand of $\epsilon_{(md+1)}^d$ is the only pure strand.
By theorem~\ref{thm:cent}, we have
\begin{eqnarray*}
A(\arB_d) &\simeq & Z(\epsilon_{(md+1)}^d)
\subset B_{md+1,1}=A(\arB_{md})
\subset B_{md+1}=A(\arA_{md});\\
A(\arB_d) &\simeq & Z(\delta_{(md)}^d)\subset B_{md}=A(\arA_{md-1}).
\end{eqnarray*}
Compositions of the above isomorphisms and inclusions yield monomorphisms
$\psi_4:A(\arB_d)\to A(\arB_{md})$,
$\psi_5:A(\arB_d)\to A(\arA_{md-1})$ and
$\psi_6:A(\arB_d)\to A(\arA_{md})$.
The monomorphisms $\psi_4$ and $\psi_5$ in~\cite{BDM02}
and~\cite{GW04} are such that
$\psi_4(\epsilon_{(d+1)})=\epsilon_{(md+1)}$,
$\psi_5(\epsilon_{(d+1)})=\delta_{(md)}$ and
$\psi_5=\nu\circ\psi_4$,
where $\nu: B_{md+1,1}\to B_{md}$ is the homomorphism deleting the first strand
(see Definition~\ref{def:linking}).
By definition, $\psi_6=\psi_1\circ \psi_4$.
See Figure~\ref{fig:inj}.
In~\cite{BGG06} J.~Birman, V.~Gebhardt and J.~Gonz\'alez-Meneses
described explicitly in terms of standard generators
the monomorphism $\psi_5: A(\arB_d)\to A(\arA_{md-1})$ when $m=2$.

\medskip

The following theorem is the main result of this paper.

\begin{theorem}
\label{thm:conj}
Let $\omega$ be a periodic $n$-braid,
and let $\alpha,\beta\in Z(\omega)$.
If $\alpha$ and $\beta$ are conjugate in $B_n$,
then they are conjugate in $Z(\omega)$.
\end{theorem}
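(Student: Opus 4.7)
The plan is to follow a replacement strategy: given any $\gamma\in B_n$ with $\gamma\alpha\gamma^{-1}=\beta$, modify $\gamma$ on the left by a suitable element of $Z(\beta)$ to produce a new conjugator that also lies in $Z(\omega)$. Set $\omega':=\gamma\omega\gamma^{-1}$. Since $\alpha,\beta\in Z(\omega)$ (so in particular $\omega\alpha=\alpha\omega$), a short computation gives
\[
\omega'\beta=\gamma\omega\gamma^{-1}\gamma\alpha\gamma^{-1}=\gamma\omega\alpha\gamma^{-1}=\gamma\alpha\omega\gamma^{-1}=\beta\omega',
\]
so $\omega'\in Z(\beta)$; we also have $\omega\in Z(\beta)$ by hypothesis, and $\omega'$ is $B_n$-conjugate to $\omega$, hence periodic of the same Nielsen-Thurston type. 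The theorem thus reduces to the following key claim: \emph{if $\omega_1,\omega_2\in Z(\beta)$ are periodic and conjugate in $B_n$, then they are already conjugate inside $Z(\beta)$.} Indeed, granted the claim, pick $\eta\in Z(\beta)$ with $\eta\omega'\eta^{-1}=\omega$; then $\eta\gamma$ lies in $Z(\omega)$, and $(\eta\gamma)\alpha(\eta\gamma)^{-1}=\eta\beta\eta^{-1}=\beta$, as required.

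To establish the key claim I would first use the classification of periodic braids to reduce, via an outer conjugation, to the case where $\omega_1$ is of the form $\delta_{(n)}^k$ or $\epsilon_{(n)}^k$, so that $\omega_1$ is realized geometrically as a rigid rotation of $D_n$. The argument then splits according to the Nielsen-Thurston type of $\beta$. When $\beta$ is periodic, Theorem~\ref{thm:cent} identifies $Z(\beta)$ with an Artin group of type $\arB$, and a geometric argument, realizing $\beta$, $\omega_1$, and $\omega_2$ as compatible symmetries of a common model disk, produces the desired conjugator in $Z(\beta)$. When $\beta$ is pseudo-Anosov, $Z(\beta)$ is virtually cyclic and contains only a restricted set of periodic elements (captured by the finite symmetry group of the stable/unstable foliations of $\beta$ together with powers of the center), so the claim reduces to a finite check. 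When $\beta$ is reducible and non-periodic, the canonical reduction system of $\beta$ is setwise invariant under both $\omega_1$ and $\omega_2$; one cuts along it, inducts on the braid index, and reassembles the conjugator from conjugators on the individual components.

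The main obstacle is the reducible non-periodic case. A conjugator built piece-by-piece from the Nielsen-Thurston decomposition need not commute with $\beta$ globally, and $\omega_1,\omega_2$ may permute the components of the reduction system in a priori incompatible ways. One must show that these component permutations can be simultaneously brought into coincidence by an element of $Z(\beta)$, and then harmonize the local conjugators so that the resulting global braid really commutes with $\beta$. In effect this is an equivariant conjugacy statement on the decomposition graph, which then feeds back into the periodic and pseudo-Anosov cases on each piece; thus the entire argument is anchored on the periodic base case, where Theorem~\ref{thm:cent} and the rigid-rotation model give the cleanest handle.
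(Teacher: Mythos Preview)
Your reduction to the ``key claim'' is correct, and the observation that $\omega'=\gamma\omega\gamma^{-1}$ lands in $Z(\beta)$ is a nice device. Note, however, that the key claim is actually \emph{equivalent} to the theorem: running your reduction with the roles of the periodic element and the ambient element swapped shows that the theorem implies the key claim as well. So this is a reformulation rather than a simplification; the difficulty has been moved, not removed.

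Your case split on the Nielsen--Thurston type of $\beta$ is genuinely different from the paper's, which splits on the type of $\alpha$. Your periodic case is fine once made precise: take $k$ with $\omega_1^k,\omega_2^k$ central in $B_n$, hence equal, and apply Theorem~\ref{thm:artin} in $Z(\beta)\cong A(\arB_d)$ to conclude $\omega_1,\omega_2$ are conjugate there. Your pseudo-Anosov case has a slip --- $Z(\beta)$ is free abelian of rank two, not virtually cyclic --- but the conclusion survives: the periodic elements of $Z(\beta)$ form the rank-one saturation of $\langle\Delta^2\rangle$, and an exponent-sum comparison forces $\omega_1=\omega_2$ outright.

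The genuine gap is the reducible non-periodic case, which you only sketch. Cutting along $\Rext(\beta)$, you would need (i) the exterior braids $\hat\omega_1,\hat\omega_2$ to be conjugate in the smaller braid group, and (ii) after choosing an exterior conjugator $\hat\eta\in Z(\hat\beta)$ by induction, to produce interior pieces so that the assembled $\eta$ both lies in $Z(\beta)$ and conjugates $\omega_1$ to $\omega_2$. Step (i) can be extracted from a rotation-number argument, but step (ii) is where all the content lies: when $\hat\omega_i$ and $\hat\beta$ permute the tubes nontrivially, the individual interior factors of $\omega_i$ need not even be periodic, and the interior constraints couple across orbits in a way your sketch does not address. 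This is precisely where the paper invests its effort (Lemmas~\ref{lem:cent}--\ref{lem:choice}, Corollary~\ref{cor:sta}, and the case analysis inside Proposition~\ref{prop:key}), and the paper's setup carries a structural advantage you have given up: because it holds the \emph{periodic} element fixed in the normal form $\mu_{m,\mathbf d}$, the invariant curve systems are forced into a highly symmetric shape, $Z(\omega)\cong A(\arB_d)$ is available throughout, and the induction runs on $d$ rather than on $n$. In your dual formulation the ambient group $Z(\beta)$ has no comparably uniform description, and it is not clear your outline can be completed without essentially reproducing the paper's machinery from the other side.
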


We prove the above theorem,
relying on the Nielsen-Thurston classification of braids.
In the case of pseudo-Anosov braids, we obtain a stronger result:
if $\alpha,\beta\in Z(\omega)$ are pseudo-Anosov and conjugate in $B_n$,
then any conjugating element from $\alpha$ to $\beta$
belongs to $Z(\omega)$ (see Proposition~\ref{prop:pAconj}).
The case of periodic braids is easy to prove
due to the uniqueness of roots up to conjugacy
for Artin groups of type $\arB$~\cite{LL07},
hence most part of this paper is devoted to the case of reducible braids.

\medskip

Using the characterization of $Z(\omega)$ as the Artin group
of type $\arB$, we obtain the following corollary
which is equivalent to the above theorem
(see Lemma~\ref{lem:equiv} for the equivalence).

\begin{corollary}\label{thm:main2}
For $m\ge 2$ and $d\ge 1$, the monomorphisms
$\psi_5:A(\arB_d)\to A(\arA_{md-1})$ and
$\psi_6:A(\arB_d)\to A(\arA_{md})$
induce injective functions on the set of conjugacy classes.
\end{corollary}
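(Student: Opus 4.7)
The plan is to deduce Corollary~\ref{thm:main2} directly from Theorem~\ref{thm:conj} by unpacking the construction of $\psi_5$ and $\psi_6$ in terms of centralizers of periodic braids, as recorded in Theorem~\ref{thm:cent}. The corollary is asserted to be equivalent to Theorem~\ref{thm:conj}, so the direction needed here is essentially a translation across that identification.

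First I would set $\omega_5 = \delta_{(md)}^d \in B_{md}$ (so $n = md$, $k = d$) and $\omega_6 = \epsilon_{(md+1)}^d \in B_{md+1}$ (so $n = md+1$, $k = d$), and verify that Theorem~\ref{thm:cent} applies in both cases. Since $m \ge 2$ and $d \ge 1$, we have $0 < d < md$, so $d$ is not a multiple of $md$; this handles both the $\delta$ case (where $n = md$) and the $\epsilon$ case (where $n-1 = md$). Moreover, $\gcd(d, md) = d$ in both situations. Theorem~\ref{thm:cent} then supplies isomorphisms $A(\arB_d) \simeq Z(\omega_5)$ and $A(\arB_d) \simeq Z(\omega_6)$, and by construction $\psi_5$ is the composition of the first with the inclusion $Z(\omega_5) \hookrightarrow B_{md} = A(\arA_{md-1})$, while $\psi_6$ is the composition of the second with $Z(\omega_6) \hookrightarrow B_{md+1} = A(\arA_{md})$.

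Next I would verify injectivity on conjugacy classes. Suppose $\alpha, \beta \in A(\arB_d)$ satisfy the condition that $\psi_5(\alpha)$ and $\psi_5(\beta)$ are conjugate in $A(\arA_{md-1})$. Viewing $\alpha$ and $\beta$ as elements of $Z(\omega_5) \subset B_{md}$ through the isomorphism above, they are two elements of the centralizer that are conjugate in the ambient braid group $B_{md}$. Applying Theorem~\ref{thm:conj} with $\omega = \omega_5$ forces them to be conjugate already inside $Z(\omega_5)$, hence inside $A(\arB_d)$. This is exactly the injectivity of $\psi_5$ on conjugacy classes. The argument for $\psi_6$ is verbatim the same, with $\omega_6$ in place of $\omega_5$ and $B_{md+1}$ in place of $B_{md}$.

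Since the corollary is advertised as equivalent to Theorem~\ref{thm:conj}, I do not expect any genuine obstacle in this direction: everything reduces to bookkeeping the identifications provided by Theorem~\ref{thm:cent}. The substantive work — carried out in Theorem~\ref{thm:conj} itself, together with the companion statement Lemma~\ref{lem:equiv} if one wants the full equivalence — is the Nielsen--Thurston case analysis, which the remainder of the paper is devoted to.
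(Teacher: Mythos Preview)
Your proposal is correct and follows essentially the same route as the paper: the corollary is deduced from Theorem~\ref{thm:conj} by identifying $\psi_5$ and $\psi_6$ with the inclusions $Z(\delta_{(md)}^d)\hookrightarrow B_{md}$ and $Z(\epsilon_{(md+1)}^d)\hookrightarrow B_{md+1}$ via Theorem~\ref{thm:cent}, exactly as the paper records in the discussion leading to Lemma~\ref{lem:equiv}. The paper packages this slightly more abstractly (observing that injectivity of $\iota\circ\varphi$ depends only on the conjugacy class of $\omega$), but the substance is identical.
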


The above corollary implies that $\psi_4$ is also injective
on the set of conjugacy classes
because $\psi_5=\nu\circ\psi_4$ and $\psi_6=\psi_1\circ \psi_4$.

\medskip

We also investigate the injectivity for
the well-known monomorphisms of $A(\tilde \arA_{n-1})$
and $A(\tilde \arC_{n-1})$ into $A(\arB_n)$, and a further monomorphism
of $A(\arB_n)$ into $A(\arA_n)$, and have the following.

\begin{theorem}\label{thm:main1}
For $n\ge 3$, none of the monomorphisms
$\psi_1:A(\arB_n)\to A(\arA_n)$,
$\psi_2:A(\tilde \arA_{n-1}) \to A(\arB_n)$ and
$\psi_3:A(\tilde \arC_{n-1}) \to A(\arB_n)$
induces an injective function on the set of conjugacy classes.
\end{theorem}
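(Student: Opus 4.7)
The plan is to produce, for each of the three monomorphisms, a pair of elements of the source Artin group that are not conjugate there but become conjugate in the target. For $\psi_1$ and $\psi_3$ I can use a linking-number invariant; for $\psi_2$ the obvious linking invariant vanishes on both source and target, so a finer centralizer-based obstruction is needed.

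For $\psi_1$, take $\alpha = \sigma_1^2$ and $\beta = \sigma_n^2$: both are pure and hence lie in $A(\arB_n) = B_{n+1,1}$. Iterating the identity $(\sigma_{i+1}\sigma_i)\,\sigma_{i+1}\,(\sigma_{i+1}\sigma_i)^{-1} = \sigma_i$ and squaring shows that $\alpha$ and $\beta$ are conjugate in $A(\arA_n) = B_{n+1}$. But $\lk$ is a homomorphism $B_{n+1,1}\to\mathbb{Z}$ and hence a conjugacy invariant on $A(\arB_n)$, while $\lk(\alpha)=1\neq 0=\lk(\beta)$. For $\psi_3$ I use the analogous invariant $\lk_{n+1}$ on $(n+1)$-pure braids, which is a conjugacy invariant on $A(\tilde\arC_{n-1})=\{1,n+1\}$-pure braids. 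Take $\alpha = \sigma_{n-1}^2$ and $\beta = \sigma_n^2$ (both pure, hence $\{1,n+1\}$-pure); for $n\ge 3$ the braid $\gamma = \sigma_n\sigma_{n-1}$ does not touch strand $1$ and so lies in $A(\arB_n)$, and by the same identity $\gamma\,\beta\,\gamma^{-1} = \alpha$. However $\lk_{n+1}(\alpha)=0\neq 1=\lk_{n+1}(\beta)$, so they are not conjugate in $A(\tilde\arC_{n-1})$.

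For $\psi_2$, set $K = \ker(\lk) = A(\tilde\arA_{n-1})$, and take $\alpha = (\sigma_2\sigma_3\cdots\sigma_n)^n = \Delta^2_{\{2,\ldots,n+1\}}$, the full twist on strands $2,\ldots,n+1$, together with $\beta = \sigma_1^2\,\alpha\,\sigma_1^{-2}$. Both are pure with support disjoint from strand $1$, so they are $1$-unlinked and lie in $K$, and they are obviously conjugate in $A(\arB_n)$ via $\sigma_1^2$. If some $\gamma\in K$ also conjugated $\alpha$ to $\beta$, then $\sigma_1^{-2}\gamma\in Z_{B_{n+1,1}}(\alpha)$, so it suffices to prove that $\lk\bigl(Z_{B_{n+1,1}}(\alpha)\bigr)\subseteq n\mathbb{Z}$: this forces $\lk(\sigma_1^{-2}\gamma) = -1\in n\mathbb{Z}$, which is impossible for $n\ge 3$.

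The centralizer estimate is the main obstacle. Because $\alpha$ is reducible with canonical reduction curve $C$ enclosing punctures $2,\ldots,n+1$, every element of $Z_{B_{n+1}}(\alpha)$ preserves $[C]$ and so fixes strand $1$, giving $Z_{B_{n+1}}(\alpha)\subseteq B_{n+1,1}$. A Fenchel--Nielsen-type splitting along $C$ then presents the centralizer as generated by the inside braid group $B_n^{(2)} = \langle\sigma_2,\ldots,\sigma_n\rangle$ (whose elements all have $\lk = 0$, since $\alpha$ is central in $B_n^{(2)}$) together with the $1$-pure outside contributions --- powers of the full twist of strand $1$ around the bundle, which equals $\Delta^2_{B_{n+1}}\cdot\alpha^{-1}$ in $B_{n+1}$ and has $\lk = n$. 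Granting this, every $1$-pure centralizing element lies in $B_n^{(2)}\cdot\langle\Delta^2_{B_{n+1}}\rangle$ and has $\lk\in n\mathbb{Z}$. Making this splitting precise --- in particular, ruling out exotic $1$-pure centralizing elements with $\lk\notin n\mathbb{Z}$ --- is where the careful work will lie; once it is in hand, the remaining argument is immediate.
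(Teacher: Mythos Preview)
Your proof is correct and follows the same template as the paper's: for $\psi_1$ and $\psi_3$ you use the linking-number invariant exactly as the paper does, just with slightly simpler witness pairs ($\sigma_1^2$ versus $\sigma_n^2$, and $\sigma_{n-1}^2$ versus $\sigma_n^2$) in place of the paper's $\sigma_1^2\sigma_2^4$ versus $\sigma_2^2\sigma_1^4$ and $\sigma_n^2\sigma_{n-1}^4$ versus $\sigma_{n-1}^2\sigma_n^4$.

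For $\psi_2$ the architecture is again identical to the paper's --- take an element $\alpha$ supported on strands $2,\ldots,n+1$, set $\beta=\sigma_1^{\pm 2}\alpha\sigma_1^{\mp 2}$, and show $\lk\bigl(Z_{B_{n+1}}(\alpha)\bigr)\subseteq n\mathbb{Z}$ --- but your choice of $\alpha$ differs. The paper takes $\alpha_3=\sigma_n\cdots\sigma_2=(1\oplus\delta_{(n)})_{(1,n)}$, whose centralizer is the rank-two abelian group $\langle\alpha_3,\chi\rangle$ (computed via the result of Bessis--Digne--Michel that $Z_{B_n}(\delta_{(n)})=\langle\delta_{(n)}\rangle$). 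You instead take the $n$-th power $\alpha=\alpha_3^{\,n}=(1\oplus\Delta_{(n)}^2)_{(1,n)}$, a Dehn twist about the curve $C$ enclosing punctures $2,\ldots,n+1$; its centralizer is the full curve stabilizer $\{\gamma:\gamma*C=C\}=\langle\chi\rangle\cdot B_n^{(2)}$, which is larger but whose description is a standard mapping-class-group fact rather than a citation. Your identification $\chi=\Delta_{(n+1)}^2\alpha^{-1}$ with $\lk(\chi)=n$ is correct, and since every element of $B_n^{(2)}$ has $\lk=0$, the inclusion $\lk\bigl(Z(\alpha)\bigr)\subseteq n\mathbb{Z}$ follows immediately --- so the ``careful work'' you flag is in fact routine once one knows that the centralizer of a Dehn twist is the stabilizer of its curve. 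Both arguments yield the same contradiction $\lk(\sigma_1^{\pm 2}\gamma)=\pm 1\notin n\mathbb{Z}$.
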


For the proof of the above theorem, we present three examples
which show that the monomorphisms $\psi_1$, $\psi_2$ and $\psi_3$
are not injective on the set of conjugacy classes.
For instance, the first example is a pair of elements in $B_{n+1,1}$
that are conjugate in $B_{n+1}$, but not conjugate in $B_{n+1,1}$.

\medskip

We close this section with a remark on a monomorphism
from $A(\arA_{n-1})$ to $A(\arD_n)$.
The Coxeter graph of type $\arD_n$ is as follows.

\smallskip
\begin{center}\begin{tabular}{cc}
\raisebox{.6em}{$\arD_n$} &
\includegraphics[scale=.5]{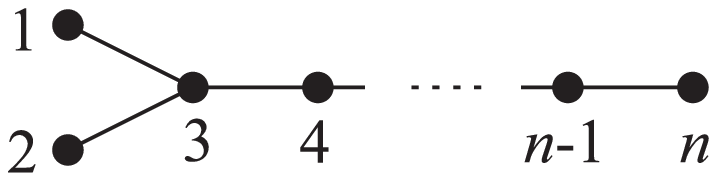}
\end{tabular}
\end{center}
\smallskip

\noindent
Let $\sigma_1,\ldots,\sigma_{n-1}$ and $t_1,\ldots,t_n$ be
the standard generators of $A(\arA_{n-1})$ and $A(\arD_n)$.
Let $\iota: A(\arA_{n-1})\to A(\arD_n)$ and $\pi:A(\arD_n)\to A(\arA_{n-1})$
be the homomorphisms defined by
$\iota(\sigma_i)=t_{i+1}$ for $1\le i\le n-1$;
$\pi(t_1)=\sigma_1$; $\pi(t_i)=\sigma_{i-1}$ for $i\ge 2$.
Because $\pi\circ\iota$ is the identity,
$\pi$ is an epimorphism and
$\iota$ is a monomorphism,
further, $\iota$ is
injective on the set of conjugacy classes.
It is known by J.~Crisp and L.~Paris
that $A(\arD_n)$ is isomorphic
to a semidirect product $F_{n-1}\rtimes A(\arA_{n-1})$,
where $F_{n-1}$ is a free group of rank $n-1$~\cite{CP05}.

\section{Preliminaries}
In this section, we review the uniqueness of roots up to
conjugacy in some Artin groups, and some tools useful in handling reducible braids
such as canonical reduction system and standard reduction system.

\subsection{Periodic braids}
The center of $B_n$ is the infinite cyclic group generated by $\Delta^2$.
An $n$-braid is periodic if and only if it is of finite order
in the central quotient $B_n/\langle \Delta^2\rangle$.
Note that $\delta$ generates a cyclic group of order $n$
in $B_n/\langle \Delta^2\rangle$,
hence $\delta^a$ and $\delta^b$ generate
the same group in $B_n/\langle \Delta^2\rangle$
if and only if $\gcd(a,n)=\gcd(b,n)$.
In fact, we have the following.

\begin{lemma}\label{lem:order}
Let $a$ and $b$ be integers.
In $B_n$, the following hold.
\begin{itemize}
\item[(i)]
$Z(\delta^a)=Z(\delta^b)$ if and only if\/ $\gcd(a,n)=\gcd(b,n)$.
\item[(ii)]
$Z(\epsilon^a)=Z(\epsilon^b)$ if and only if\/ $\gcd(a,n-1)=\gcd(b,n-1)$.
\item[(iii)]
$Z(\delta^a)=Z(\delta^{\gcd(a,n)})$ and
$Z(\epsilon^a)=Z(\epsilon^{\gcd(a,n-1)})$.
\end{itemize}
\end{lemma}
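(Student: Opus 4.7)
The plan is to derive (iii) first by a Bezout argument, and then obtain (i) and (ii) from it: the ``if'' directions follow immediately from (iii), while the ``only if'' directions will require Theorem~\ref{thm:cent}. For (iii), set $d=\gcd(a,n)$ and choose integers $p,q$ with $ap+nq=d$. Since $\delta^n=\Delta^2$ is central in $B_n$, one has
$$\delta^d=(\delta^a)^p\,\Delta^{2q},$$
so $\delta^d$ lies in the subgroup generated by $\delta^a$ and the center, and hence is centralized by every element of $Z(\delta^a)$. This gives $Z(\delta^a)\subseteq Z(\delta^d)$; the reverse inclusion is immediate from $\delta^a=(\delta^d)^{a/d}$. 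The $\epsilon$-case follows by the same argument, using $\epsilon^{n-1}=\Delta^2$ in place of $\delta^n=\Delta^2$.

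For the ``if'' direction of (i), if $\gcd(a,n)=\gcd(b,n)=d$ then (iii) yields $Z(\delta^a)=Z(\delta^d)=Z(\delta^b)$. For the converse, suppose $Z(\delta^a)=Z(\delta^b)$. If one of $a,b$ is a multiple of $n$, the corresponding power of $\delta$ lies in $\langle\Delta^2\rangle$ so its centralizer is all of $B_n$; this forces the other exponent also to be a multiple of $n$, and both gcds equal $n$. Otherwise, Theorem~\ref{thm:cent} identifies $Z(\delta^a)$ with $A(\arB_{\gcd(a,n)})$ and $Z(\delta^b)$ with $A(\arB_{\gcd(b,n)})$; since they are equal as subgroups of $B_n$, the two Artin groups are abstractly isomorphic. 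The groups $A(\arB_d)$ being pairwise non-isomorphic for distinct $d$ (distinguishable, e.g., by their cohomological dimensions, which equal $d$), we conclude $\gcd(a,n)=\gcd(b,n)$. Statement (ii) is proved identically with $\delta$ and $n$ replaced by $\epsilon$ and $n-1$.

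The only real obstacle is the ``only if'' direction, which invokes Theorem~\ref{thm:cent} together with the pairwise non-isomorphism of the groups $A(\arB_d)$. Everything else reduces to an elementary Bezout manipulation together with the centrality of $\Delta^2$, and the substantive content of the lemma is simply that $Z(\delta^a)$ depends only on the cyclic subgroup of $B_n/\langle\Delta^2\rangle$ generated by the image of $\delta^a$, which is in turn controlled by $\gcd(a,n)$.
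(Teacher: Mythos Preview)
Your proof is correct. The paper itself does not give a proof of this lemma: it only remarks that (iii) is a direct consequence of (i) and (ii), and the sentence preceding the lemma (``$\delta^a$ and $\delta^b$ generate the same group in $B_n/\langle\Delta^2\rangle$ if and only if $\gcd(a,n)=\gcd(b,n)$'') is the intended justification for the ``if'' directions. You reverse the logical order---proving (iii) first by Bezout and then deriving the ``if'' halves of (i) and (ii)---which is cleaner and makes the dependence on the centrality of $\Delta^2$ explicit.

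Your argument for the ``only if'' direction is the part the paper leaves entirely implicit, and your route through Theorem~\ref{thm:cent} is valid: $A(\arB_d)$ and $A(\arB_{d'})$ are indeed non-isomorphic for $d\neq d'$, since both are finite-type Artin groups with $K(\pi,1)$'s of dimension equal to their rank (Deligne) and each contains a free abelian subgroup of that rank, so the cohomological dimension distinguishes them. This is correct but somewhat heavy machinery for the purpose. A lighter alternative, more in the spirit of the paper, is to look at induced permutations: if $d=\gcd(a,n)$ is a proper divisor of $n$, then $\pi_{\delta^d}$ is a product of $d$ disjoint $(n/d)$-cycles, and (as in Lemma~\ref{lem:per}) the image of $Z(\delta^d)$ in $S_n$ is the full centralizer of $\pi_{\delta^d}$, a group of order $(n/d)^d\, d!$; these orders are distinct for distinct divisors $d$ of $n$, so the centralizers in $B_n$ cannot coincide. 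Either way, your handling of the degenerate case where one exponent is a multiple of $n$ (forcing $\delta^b$ central, hence $n\mid b$ by torsion-freeness) is fine.
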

In the above, (iii) is a direct consequence of (i) and (ii).

The central quotient $B_n/\langle \Delta^2\rangle$ is
the group of isotopy classes of
self-diffeomorphisms of $D_n$ without the condition of
fixing $\partial D_n$ pointwise on self-diffeomorphisms and isotopies.
If a diffeomorphism $f:D_n\to D_n$ does not fix
$\partial D_n$ pointwise,
it determines a braid modulo $\Delta^2$.

\subsection{Uniqueness of roots up to conjugacy in some Artin groups }

It is known that the $k$-th roots of a braid are unique up to conjugacy
by J.~Gonz\'alez-Meneses~\cite{Gon03}, and this is
generalized to the Artin groups $A(\arB_n)= A(\arC_n)$, $A(\tilde \arA_n)$
and $A(\tilde \arC_n)$~\cite{LL07}.

\begin{theorem}[J.~Gonz\'alez-Meneses~\cite{Gon03}]
\label{thm:gon}
Let $\alpha$ and $\beta$ be $n$-braids such that $\alpha^k=\beta^k$
for some nonzero integer $k$. Then $\alpha$ and $\beta$ are conjugate
in $B_n$.
\end{theorem}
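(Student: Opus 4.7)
The plan is to proceed by the Nielsen--Thurston trichotomy applied to the common power $\omega := \alpha^k = \beta^k$. Since passing to a nonzero power preserves Nielsen--Thurston type, $\alpha$ and $\beta$ must share the type of $\omega$; moreover both lie in $Z(\omega)$. The strategy is to handle the periodic and pseudo-Anosov cases essentially by rigidity, and then to bootstrap from these into the reducible case via the canonical reduction system.

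In the periodic case, the Brouwer--de K\'er\'ekj\'art\'o--Eilenberg theorem already recalled in the excerpt says that $\omega$ is conjugate to a power of $\delta$ or of $\epsilon$, and the same holds for $\alpha$ and $\beta$. After conjugating so that $\omega$ is a standard power of $\delta$ or $\epsilon$, the periodic $k$-th roots of $\omega$ can be enumerated directly from the rigid-rotation description of Figure~\ref{fig:circ}: each root must itself be a rigid rotation (in the central quotient $B_n/\langle\Delta^2\rangle$) whose $k$-th iterate agrees with $\omega$ modulo center, and any two such rotations are conjugate via a relabelling of the punctures.

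In the pseudo-Anosov case, $\omega$ carries a unique pair of invariant transverse measured foliations $(F^s,\mu^s)$ and $(F^u,\mu^u)$ with a dilatation $\lambda>1$. Both $\alpha$ and $\beta$ are then pseudo-Anosov with dilatation $\lambda^{1/k}$, and, since any pseudo-Anosov map commuting with $\omega$ must preserve its unmeasured foliations, the invariant foliations of $\alpha$ and $\beta$ coincide up to rescaling with those of $\omega$. The standard rigidity for pseudo-Anosov maps with prescribed invariant measured foliations then yields a conjugacy $\alpha \sim \beta$ in $B_n$.

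The main obstacle, as one expects, is the reducible case. Here I would use that the canonical reduction system satisfies $\R(\omega)=\R(\alpha)=\R(\beta)$ for a nonzero power, so that after a single conjugation one may assume $\alpha$ and $\beta$ preserve a common standard system of round curves in $D_n$. Cutting along this system decomposes each of $\alpha$ and $\beta$ into an external braid permuting the reduction circles and a family of internal braids, one per component of the complement; the external braid of $\alpha$ and of $\beta$ are both $k$-th roots of the external braid of $\omega$, and similarly on each internal component (after reindexing along the permutation orbits). An induction on braid index reduces each piece to the periodic or pseudo-Anosov cases already handled. The delicate step -- and the one where I expect the work to lie -- is the reassembly: the piecewise conjugacies coming from the inductive hypothesis must be glued into a single element of $B_n$, which requires choosing the internal conjugators compatibly along each orbit of the external permutation and correcting the tubular framings (the linking-number contributions around each reduction curve) so that the resulting braid genuinely conjugates $\alpha$ to $\beta$ rather than only matching them piece by piece.
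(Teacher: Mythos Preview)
The paper does not prove this statement at all: Theorem~\ref{thm:gon} is quoted from Gonz\'alez-Meneses~\cite{Gon03} as an external result and then used (via Corollary~\ref{cor:unique} and in the proof of Proposition~\ref{prop:key}). There is therefore no ``paper's own proof'' to compare your proposal against.

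That said, your outline is broadly the strategy of~\cite{Gon03}, so a couple of comments may be useful. In the pseudo-Anosov case you can do better than ``conjugate'': since $\alpha,\beta\in Z(\omega)$ and the centralizer of a pseudo-Anosov braid is free abelian of rank two (this is exactly the result of Gonz\'alez-Meneses and Wiest invoked in the paper's Proposition~\ref{prop:pAconj}), $\alpha$ and $\beta$ commute, hence $(\alpha\beta^{-1})^k=1$, hence $\alpha=\beta$ by torsion-freeness. Your foliation argument is heading in the right direction but is both more work and yields a weaker conclusion. In the reducible case your description is accurate as a plan, and you are right that the reassembly of the piecewise conjugators along the orbits of the exterior permutation is where the genuine difficulty lies; in~\cite{Gon03} this is handled by a careful induction together with the multiset-of-conjugacy-classes invariant for the tubular components (the same device the present paper cites as \cite[Proposition~3.2]{Gon03} in Claim~4 of \S\ref{sec:conj}).
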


\begin{theorem}[E.-K.~Lee and S.-J.~Lee~\cite{LL07}]\label{thm:artin}
Let\/ $G$ denote one of the Artin groups of finite type
$\mathbf A_n$, $\mathbf B_n=\mathbf C_n$ and
affine type $\tilde {\mathbf A}_{n-1}$, $\tilde {\mathbf C}_{n-1}$.
If $\alpha,\beta\in G$ are such that $\alpha^k=\beta^k$
for some nonzero integer $k$,
then $\alpha$ and $\beta$ are conjugate in $G$.
\end{theorem}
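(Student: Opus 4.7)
The plan is to leverage Gonz\'alez--Meneses's root uniqueness (Theorem~\ref{thm:gon}) in $B_{n+1}$ and then upgrade the conjugator it provides so that it lies in the subgroup $G$ itself. For $G=A(\arA_n)\simeq B_{n+1}$ the conclusion is immediate from Theorem~\ref{thm:gon}, so I would fix one of the three remaining types and identify $G$ with the corresponding subgroup of $B_{n+1}$ from the introduction (consisting of $1$-pure, $1$-unlinked, or $\{1,n+1\}$-pure braids). Applying Theorem~\ref{thm:gon} yields some $\gamma\in B_{n+1}$ with $\gamma\alpha\gamma^{-1}=\beta$, and since $\gamma\alpha^k\gamma^{-1}=\beta^k=\alpha^k$, automatically $\gamma\in Z_{B_{n+1}}(\omega)$ where $\omega=\alpha^k$. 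The complete set of conjugators from $\alpha$ to $\beta$ is the coset $\gamma\,Z_{B_{n+1}}(\alpha)$, so the problem reduces to showing that this coset meets $G$.

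Next I would split into cases according to the Nielsen--Thurston type of $\omega$ (and hence of $\alpha$). When $\omega$ is periodic, Theorem~\ref{thm:cent} identifies $Z_{B_{n+1}}(\omega)$ with an Artin group of type $\arB$, inside which $\alpha$ and $\beta$ both live and satisfy $\alpha^k=\beta^k$; combined with the explicit description of the inclusion $Z_{B_{n+1}}(\omega)\hookrightarrow B_{n+1}$ one reduces to a lower-dimensional instance of the theorem. When $\omega$ is pseudo-Anosov, $\alpha$ and $\beta$ share the invariant foliations of $\omega$ and $Z_{B_{n+1}}(\omega)$ is virtually cyclic, so the coset $\gamma\,Z_{B_{n+1}}(\alpha)$ contains essentially a unique candidate, which can be verified directly to lie in $G$. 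When $\omega$ is reducible but not periodic, I would cut along the canonical reduction system of $\omega$ (preserved by $\alpha$, $\beta$, and $\gamma$), induct on complexity by matching the component pieces of $\alpha$ and $\beta$, and reassemble a conjugator in $G$ from the resulting component-wise conjugators.

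The main obstacle will be the reducible case, where careful bookkeeping across the reduction system is required to ensure that the reassembled conjugator preserves the defining property of $G$: fixing the first strand for $A(\arB_n)$, fixing strands $1$ and $n+1$ for $A(\tilde\arC_{n-1})$, and in addition having trivial linking $\lk$ with the first strand for $A(\tilde\arA_{n-1})$. A secondary difficulty is the limiting periodic case in which $\omega$ is central, so that $Z_{B_{n+1}}(\omega)=B_{n+1}$ gives no immediate restriction on $\gamma$; here I expect to fall back on root uniqueness inside the relevant Artin subgroup together with Lemma~\ref{lem:order} to isolate the pure strand structure.
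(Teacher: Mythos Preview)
This theorem is not proved in the present paper: it is quoted from \cite{LL07} and used as a black box (see the block immediately following Theorem~\ref{thm:gon}). So there is no ``paper's own proof'' to compare against here.

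As for your outline itself, the broad strategy---lift to $B_{n+1}$, apply Theorem~\ref{thm:gon}, then try to push the conjugator down into $G$ via a Nielsen--Thurston case analysis---is natural and in the spirit of \cite{LL07}, but two of your cases are not yet under control. First, for pseudo-Anosov $\omega$ the centralizer $Z_{B_{n+1}}(\omega)$ is \emph{not} virtually cyclic: by \cite{GW04} it is free abelian of rank two, so the coset $\gamma\,Z_{B_{n+1}}(\alpha)$ is a full rank-two affine lattice, and ``essentially a unique candidate'' is not accurate. This is likely repairable (since $\Delta^2\in G$ you can quotient by the center and work in a genuinely cyclic group), but it needs to be said. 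Second, and more seriously, when $\omega=\alpha^k$ is central---equivalently, when $\alpha$ is periodic---your proposed fallback ``root uniqueness inside the relevant Artin subgroup'' is exactly the statement you are trying to prove, so as written this step is circular. You would instead need a direct classification of periodic elements of $G$ up to $G$-conjugacy, which is a genuine piece of work (and for $G=A(\tilde\arA_{n-1})$ or $A(\tilde\arC_{n-1})$ is not covered by Theorem~\ref{thm:cent} alone, since $Z_{B_{n+1}}(\omega)$ need not sit inside $G$).
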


\begin{corollary}\label{cor:unique}
Let\/ $G$ denote one of the Artin groups of finite type
$\mathbf A_n$, $\mathbf B_n=\mathbf C_n$ and
affine type $\tilde {\mathbf A}_{n-1}$, $\tilde {\mathbf C}_{n-1}$.
Let $\alpha$ and $\beta$ be elements of\/ $G$,
and let $k$ be a nonzero integer.
Then, $\alpha$ and $\beta$ are conjugate in $G$ if and only if
so are $\alpha^k$ and $\beta^k$.
\end{corollary}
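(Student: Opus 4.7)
The plan is to derive Corollary \ref{cor:unique} directly from Theorem \ref{thm:artin} by handling the two implications separately, with only the reverse direction requiring any real input.

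For the forward direction, I will assume $\alpha$ and $\beta$ are conjugate in $G$ and pick $\gamma \in G$ with $\beta = \gamma \alpha \gamma^{-1}$. Then raising to the $k$-th power and telescoping $\gamma^{-1}\gamma$ cancellations gives $\beta^k = \gamma \alpha^k \gamma^{-1}$, so $\alpha^k$ and $\beta^k$ are conjugate via the same $\gamma$. This direction does not use Theorem \ref{thm:artin} and holds in any group, so I would dispatch it in one line.

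For the reverse direction, I would suppose $\alpha^k$ and $\beta^k$ are conjugate in $G$, choose $\gamma \in G$ with $\beta^k = \gamma \alpha^k \gamma^{-1}$, and rewrite the right-hand side as $(\gamma \alpha \gamma^{-1})^k$. Setting $\alpha' = \gamma \alpha \gamma^{-1}$, we then have $\beta^k = (\alpha')^k$ with $\alpha', \beta \in G$. Now Theorem \ref{thm:artin} applies and yields that $\alpha'$ and $\beta$ are conjugate in $G$. Since $\alpha$ and $\alpha'$ are conjugate in $G$ by construction, transitivity of the conjugacy relation finishes the argument.

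There is no real obstacle here: the corollary is a formal consequence of the root-uniqueness-up-to-conjugacy theorem. The only point worth emphasizing in the write-up is that the conjugating element $\gamma$ producing $(\alpha')^k = \beta^k$ is required to lie in $G$ itself, which is automatic from the hypothesis that $\alpha^k$ and $\beta^k$ are conjugate \emph{in $G$}; this is exactly what allows Theorem \ref{thm:artin} to be invoked inside $G$ rather than in some ambient group.
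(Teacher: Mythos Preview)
Your argument is correct and is precisely the standard derivation the paper has in mind: the corollary is stated immediately after Theorem~\ref{thm:artin} without an explicit proof because it follows exactly as you wrote, by conjugating one root and invoking root uniqueness. There is nothing to add.
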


\subsection{Canonical reduction system of reducible braids}
Abusing notation, we use the same symbol for a curve and its isotopy class.
Hence, for curves $C_1$ and $C_2$,
$C_1=C_2$ means that $C_1$ and $C_2$ are isotopic,
unless stated otherwise explicitly.

A curve system $\C$ in $D_n$ means a finite collection
of disjoint simple closed curves in $D_n$.
It is said to be \emph{essential}\/
if each component is homotopic neither to a point
nor to a puncture nor to the boundary.
For an $n$-braid $\alpha$ and a curve system $\C$ in $D_n$,
let $\alpha*\C$ denote the left action of $\alpha$ on $\C$.
Recall that an $n$-braid $\alpha$ is reducible
if $\alpha*\C=\C$ for some essential curve system $\C$ in $D_n$,
called a \emph{reduction system} of $\alpha$.

For a reduction system $\C$ of an $n$-braid $\alpha$,
let $D_\C$ be the closure of $D_n\setminus N(\C)$ in
$D_n$, where $N(\C)$ is a regular neighborhood of $\C$.
The restriction of $\alpha$ induces a self-diffeomorphism on $D_\C$ that is
well defined up to isotopy. Due to J.~Birman, A.~Lubotzky and
J.~McCarthy~\cite{BLM83} and N.~V.~Ivanov~\cite{Iva92},
for any $n$-braid $\alpha$,
there is a unique
\emph{canonical reduction system} $\R(\alpha)$ with the following
properties.
\begin{enumerate}
\item[(i)]
$\R(\alpha^m)=\R(\alpha)$ for all $m\ne 0$.

\item[(ii)]
$\R(\beta\alpha\beta^{-1})=\beta*\R(\alpha)$ for all $\beta\in B_n$.

\item[(iii)]
The restriction of $\alpha$ to each component of $D_{\R(\alpha)}$ is
either periodic or pseudo-Anosov. A reduction system with this
property is said to be \emph{adequate}.

\item[(iv)]
If $\C$ is an adequate reduction system of $\alpha$,
then $\R(\alpha)\subset\C$.
\end{enumerate}

Note that a braid $\alpha$ is non-periodic and reducible if and only if
$\R(\alpha)\ne\emptyset$.
Let $\Rext(\alpha)$ denote the collection
of outermost components of $\R(\alpha)$.
Then $\Rext(\alpha)$ satisfies the properties (i) and (ii).

\subsection{Reducible braids with a standard reduction system}

Here we introduce some notions in~\cite{LL08}
that will be used in computations involving reducible braids.

\begin{definition}
An essential curve system in $D_n$ is said to be \emph{standard}\/
if each component is isotopic to a round circle
centered at the real axis as in Figure~\ref{fig:standard}~(a),
and \emph{unnested} if none of its components
encloses another component as in Figure~\ref{fig:standard}~(b).
Two curve systems $\C_1$ and $\C_2$ in $D_n$
are said to be \emph{of the same type}
if there is a diffeomorphism
$f:D_n\to D_n$ such that $f(\C_1)$ is isotopic to $\C_2$.
\end{definition}

\begin{figure}
\begin{tabular}{ccccc}
\includegraphics[scale=.5]{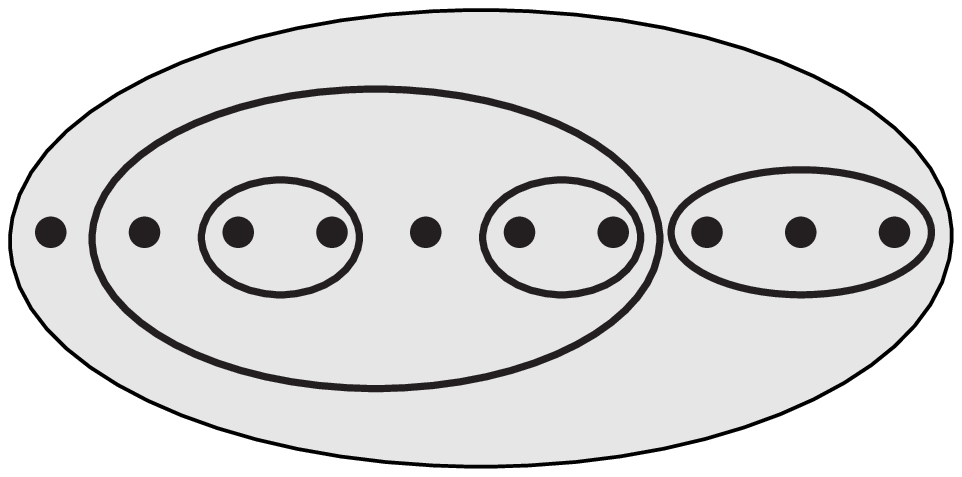}
&& \includegraphics[scale=.5]{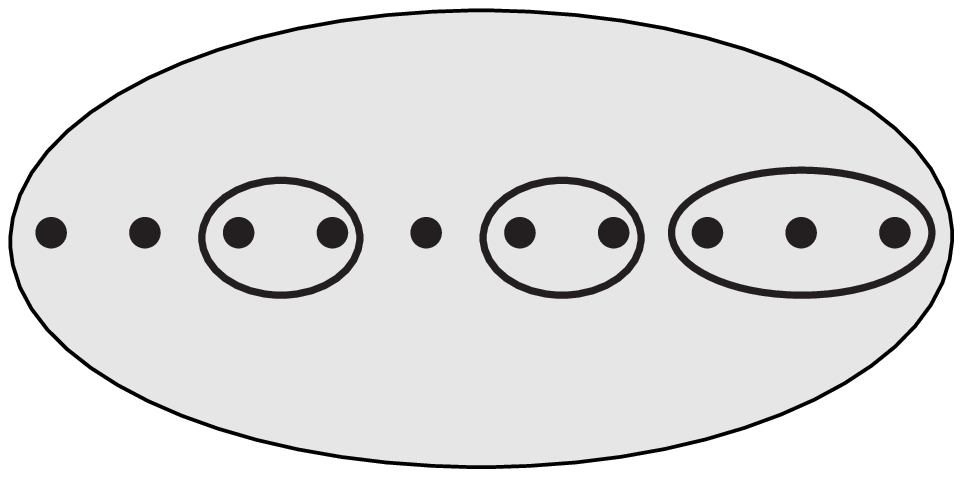}\\
(a) a standard curve system&&
(b) the unnested standard curve system $\C_{(1,1,2,1,2,3)}$
\end{tabular}
\vskip -2mm
\caption{Some standard curve systems in $D_{10}$}\label{fig:standard}
\end{figure}

$\Rext(\alpha)$ is unnested for any non-periodic reducible braid $\alpha$.
If $\alpha$ and $\beta$ are conjugate,
$\Rext(\alpha)$ and $\Rext(\beta)$ are of the same type.

\medskip

Recall that an ordered $k$-tuple $\n=(n_1,\ldots,n_k)$
is a $k$-composition of $n$ if $n=n_1+\cdots+n_k$ and
$n_i\ge 1$ for each $i$.
Unnested standard curve systems in $D_n$ are
in one-to-one correspondence with $k$-compositions of $n$
for $2\le k\le n-1$.
The $k$-braid group $B_k$ acts on the set of $k$-compositions as
$$
\alpha*(n_1,\ldots,n_k)
=(n_{\theta^{-1}(1)},\ldots,n_{\theta^{-1}(k)}),
$$
where $\alpha\in B_k$ and $\theta=\pi_\alpha$.

\begin{definition}\label{def:CurSys}
For a composition $\n=(n_1,\ldots,n_k)$ of $n$,
let $\C_\n$ denote the curve system $\cup_{n_i\ge 2}C_i$
in $D_n$, where $C_i$ with $n_i\ge 2$ is
a round circle enclosing the punctures
$\{m: \sum_{j=1}^{i-1}n_j< m\le \sum_{j=1}^{i}n_j\}$.
\end{definition}

For example, Figure~\ref{fig:standard}~(b)
shows $\C_\n$ for  $\n=(1,1,2,1,2,3)$.
For compositions $\n_1$ and $\n_2$ of $n$,
$\C_{\n_1}$ and $\C_{\n_2}$ are of the same type if and only if
$\n_1$ and $\n_2$ induce the same partition of $n$
such as $\n_1=(2,2,1)$ and $\n_2=(2,1,2)$.

\begin{definition}
Let $\n=(n_1,\cdots,n_k)$ be a composition of $n$.
\begin{itemize}
\item[(i)]
Let $\hat\alpha=l_1\cup\cdots\cup l_k$ be a $k$-braid,
where the right endpoint of $l_i$
is at the $i$-th position from bottom.
We define $\myangle{\hat\alpha}_\n$ as the $n$-braid obtained
from $\hat\alpha$ by taking $n_i$ parallel copies of $l_i$ for each $i$.
See Figure~\ref{fig:copy}~(a).

\item[(ii)]
Let $\alpha_i\in B_{n_i}$ for $i=1,\ldots,k$.
We define $(\alpha_1\oplus\cdots\oplus\alpha_k)_\n$ as the $n$-braid
$\alpha_1'\alpha_2'\cdots\alpha_k'$, where each $\alpha_i'$
is the image of $\alpha_i$ under the homomorphism
$B_{n_i}\to B_n$ defined by $\sigma_j\mapsto\sigma_{(n_1+\cdots+n_{i-1})+j}$.
See Figure~\ref{fig:copy}~(b).
\end{itemize}
\end{definition}

We will use the notation
$\alpha=\myangle{\hat\alpha}_\n(\alpha_1\oplus\cdots\oplus\alpha_k)_\n$
frequently.
See Figure~\ref{fig:copy}~(c).
Let $\bigoplus_{i=1}^m\alpha_i$ denote $\alpha_1\oplus\cdots\oplus\alpha_m$,
and let $m\alpha$ denote $\bigoplus_{i=1}^m\alpha$.
For example,
$$\arraycolsep=1pt
\begin{array}{rcl}
(\alpha_0\oplus {\textstyle\bigoplus_{i=1}^d}
(\alpha_{i,1}\oplus\cdots\oplus\alpha_{i,m}) )_\n
&=& (\alpha_0\oplus
(\alpha_{1,1}\oplus\cdots\oplus\alpha_{1,m})\oplus\cdots\oplus
(\alpha_{d,1}\oplus\cdots\oplus\alpha_{d,m}) )_\n,\\
(\alpha_0\oplus m\alpha_1\oplus
\cdots\oplus m\alpha_r)_\n
&=&(\alpha_0\oplus
(\underbrace{\alpha_1\oplus\cdots\oplus\alpha_1}_m )\oplus\cdots\oplus
(\underbrace{\alpha_r\oplus\cdots\oplus\alpha_r}_m ))_\n.
\end{array}
$$

\begin{figure}
\begin{tabular}{ccccc}
\includegraphics[scale=.8]{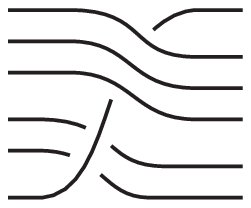} &\qquad&
\includegraphics[scale=.8]{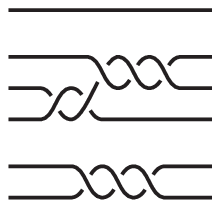} &\qquad&
\includegraphics[scale=.8]{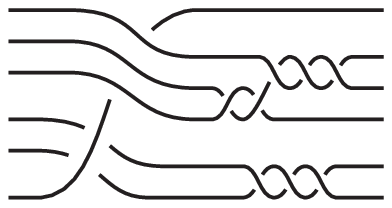} \\
\small (a) $\myangle{\sigma_1^{-1}\sigma_2}_\n$&&
\small (b) $(\sigma_1^3\oplus\sigma_1^{-2}\sigma_2^3\oplus 1)_\n$&&
\small (c) $\myangle{\sigma_1^{-1}
\sigma_2}_\n(\sigma_1^3\oplus\sigma_1^{-2}\sigma_2^3\oplus 1)_\n$
\end{tabular}
\caption{$\n=(2,3,1)$}\label{fig:copy}
\end{figure}

\begin{lemma}[{\cite[Lemmas 3.5 and 3.6]{LL08}}]\label{lem:decom}
Let\/ $\n=(n_1,\ldots,n_k)$ be a composition of\/ $n$.
\begin{enumerate}
\item[(i)]
The expression $\alpha=\myangle{\hat\alpha}_\n(\alpha_1\oplus\cdots\oplus\alpha_k)_\n$
is unique, that is, if\/
$\myangle{\hat\alpha}_\n(\alpha_1\oplus\cdots\oplus\alpha_k)_\n
=\myangle{\hat\beta}_\n(\beta_1\oplus\cdots\oplus\beta_k)_\n$,
then $\hat\alpha =\hat\beta$ and $\alpha_i=\beta_i$
for $1\le i\le k$.

\item[(ii)]
For $\alpha\in B_n$, $\alpha*\C_\n$ is standard if and only if\/
$\alpha$ can be expressed as
$\alpha=\myangle{\hat\alpha}_\n(\alpha_1\oplus\cdots\oplus\alpha_k)_\n$.
In this case, $\alpha*\C_\n=\C_{\hat\alpha*\n}$.

\item[(iii)]
$\myangle{\hat\alpha}_\n(\alpha_1\oplus\cdots\oplus\alpha_k)_\n
= (\alpha_{\theta^{-1}(1)}\oplus\cdots\oplus\alpha_{\theta^{-1}(k)})_{\hat\alpha\ast\n}
\myangle{\hat\alpha}_\n$,
where $\theta=\pi_{\hat\alpha}$.

\item[(iv)]
$\myangle{\hat\alpha \hat\beta}_\n
=\myangle{\hat\alpha}_{\hat\beta*\n}\myangle{\hat\beta}_\n$.

\item[(v)]
$(\myangle{\hat\alpha}_\n)^{-1}=\myangle{\hat\alpha^{-1}}_{\hat\alpha*\n}$.

\item[(vi)]
$
(\alpha_1\beta_1\oplus\cdots\oplus\alpha_k\beta_k)_\n
=(\alpha_1\oplus\cdots\oplus\alpha_k)_\n
(\beta_1\oplus\cdots\oplus\beta_k)_\n
$

\item[(vii)]
$(\alpha_1\oplus\cdots\oplus\alpha_k)_\n^{-1}
=(\alpha_1^{-1}\oplus\cdots\oplus\alpha_k^{-1})_\n$.

\item[(viii)]
$\myangle{\hat\alpha}_\n(\alpha_1\oplus\cdots\oplus\alpha_k)_\n
= \Delta_{(n)}$ if and only if\/
$\hat\alpha=\Delta_{(k)}$ and $\alpha_i=\Delta_{(n_i)}$ for $1\le i\le k$.
\end{enumerate}
\end{lemma}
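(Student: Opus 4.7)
The plan is to prove the eight parts in the order $(ii)\Rightarrow(i)\Rightarrow(iii)$--$(vii)\Rightarrow(viii)$, since (ii) is the fundamental structure theorem that says the two constructions $\myangle{\cdot}_\n$ and $(\cdot\oplus\cdots\oplus\cdot)_\n$ together generate precisely the braids that send $\C_\n$ to a standard (unnested) curve system, and essentially every other assertion is bookkeeping once (ii) is in hand. The geometric intuition is that a braid sending $\C_\n$ to a standard system can be isotoped so that, read from top to bottom, it first moves the $k$ strand-bundles rigidly (like a $k$-braid with $n_i$-fold parallel copies), and then performs an internal braid on each bundle.

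For (ii), suppose $\alpha\ast\C_\n$ is standard. Choose a representative diffeomorphism; by an ambient isotopy of $D_n$ I can arrange that $\alpha$ carries a regular neighborhood of each component of $\C_\n$ onto a regular neighborhood of its image, with the puncture sets going to puncture sets. Cutting $D^2\times[0,1]$ along the vertical annuli swept out by the components of $\C_\n$ decomposes $\alpha$ into an \emph{outer} braid on $k$ thickened strands (one per bundle) and \emph{inner} braids inside each tubular neighborhood. The outer braid is, by construction, the braid obtained by collapsing each bundle to a single strand; taking $n_i$ parallel copies back recovers $\myangle{\hat\alpha}_\n$. The inner braids are $n_i$-strand braids $\alpha_i\in B_{n_i}$ which can be stacked below to produce the factor $(\alpha_1\oplus\cdots\oplus\alpha_k)_\n$. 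Conversely, by inspection of the diagrams in Figure~\ref{fig:copy}, every braid of the form $\myangle{\hat\alpha}_\n(\alpha_1\oplus\cdots\oplus\alpha_k)_\n$ sends $\C_\n$ to $\C_{\hat\alpha\ast\n}$, which is standard. The identity $\alpha\ast\C_\n=\C_{\hat\alpha\ast\n}$ also drops out of the construction.

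From (ii), uniqueness (i) follows: given two such decompositions of $\alpha$, the outer braid is determined by the induced motion of the bundles, and once $\myangle{\hat\alpha}_\n$ is cancelled on the left, Lemma (vi) and (vii) (proved first as purely algebraic consequences of the definition $B_{n_i}\hookrightarrow B_n$) reduce the equality of the inner parts to equality in each $B_{n_i}$. Parts (iv), (v), (vi), (vii) are diagrammatic identities that follow directly from the definitions: (iv) says that taking $n_i$ parallel copies is functorial on the $k$-braid group, (v) is the corresponding statement for inverses, and (vi), (vii) record the fact that the embedding $B_{n_1}\times\cdots\times B_{n_k}\hookrightarrow B_n$ sending $(\alpha_1,\ldots,\alpha_k)\mapsto (\alpha_1\oplus\cdots\oplus\alpha_k)_\n$ is a group homomorphism on each factor with commuting images. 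Part (iii) is obtained by sliding the inner braids up past $\myangle{\hat\alpha}_\n$: geometrically, after the bundles are permuted by $\theta=\pi_{\hat\alpha}$, the $i$-th inner braid ends up living in the $\theta(i)$-th position, which is exactly the content of the formula.

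For (viii), the decomposition $\Delta_{(n)}=\myangle{\Delta_{(k)}}_\n(\Delta_{(n_1)}\oplus\cdots\oplus\Delta_{(n_k)})_\n$ is a direct geometric observation: the half-twist on $n$ punctures, when the punctures are grouped into $k$ blocks of sizes $n_1,\ldots,n_k$, equals the half-twist that permutes the blocks (with $n_i$-fold parallel copies) composed with the internal half-twists of each block. Given this explicit decomposition, the ``if'' direction is immediate, and the ``only if'' direction follows from the uniqueness in (i). The main obstacle throughout is making the isotopy argument in (ii) precise; once one commits to a careful cut-and-paste description of the tubular neighborhoods swept out by $\C_\n\times[0,1]$, all the remaining items are routine.
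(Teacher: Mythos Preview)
The paper does not prove this lemma: it is quoted verbatim from \cite[Lemmas~3.5 and~3.6]{LL08} and is used as a black box, with only the brief remark afterwards that parts (iii)--(vii) ``are similar to the computations in the wreath product $G\wr B_k$.'' So there is no proof in the paper to compare against.

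That said, your sketch is a correct outline of the standard argument and is consistent with how the cited source treats it. The organizing principle you identify---that (ii) is the structural core and everything else is bookkeeping---is exactly right, and your cut-and-paste description for (ii) (isotope so the curve system sweeps out disjoint annuli in $D^2\times[0,1]$, collapse tubes to get $\hat\alpha$, read off the interior braids inside the tubes) is the intended geometric picture. Your justification of (i) via the well-definedness of the tubular (exterior) braid, followed by cancellation and injectivity of $B_{n_1}\times\cdots\times B_{n_k}\hookrightarrow B_n$, is sound; just be sure to note that the latter injectivity is itself a small fact needing one line (e.g.\ the images lie in disjoint sub-disks). Parts (iii)--(vii) are indeed routine wreath-product style identities, and your argument for (viii) via the explicit factorization of $\Delta_{(n)}$ plus uniqueness from (i) is correct.
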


Lemma~\ref{lem:decom}~(iii)--(vii)
are similar to the computations in the wreath product $G\wr B_k$,
where $k$-braids act on the $k$-cartesian product of $G$ by permuting coordinates.
Let $n=mk$ with $m,k\ge 2$, and let $\n=(m,\ldots,m)$ be a composition of $n$.
Let $B_n(\C_\n)=\{\alpha\in B_n\mid \alpha*\C_\n=\C_\n\}$.
Then Lemma~\ref{lem:decom} shows that
$B_n(\C_\n)$ is indeed isomorphic to $B_m\wr B_k$.

\begin{definition}
Let $\n=(n_1,\ldots,n_k)$ be a composition of $n$,
and let $\alpha\in B_n$ be such that $\alpha*\C_\n$ is standard.
Then $\alpha$ is uniquely expressed as
$\alpha=\myangle{\hat\alpha}_\n(\alpha_1\oplus\cdots\oplus\alpha_k)_\n$
by Lemma~\ref{lem:decom}.
We call $\hat\alpha$ the \emph{$\n$-exterior braid} of $\alpha$,
and denote it by $\Ext_\n(\alpha)$.
The braids $\alpha_1,\ldots,\alpha_k$ are called
\emph{$\n$-interior braids} of $\alpha$.
If $\Ext_\n(\alpha)$ is the identity,
$\alpha$ is called an \emph{$\n$-split braid}.
\end{definition}

The lemma below follows from the above lemma and properties of $\Rext(\cdot)$.

\begin{lemma}\label{lem:Rext}
Let $\alpha$ and $\beta$ be $n$-braids such that
$\alpha*\C_\n=\beta*\C_\n=\C_\n$
for a composition $\n$ of $n$.
\begin{itemize}
\item[(i)]
$\Ext_{\n}(\alpha\beta)=\Ext_{\n}(\alpha)\Ext_{\n}(\beta)$
and $\Ext_\n(\alpha^{-1})=\Ext_\n(\alpha)^{-1}$.
\item[(ii)]
If\/ $\Rext(\alpha)=\Rext(\beta)=\C_\n$ and $\gamma$ is an $n$-braid
with $\beta=\gamma\alpha\gamma^{-1}$, then
$\gamma*\C_\n=\C_\n$ and
$\Ext_\n(\beta)=\Ext_\n(\gamma)\Ext_\n(\alpha)\Ext_\n(\gamma)^{-1}$.
\end{itemize}
\end{lemma}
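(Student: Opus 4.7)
The plan is to derive (i) by a direct computation using the algebra of decomposed braids from Lemma~\ref{lem:decom}, and then to deduce (ii) by combining (i) with the equivariance of the canonical reduction system.

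For (i), Lemma~\ref{lem:decom}~(ii) applied to the standard curve systems $\alpha*\C_\n=\C_\n$ and $\beta*\C_\n=\C_\n$ supplies decompositions $\alpha=\myangle{\hat\alpha}_\n(\alpha_1\oplus\cdots\oplus\alpha_k)_\n$ and $\beta=\myangle{\hat\beta}_\n(\beta_1\oplus\cdots\oplus\beta_k)_\n$, together with the identities $\C_\n=\alpha*\C_\n=\C_{\hat\alpha*\n}$ and $\C_\n=\C_{\hat\beta*\n}$. Since a composition of $n$ is determined by its standard curve system, this forces $\hat\alpha*\n=\n$ and $\hat\beta*\n=\n$. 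I would then manipulate the product $\alpha\beta=\myangle{\hat\alpha}_\n(\alpha_1\oplus\cdots)_\n\myangle{\hat\beta}_\n(\beta_1\oplus\cdots)_\n$ using parts (iii)--(vi) of Lemma~\ref{lem:decom}: shuffle the interior blocks past the exterior factors via (iii), merge the two exteriors by (iv), and combine the interiors by (vi). The fact that $\hat\alpha*\n=\hat\beta*\n=\n$ guarantees that every factor stays indexed by the same composition $\n$, so the rewriting goes through without bookkeeping mismatches. The final expression has the shape $\alpha\beta=\myangle{\hat\alpha\hat\beta}_\n(\mu_1\oplus\cdots\oplus\mu_k)_\n$, and the uniqueness assertion in Lemma~\ref{lem:decom}~(i) yields $\Ext_\n(\alpha\beta)=\hat\alpha\hat\beta$. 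The inverse formula $\Ext_\n(\alpha^{-1})=\Ext_\n(\alpha)^{-1}$ is obtained in the same spirit by combining (v) and (vii) to write $\alpha^{-1}=(\alpha_1^{-1}\oplus\cdots\oplus\alpha_k^{-1})_\n\,\myangle{\hat\alpha^{-1}}_\n$ and then applying (iii) once to restore exterior-interior form.

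For (ii), property (ii) of the canonical reduction system gives $\R(\gamma\alpha\gamma^{-1})=\gamma*\R(\alpha)$; taking outermost components yields $\Rext(\beta)=\gamma*\Rext(\alpha)$, and the hypothesis $\Rext(\alpha)=\Rext(\beta)=\C_\n$ therefore forces $\gamma*\C_\n=\C_\n$. Hence $\gamma$, $\alpha$, and $\gamma^{-1}$ all satisfy the hypothesis of (i), and applying (i) to $\beta=\gamma\alpha\gamma^{-1}$ yields the conjugation formula at once.

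There is no conceptual difficulty; the only place demanding real care is the bookkeeping in parts (iii)--(vii) of Lemma~\ref{lem:decom}. The pivotal observation is the equality $\hat\alpha*\n=\n$: without it, the two exterior factors appearing in the product $\alpha\beta$ would be tagged by different compositions and (iv) could not be invoked directly. Once this is in hand, everything else is essentially symbol pushing.
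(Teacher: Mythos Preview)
Your proposal is correct and matches the paper's own approach: the paper does not spell out a proof but simply remarks that the lemma ``follows from the above lemma and properties of $\Rext(\cdot)$,'' which is exactly the route you take---part~(i) from the calculus in Lemma~\ref{lem:decom} (with the key observation $\hat\alpha*\n=\n$), and part~(ii) from the equivariance $\Rext(\gamma\alpha\gamma^{-1})=\gamma*\Rext(\alpha)$ combined with~(i).
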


\section{Proof of Theorem~\ref{thm:main1}}

In this section, we prove Theorem~\ref{thm:main1} by presenting
three examples which show that
none of the monomorphisms
$\psi_1:A(\arB_n)\to A(\arA_n)$,
$\psi_2:A(\tilde \arA_{n-1}) \to A(\arB_n)$ and
$\psi_3:A(\tilde \arC_{n-1}) \to A(\arB_n)$
is injective on the set of conjugacy classes for $n\ge 3$.
Recall that
$$
\begin{array}{l}
A(\arA_n)=B_{n+1};\\
A(\arB_n)=B_{n+1,1};
\end{array}\quad
\begin{array}{l}
A(\tilde\arA_{n-1})=\{\alpha\in B_{n+1}\mid \mbox{$\alpha$ is 1-unlinked} \};\\
A(\tilde\arC_{n-1})=\{\alpha\in B_{n+1}\mid \mbox{$\alpha$ is $\{1,n+1\}$-pure} \}.
\end{array}
$$

\begin{proof}[Proof of Theorem~\ref{thm:main1}]
Consider the following $(n+1)$-braids for $n\ge 3$.
See Figure~\ref{fig:Nconj}.
$$
\biggl\{\begin{array}{l}
\alpha_1=\sigma_1^2\sigma_2^4,\\
\beta_1=\sigma_2^2\sigma_1^4,
\end{array}
\qquad
\biggl\{\begin{array}{l}
\alpha_2=\sigma_n^2\sigma_{n-1}^4,\\
\beta_2=\sigma_{n-1}^2\sigma_n^4,
\end{array}
\qquad\mbox{and}\qquad
\biggl\{\begin{array}{l}
\alpha_3=\sigma_n\cdots\sigma_2,\\
\beta_3=\sigma_1^{-2}\alpha_3\sigma_1^2.
\end{array}
$$

\begin{figure}\tabcolsep=4pt
\begin{tabular}{*7c}
\includegraphics[scale=.8]{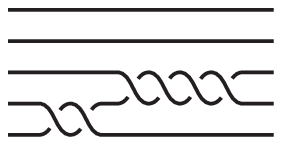} &&
\includegraphics[scale=.8]{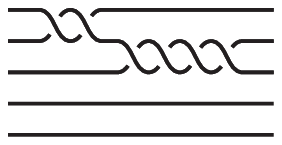} &&
\includegraphics[scale=.8]{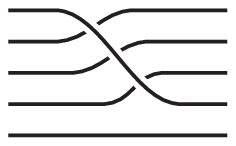} &&
\includegraphics[scale=.5]{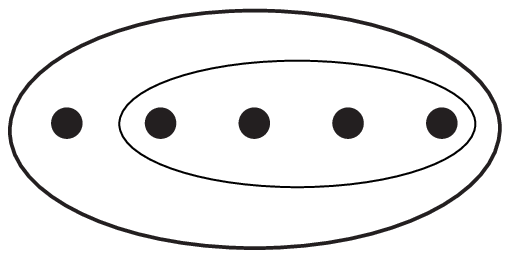}\\
(a) $\alpha_1=\sigma_1^2\sigma_2^4$ &&
(b) $\alpha_2=\sigma_n^2\sigma_{n-1}^4$ &&
(c) $\alpha_3=\sigma_n\cdots\sigma_2$ &&
(d) $\Rext(\alpha_3)$ \\[3mm]
\includegraphics[scale=.8]{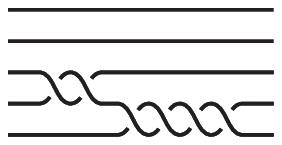} &&
\includegraphics[scale=.8]{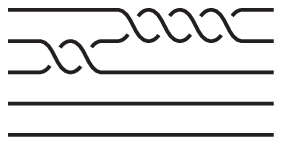} &&
\includegraphics[scale=.8]{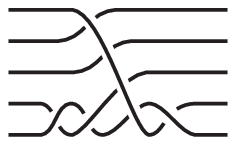} &&
\includegraphics[scale=.8]{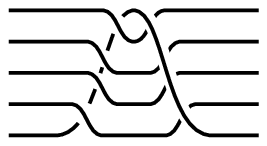}\\
(e) $\beta_1=\sigma_2^2\sigma_1^4$ &&
(f) $\beta_2=\sigma_{n-1}^2\sigma_n^4$ &&
(g) $\beta_3=\sigma_1^{-2}\alpha_3\sigma_1^2$ &&
(h) $\chi=(\sigma_1\cdots\sigma_n)(\sigma_n\cdots\sigma_1)$
\end{tabular}
\caption{The examples for Theorem~\ref{thm:main1} when $n=4$}\label{fig:Nconj}
\end{figure}

First, let us consider $\alpha_1$ and $\beta_1$.
Because $\alpha_1$ and $\beta_1$ are 1-pure,
they belong to $B_{n+1,1}$.
They are conjugate in $B_{n+1}$
because $\beta_1=(\sigma_1\sigma_2\sigma_1)\alpha_1
(\sigma_1\sigma_2\sigma_1)^{-1}$.
However they are not conjugate in $B_{n+1,1}$
because $\lk(\alpha_1)= 1$ and $\lk(\beta_1)= 2$.
This shows that $\psi_1:A(\arB_n)\to A(\arA_n)$
is not injective on the set of conjugacy classes.

\smallskip
The second pair $(\alpha_2,\beta_2)$ is similar to the first one.
Notice that $n\ge 3$.
Both $\alpha_2$ and $\beta_2$ are $\{1,n+1\}$-pure,
hence they belong to $A(\tilde\arC_{n-1})$.
They are conjugate in $B_{n+1,1}$ because
$\sigma_n\sigma_{n-1}\sigma_n$ is 1-pure and
$\beta_2=(\sigma_n\sigma_{n-1}\sigma_n)
\alpha_2(\sigma_n\sigma_{n-1}\sigma_n)^{-1}$.
However $\alpha_2$ cannot be conjugated to $\beta_2$ by
an $\{1,n+1\}$-pure braid.
An easy way to see this is to consider the linking number,
say $\lk'$, of $(n+1)$-st strand with the other strands,
which is a well-defined homomorphism
from $A(\tilde \arC_{n-1})$ to $\Z$.
Because $\lk'(\alpha_2)=1$ and $\lk'(\beta_2)=2$,
the braids $\alpha_2$ and $\beta_2$ are not conjugate in $A(\tilde \arC_{n-1})$.
This shows that $\psi_3:A(\tilde \arC_{n-1})\to A(\arB_n)$
is not injective on the set of conjugacy classes.

\smallskip
Now, let us consider the last pair $(\alpha_3,\beta_3)$.
Because both $\alpha_3$ and $\beta_3$ are 1-unlinked,
they belong to $A(\tilde \arA_{n-1})$.
They are conjugate in $B_{n+1,1}$
because $\sigma_1^2$ is 1-pure
and $\beta_3=\sigma_1^{-2}\alpha_3\sigma_1^2$.

The claim below shows the structure of $Z_{B_{n+1}}(\alpha_3)$.
It is a direct consequence of the main result of
J.~Gonz\'alez-Meneses and B.~Wiest in~\cite{GW04},
however we provide a proof for the reader's convenience.

\begin{claim}{Claim}
$Z_{B_{n+1}}(\alpha_3)$ is generated by $\alpha_3$ and
$\chi=(\sigma_1\cdots\sigma_n)(\sigma_n\cdots\sigma_1)$.
In particular, every element $\zeta$ in $Z_{B_{n+1}}(\alpha_3)$
is 1-pure with $\lk(\zeta)\equiv 0 \pmod n$.
\end{claim}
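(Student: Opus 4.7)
The plan is to realize $\alpha_3$ as a reducible braid with an explicit canonical reduction system, and then to use the decomposition machinery of Lemma~\ref{lem:decom} to constrain any braid that centralizes it. The first step is to observe that $\alpha_3=\sigma_n\cdots\sigma_2$ does not involve $\sigma_1$: its restriction to the subdisk enclosing the punctures $2,\ldots,n+1$ is exactly the rotation $\delta_{(n)}$, which is periodic, and its action on the outer annulus (containing the single puncture $1$) is trivial. Hence the round circle $C$ enclosing punctures $2,\ldots,n+1$ is an adequate reduction system; since $\alpha_3$ is manifestly non-periodic (the two restrictions have incompatible periodic behaviour), properties (iii)--(iv) of the canonical reduction system force $\R(\alpha_3)=\{C\}=\C_{(1,n)}$.

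Next, for any $\zeta\in Z_{B_{n+1}}(\alpha_3)$, property (ii) of $\R$ gives $\zeta*\C_{(1,n)}=\C_{(1,n)}$, and Lemma~\ref{lem:decom}~(ii) yields a unique decomposition $\zeta=\myangle{\hat\zeta}_{(1,n)}(1\oplus\zeta_2)_{(1,n)}$. Because $\hat\zeta*(1,n)=(1,n)$ and $n\ge 2$, the permutation of $\hat\zeta\in B_2$ must be trivial, so $\hat\zeta=\sigma_1^{2k}$ for some $k\in\mathbb Z$. Writing $\alpha_3=\myangle{1}_{(1,n)}(1\oplus\delta_{(n)})_{(1,n)}$ and expanding $\zeta\alpha_3=\alpha_3\zeta$ via Lemma~\ref{lem:decom}~(iii),~(vi), the uniqueness part~(i) reduces the relation to $\zeta_2\delta_{(n)}=\delta_{(n)}\zeta_2$. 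At this point I would invoke Theorem~\ref{thm:cent} with $d=\gcd(1,n)=1$ to conclude $Z_{B_n}(\delta_{(n)})\simeq A(\arB_1)\simeq\mathbb Z=\langle\delta_{(n)}\rangle$, and hence $\zeta_2=\delta_{(n)}^l$ for some $l\in\mathbb Z$.

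The final step is to recognise $\chi=(\sigma_1\cdots\sigma_n)(\sigma_n\cdots\sigma_1)=\myangle{\sigma_1^2}_{(1,n)}$ by direct expansion, from which $\zeta=\chi^k\alpha_3^l$; the two generators visibly commute since one has trivial exterior and the other has trivial interior in the $(1,n)$-decomposition. For the ``in particular'' clause, both $\chi$ and $\alpha_3$ are $1$-pure, so every $\zeta$ is $1$-pure; since $\alpha_3$ uses no $\sigma_1$ we have $\lk(\alpha_3)=0$, whereas $\chi$ winds strand~$1$ once (positively) around each of the other $n$ strands, giving $\lk(\chi)=n$, so $\lk(\zeta)=kn\equiv0\pmod n$. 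The only mildly delicate step is the pinpointing $\R(\alpha_3)=\C_{(1,n)}$, which requires checking both adequacy and non-emptiness; once that is in hand, the rest is formal bookkeeping in the wreath-product-style calculus of Lemma~\ref{lem:decom}.
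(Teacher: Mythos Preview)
Your proof is correct and follows essentially the same route as the paper's: write $\alpha_3=(1\oplus\delta_{(n)})_{(1,n)}$, use $\Rext(\alpha_3)=\C_{(1,n)}$ to force any centralizing $\zeta$ into the form $\myangle{\sigma_1^{2k}}_{(1,n)}(1\oplus\zeta_2)_{(1,n)}$ with $\zeta_2\in Z_{B_n}(\delta_{(n)})=\langle\delta_{(n)}\rangle$, and then read off the generators and linking numbers. You supply more detail than the paper does (adequacy of $\C_{(1,n)}$, the reason $\hat\zeta$ is pure, the explicit use of Lemma~\ref{lem:decom}), but the argument is the same; the only point to tighten is the passage from $Z_{B_n}(\delta_{(n)})\simeq\mathbb Z$ to $Z_{B_n}(\delta_{(n)})=\langle\delta_{(n)}\rangle$, which the paper handles by citing \cite{BDM02} directly rather than Theorem~\ref{thm:cent}.
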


\begin{proof}[Proof of Claim]
Let $\n=(1,n)$ be a composition of $n+1$,
and let $Z(\alpha_3)=Z_{B_{n+1}}(\alpha_3)$.
Note that $\alpha_3=(1\oplus \delta_{(n)})_\n$ and $\Rext(\alpha_3)=\C_\n$.
See Figure~\ref{fig:Nconj}~(d).
If $\gamma\in Z(\alpha_3)$, then $\gamma*\C_\n=\C_\n$, hence
$\gamma$ is of the form
$$
\gamma=\myangle{\hat\gamma}_\n(1\oplus\gamma_1)_\n
$$
for some $\hat\gamma\in B_2$ and $\gamma_1\in B_n$.
By direct computation, we can see that
$\gamma\in Z(\alpha_3)$ if and only if
$\hat\gamma=\sigma_1^{2k}$ for some $k\in\Z$
and $\gamma_1$ commutes with $\delta_{(n)}$.
Because $Z_{B_n}(\delta_{(n)})$ is infinite cyclic generated by
$\delta_{(n)}$ by~\cite{BDM02},
$Z(\alpha_3)$ is generated by
$\myangle{\sigma_1^2}_\n=\chi$ and $(1\oplus\delta_{(n)})_\n=\alpha_3$.

Because $\lk(\chi)=n$ and $\lk(\alpha_3)=0$,
every element in $Z(\alpha_3)$
has linking number a multiple of $n$.
\end{proof}

Assume that $\alpha_3$ and $\beta_3$ are conjugate in $A(\tilde \arA_{n-1})$.
Then there exists a 1-unlinked $(n+1)$-braid $\gamma$ such that
$\alpha_3
=\gamma^{-1}\beta_3\gamma
=\gamma^{-1}\sigma_1^{-2}\alpha_3\sigma_1^2\gamma$.
Therefore $\sigma_1^2\gamma\in Z_{B_{n+1}}(\alpha_3)$.
Because $\lk(\sigma_1^2\gamma) =\lk(\sigma_1^2)+\lk(\gamma)=1+0=1$,
$\lk(\sigma_1^2\gamma)$ is not a multiple of $n\ge 3$.
It is a contradiction.
This shows that $\psi_2:A(\tilde \arA_{n-1})\to A(\arB_n)$
is not injective on the set of conjugacy classes.
\end{proof}

\section{Braids in the centralizer of a periodic braid}

In this section, we review isomorphisms between
Artin groups of type $\arB$ and centralizers
of periodic braids,
and make some tools useful in proving Theorem~\ref{thm:conj}.

\subsection{Discussions}

One of the difficulties in proving Theorem~\ref{thm:conj}
lies in dealing with reducible braids.
Let $(\alpha,\beta,\omega)$ be a triple of $n$-braids such that
$\omega$ is periodic and both $\alpha$ and $\beta$
are non-periodic reducible braids in $Z(\omega)$ which are conjugate in $B_n$.
If $\Rext(\alpha)$ and $\Rext(\beta)$
are standard, it would be very convenient for computations,
but they are not necessarily standard.
Our strategy is to construct another such triple $(\alpha',\beta',\omega')$
such that both $\Rext(\alpha')$ and $\Rext(\beta')$ are standard.
This new triple is related to the original one by conjugations
so that if $\alpha'$ and $\beta'$ are conjugate in $Z(\omega')$
then $\alpha$ and $\beta$ are conjugate in $Z(\omega)$.
This construction is the main theme of this section.

The following lemma is obvious, hence we omit the proof.

\begin{lemma}\label{lem:easy}
Let $\omega$ be a periodic $n$-braid,
and let $\alpha$ and $\beta$ belong to\/ $Z(\omega)$.
\begin{itemize}
\item[(i)]
Let $\chi\in B_n$.
Then $\chi\alpha\chi^{-1}$ and $\chi\beta\chi^{-1}$ belong to $Z(\chi\omega\chi^{-1})$.
Furthermore, $\chi\alpha\chi^{-1}$ and $\chi\beta\chi^{-1}$ are conjugate in
$Z(\chi\omega\chi^{-1})$ if and only if $\alpha$ and $\beta$ are conjugate in $Z(\omega)$.

\item[(ii)]
Let $\chi_1,\chi_2\in Z(\omega)$.
Then
$\chi_1\alpha\chi_1^{-1}$ and $\chi_2\beta\chi_2^{-1}$ belong to $Z(\omega)$.
Furthermore,
$\chi_1\alpha\chi_1^{-1}$ and $\chi_2\beta\chi_2^{-1}$ are conjugate in
$Z(\omega)$ if and only if $\alpha$ and $\beta$ are conjugate in $Z(\omega)$.
\end{itemize}
\end{lemma}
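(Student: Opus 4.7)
The plan is to treat both parts as direct consequences of the fact that conjugation by any group element is an automorphism, and that the centralizer $Z(\omega)$ is a subgroup of $B_n$. No structural input about periodic braids or Artin groups is needed; the statement is purely group-theoretic.

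For part (i), I would first verify that conjugation by $\chi$ carries $Z(\omega)$ bijectively onto $Z(\chi\omega\chi^{-1})$. If $\alpha\omega=\omega\alpha$, then multiplying on the left by $\chi$ and inserting $\chi^{-1}\chi$ between the factors yields $(\chi\alpha\chi^{-1})(\chi\omega\chi^{-1})=(\chi\omega\chi^{-1})(\chi\alpha\chi^{-1})$, so $\chi\alpha\chi^{-1}\in Z(\chi\omega\chi^{-1})$; the reverse inclusion follows by conjugating back by $\chi^{-1}$. Since the inner automorphism $c_\chi:\gamma\mapsto\chi\gamma\chi^{-1}$ restricts to a group isomorphism $Z(\omega)\to Z(\chi\omega\chi^{-1})$, and any isomorphism preserves conjugacy classes, the equivalence of conjugacy for the pair $(\alpha,\beta)$ in $Z(\omega)$ and for the pair $(\chi\alpha\chi^{-1},\chi\beta\chi^{-1})$ in $Z(\chi\omega\chi^{-1})$ is immediate.

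For part (ii), since $\chi_1,\chi_2\in Z(\omega)$ and $Z(\omega)$ is a subgroup, the elements $\chi_1\alpha\chi_1^{-1}$ and $\chi_2\beta\chi_2^{-1}$ lie in $Z(\omega)$. For the equivalence, I would show both implications by exhibiting explicit conjugators inside $Z(\omega)$. If $\beta=\gamma\alpha\gamma^{-1}$ with $\gamma\in Z(\omega)$, then setting $\xi=\chi_2\gamma\chi_1^{-1}\in Z(\omega)$ gives
\[
\chi_2\beta\chi_2^{-1}=\xi(\chi_1\alpha\chi_1^{-1})\xi^{-1}.
\]
Conversely, if $\chi_2\beta\chi_2^{-1}=\xi(\chi_1\alpha\chi_1^{-1})\xi^{-1}$ for some $\xi\in Z(\omega)$, then $\gamma=\chi_2^{-1}\xi\chi_1\in Z(\omega)$ conjugates $\alpha$ to $\beta$.

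There is no real obstacle here; the only mild subtlety is bookkeeping the conjugators in part~(ii) so that they actually lie in $Z(\omega)$, which is automatic once one observes that each factor already does. This is why the authors declare the lemma obvious and omit the proof.
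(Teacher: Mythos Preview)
Your proof is correct and is exactly the elementary group-theoretic verification the authors have in mind; the paper itself omits the proof, remarking that the lemma is obvious. There is nothing to add or compare.
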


\smallskip
From the above lemma, one can think of two ways
in order to make $\Rext(\alpha)$ and $\Rext(\beta)$ standard.
First, one can expect that there exists $\chi\in B_n$
such that both $\chi*\Rext(\alpha)$ and $\chi*\Rext(\beta)$ are standard.
If it is true,
$(\chi\alpha\chi^{-1},\chi\beta\chi^{-1},\chi\omega\chi^{-1})$
can be taken as a desired triple by Lemma~\ref{lem:easy}~(i).
Second, one can expect that there exist $\chi_1,\chi_2\in Z(\omega)$
such that $\chi_1*\Rext(\alpha)$ and $\chi_2*\Rext(\beta)$ are standard.
If it is true,
$(\chi_1\alpha\chi_1^{-1},\chi_2\beta\chi_2^{-1},\omega)$
can be taken as a desired triple by Lemma~\ref{lem:easy}~(ii).
However the following example shows that
each of these ideas does not work for itself.
Our construction will be
as $(\chi\chi_1\alpha\chi_1^{-1}\chi^{-1}, \chi\chi_2\beta\chi_2^{-1}\chi^{-1},
\chi\omega\chi^{-1})$, where $\chi_1,\chi_2\in Z(\omega)$ and $\chi\in B_n$
such that both $\Rext(\chi\chi_1\alpha\chi_1^{-1}\chi^{-1})$ and
$\Rext(\chi\chi_2\beta\chi_2^{-1}\chi^{-1})$ are standard and
$\chi\omega\chi^{-1}$ is a special kind of periodic braid
(see Corollary~\ref{cor:sta}).

\begin{example}\label{ex:4st}
Consider the 4-braids
$$
\omega=\delta^2=(\sigma_3\sigma_2\sigma_1)^2,\qquad
\alpha=\sigma_2\sigma_1\sigma_2^{-1}
\quad\mbox{and}\quad
\beta=\sigma_3\sigma_2\sigma_3^{-1}.
$$
The element $\omega$ is a periodic 4-braid
which corresponds to the $\pi$-rotation of the punctured disk
when punctures are distributed as in Figure~\ref{fig:circ}~(a).
$\alpha$ and $\beta$ are reducible braids in $Z(\omega)$
which are conjugate in $B_4$.
Let $C=\Rext(\alpha)$ and $C'=\Rext(\beta)$.
They enclose two punctures as in Figure~\ref{fig:4st}~(a) and (b).

\begin{figure}\tabcolsep=10pt
\begin{tabular}{*5c}
\includegraphics[scale=.7]{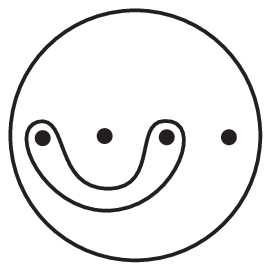} &
\includegraphics[scale=.7]{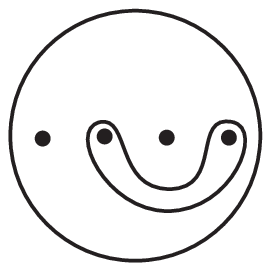} &
\includegraphics[scale=.7]{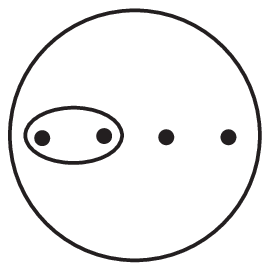} &
\includegraphics[scale=.7]{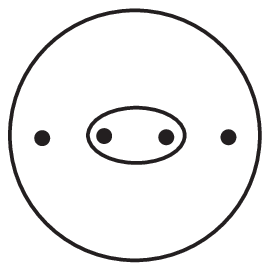} &
\includegraphics[scale=.7]{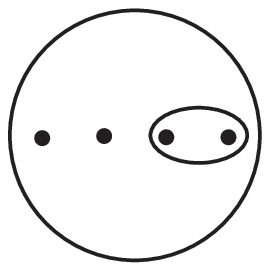} \\[1mm]
\includegraphics[scale=.7]{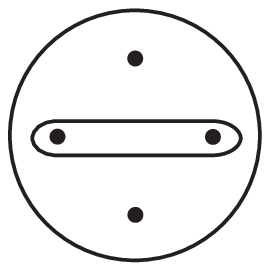} &
\includegraphics[scale=.7]{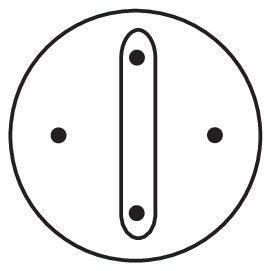} &
\includegraphics[scale=.7]{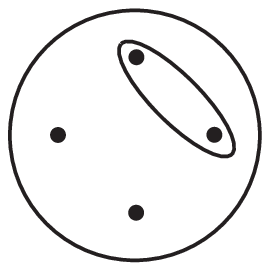} &
\includegraphics[scale=.7]{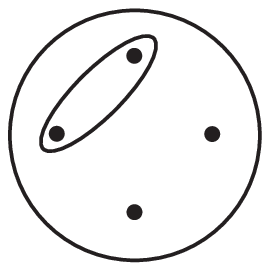} &
\includegraphics[scale=.7]{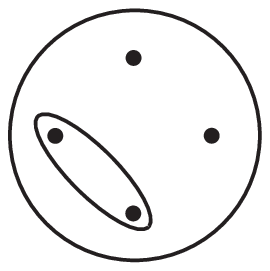}\\
(a) $C$ & (b) $C'$ & (c) & (d) & (e)
\end{tabular}
\caption{The figures in two rows show the same
curves with different location of punctures.}
\label{fig:4st}
\end{figure}

There are three standard curves in $D_4$
enclosing two punctures as in Figure~\ref{fig:4st}~(c)--(e)
and the geometric intersection numbers between two of them
are either 0 or 2.
Because the geometric intersection number between $C$ and $C'$ is 4,
there is no 4-braid $\chi$ such that
$\chi*C$ and $\chi*C'$ are standard.

Assume that there exists $\chi_1\in Z(\omega)$ such that $\chi_1*C$
is standard. Because $C$ encloses two punctures, so does $\chi_1*C$.
Therefore $\chi_1*C$ is one of the three curves
in Figure~\ref{fig:4st}~(c)--(e).
Moreover, $\chi_1*C$ is invariant under the action of $\omega$ because
$\omega*(\chi_1*C)
=\chi_1*(\omega*C)
=\chi_1*C$.
Because none of the curves in Figure~\ref{fig:4st}~(c)--(e)
is invariant under the action of $\omega$, it is a contradiction.
Similarly, there is no $\chi_2\in Z(\omega)$ such that $\chi_2*C'$ is standard.
\end{example}

The monomorphisms $\psi_5$ and $\psi_6$ are compositions of an isomorphism $\varphi$
and the inclusion $\iota$:
$$
\xymatrix{
B_{d+1,1} \ar[r]^\varphi_{\simeq} &
Z(\omega) \ar[r]^\iota &
B_n,
}
$$
where $n=md$ or $md+1$ with $m\ge 2$;
$\omega$ is a periodic $n$-braid conjugate to $\delta^k$ with $d=\gcd(k,n)$
or to $\epsilon^k$ with $d=\gcd(k,n-1)$.
Observe the following.
\begin{itemize}
\item[(i)]
$\iota$ is injective on the set of conjugacy classes if and only if
so is $\iota\circ\varphi$.
\item[(ii)]
Let $\omega'$ be a periodic $n$-braid which is conjugate to $\omega$,
and let $\iota':Z(\omega')\to B_n$ be the inclusion.
Then $\iota$ and $\iota'$ are related by an inner automorphism, say $I_\chi$,
of $B_n$ and its restriction to $Z(\omega)$ as follows.
$$
\xymatrix{
Z(\omega) \ar[d]_{\simeq}^{I_\chi} \ar[r]^{\iota} & B_n \ar[d]_{\simeq}^{I_\chi} \\
Z(\omega') \ar[r]^{\iota'} & B_n
}
$$
Therefore, $\iota$ is injective on the set of conjugacy classes
if and only if so is $\iota'$.
\item[(iii)]
Consequently, the injectivity of $\iota\circ\varphi$ on the set of conjugacy classes
depends only on the conjugacy class of $\omega$.
\end{itemize}

Therefore Theorem~\ref{thm:conj} states that
$\iota: Z(\omega)\to B_n$ is injective on the set of conjugacy classes
for any periodic $n$-braid $\omega$.
And Corollary~\ref{thm:main2} is equivalent to
$\iota: Z(\omega)\to B_n$ being injective on the set of conjugacy classes
for any noncentral periodic $n$-braid $\omega$.
Because Theorem~\ref{thm:conj} obviously holds for
central braids $\omega$, we have the following.

\begin{lemma}\label{lem:equiv}
Theorem~\ref{thm:conj} and Corollary~\ref{thm:main2} are equivalent.
\end{lemma}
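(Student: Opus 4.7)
The plan is to reformulate both Theorem~\ref{thm:conj} and Corollary~\ref{thm:main2} as the assertion that the inclusion $\iota:Z(\omega)\to B_n$ is injective on the set of conjugacy classes for an appropriate family of periodic $n$-braids $\omega$, exactly as sketched in observations (i)--(iii) of the preceding discussion. Theorem~\ref{thm:conj} is this statement for \emph{every} periodic $\omega\in B_n$, while Corollary~\ref{thm:main2} is the same statement restricted to $\omega=\delta_{(md)}^d$ (corresponding to $\psi_5$) and $\omega=\epsilon_{(md+1)}^d$ (corresponding to $\psi_6$), with $m\ge 2$ and $d\ge 1$.

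For the implication Theorem~\ref{thm:conj} $\Rightarrow$ Corollary~\ref{thm:main2}, I would write $\psi_5=\iota\circ\varphi$ and $\psi_6=\iota\circ\varphi$, where $\varphi:A(\arB_d)\to Z(\omega)$ is the isomorphism supplied by Theorem~\ref{thm:cent} and $\iota$ is the relevant inclusion into $B_n$. Because any group isomorphism induces a bijection on conjugacy classes, injectivity of $\iota\circ\varphi$ on conjugacy classes is equivalent to injectivity of $\iota$, which is precisely what Theorem~\ref{thm:conj} provides.

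For the converse, I would let $\omega$ be an arbitrary periodic $n$-braid and split into two cases. If $\omega$ is central (a power of $\Delta^2$), then $Z(\omega)=B_n$ and there is nothing to prove. If $\omega$ is noncentral, then by the Brouwer--de~K\'er\'ekj\'art\'o--Eilenberg theorem cited earlier, $\omega$ is conjugate to $\delta^k$ for some $k$ not divisible by $n$, or to $\epsilon^k$ for some $k$ not divisible by $n-1$. Setting $d=\gcd(k,n)$ or $d=\gcd(k,n-1)$ accordingly, we obtain $1\le d<n$ (resp.\ $d<n-1$), hence $n=md$ (resp.\ $n=md+1$) with $m\ge 2$. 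Lemma~\ref{lem:order}~(iii) gives $Z(\delta^k)=Z(\delta^d)$ and $Z(\epsilon^k)=Z(\epsilon^d)$, so it suffices to handle $\omega_0=\delta_{(md)}^d$ and $\omega_0=\epsilon_{(md+1)}^d$. For such $\omega_0$, Corollary~\ref{thm:main2} combined with the isomorphism $\varphi$ yields that $\iota:Z(\omega_0)\to B_n$ is injective on conjugacy classes. Finally, choosing $\chi\in B_n$ with $\omega=\chi\omega_0\chi^{-1}$, the inner automorphism $I_\chi$ restricts to an isomorphism $Z(\omega_0)\to Z(\omega)$ and fits into the commutative square of observation (ii), which transfers injectivity on conjugacy classes from the $\omega_0$-inclusion to the $\omega$-inclusion.

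Since the excerpt already assembles the three key observations, there is no serious obstacle: the proof simply chains together three reductions---the isomorphism $\varphi$, the equality $Z(\delta^k)=Z(\delta^d)$ from Lemma~\ref{lem:order}, and conjugation by $\chi$---verifying at each step that the property ``injective on the set of conjugacy classes'' is preserved. The only mild subtlety is keeping track of both families ($\delta$ and $\epsilon$, linked to $\psi_5$ and $\psi_6$ respectively) in parallel so that every noncentral periodic braid is covered.
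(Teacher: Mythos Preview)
Your proposal is correct and follows essentially the same approach as the paper: the paper's proof is the paragraph immediately preceding the lemma, which uses observations (i)--(iii) to identify Theorem~\ref{thm:conj} with injectivity of $\iota$ for all periodic $\omega$ and Corollary~\ref{thm:main2} with the same for all noncentral periodic $\omega$, then notes that the central case is trivial since $Z(\omega)=B_n$. Your write-up simply unpacks these reductions (the isomorphism $\varphi$, Lemma~\ref{lem:order}~(iii), and conjugation by $\chi$) in more detail than the paper does.
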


We remark that there are infinitely many isomorphisms from $B_{d+1,1}$ to $Z(\omega)$,
and any two of them are related by an automorphism of
$B_{d+1,1}=A(\arB_d)$. See~\cite{CC05} for the classification
of automorphisms of Artin groups of type $\arA$, $\arB$,
$\tilde\arA$ and $\tilde\arC$.

\subsection{Periodic braids and their centralizers}
\label{sec:def-mu}

Let $m$ and $d$ be integers with $m\ge 2$ and $d\ge 1$, and let $n=md+1$.
Here we construct a periodic $n$-braid $\mu_{m,d}$
conjugate to $\epsilon^d$,
and an isomorphism $B_{d+1,1}\simeq Z(\mu_{m,d})$
following the work in~\cite{BDM02}.

Let $Z_n^{(m)}=\{z_0\}\cup \{z_{i,j}\mid 1\le i\le d,~1\le j\le m\}$,
where $z_0$ is the origin and $z_{i,j}$ is the complex number
with $|z_{i,j}|=i$ and $\arg(z_{i,j})=2\pi (j-1)/m$.
Let
$$
D_n^{(m)} =\{z\in\mathbb C\mid |z|\le n+1\}
\setminus Z_n^{(m)}.
$$
\begin{figure}
$\xymatrix{
\includegraphics[scale=.7]{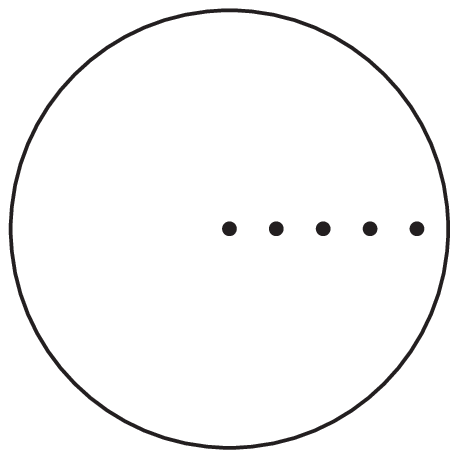} &
\includegraphics[scale=.7]{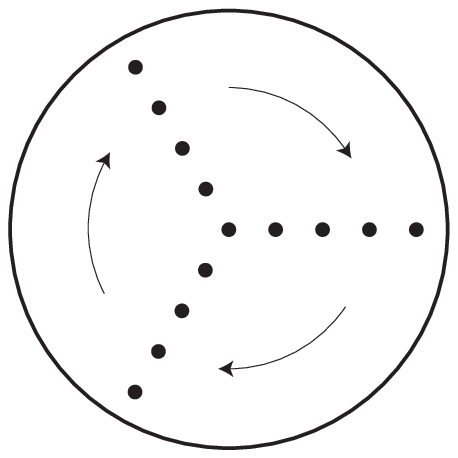}
    \ar[r]_{\eta_{m,d}\qquad \mbox{}}
    \ar[l]^{\phi_{m,d}\mbox{}}
&\includegraphics[scale=.7]{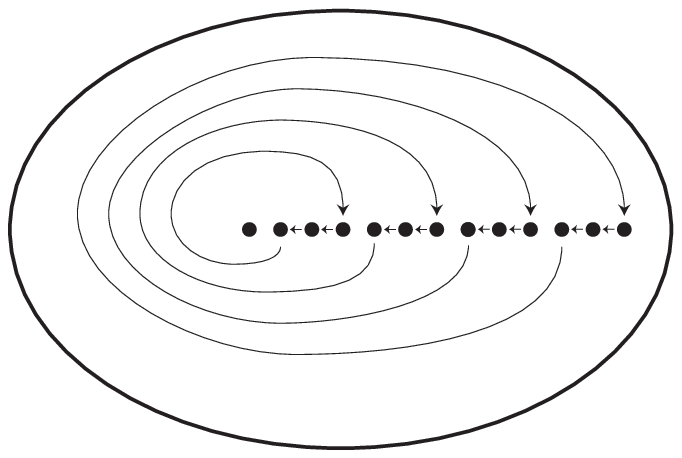}
}$\par
\mbox{}\hspace{10mm} $D_{d+1}$
\mbox{}\hspace{25mm} $D_n^{(m)}$ and $\rho_m$
\mbox{}\hspace{20mm} $D_{n}$ and $\eta_{m,d}\circ\rho_m\circ\eta_{m,d}^{-1}$
\caption{The punctured spaces $D_n^{(m)}$ and $D_{d+1}$,
where $n=13$, $m=3$ and $d=4$}
\label{fig:punc-ray}
\end{figure}
Now we construct a covering map $\phi_{m,d}: D_n^{(m)}\to D_{d+1}$
and a diffeomorphism $\eta_{m,d}: D_n^{(m)}\to D_n$ as in Figure~\ref{fig:punc-ray}.
Let $\rho_m$ be the clockwise $(2\pi/m)$-rotation of $D_n^{(m)}$.
The identification space $D_n^{(m)}/\langle\rho_m\rangle$
is diffeomorphic
to the $(d+1)$-punctured disk $D_{d+1}$.
We write the natural projection by
$$
\phi_{m,d}: D_n^{(m)}\to D_{d+1}.
$$
$\phi_{m,d}$ is a covering map
whose group of covering transformations is
the cyclic group generated by $\rho_m$.
We construct a diffeomorphism
$$
\eta_{m,d}: D_n^{(m)}\to D_n,
$$
which fixes $\partial D_n^{(m)}=\partial D_n$ pointwise, as follows.
First, cut $D_n^{(m)}$ and $D_n$ by $d$ round circles
into one once-punctured disk and $d$ annuli with $m$ punctures
as in Figure~\ref{fig:diffeo}~(a).
Then, define $\eta_{m,d}$ on each punctured annulus
as in Figure~\ref{fig:diffeo}~(b) such that it coincides
at the intersections of two adjacent annuli,
and extend $\eta_{m,d}$ to the once-punctured disk in an obvious way.
Such a diffeomorphism $\eta_{m,d}$ is unique up to isotopy.

\begin{figure}
\begin{tabular}{cc}
$\xymatrix{
\includegraphics[scale=.6]{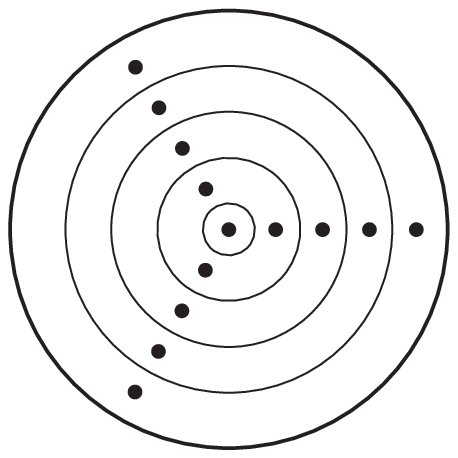}\ar[r]^{\eta_{m,d}}&
\includegraphics[scale=.6]{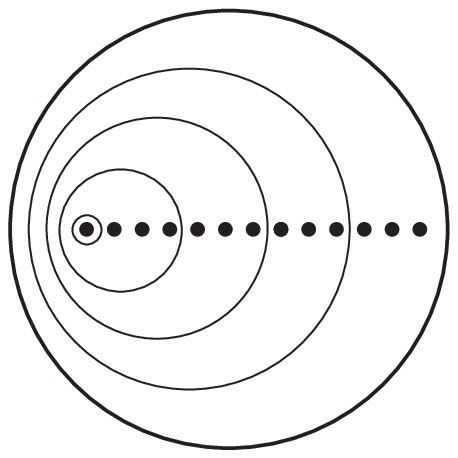}}$ &
$\xymatrix{
\includegraphics[scale=.6]{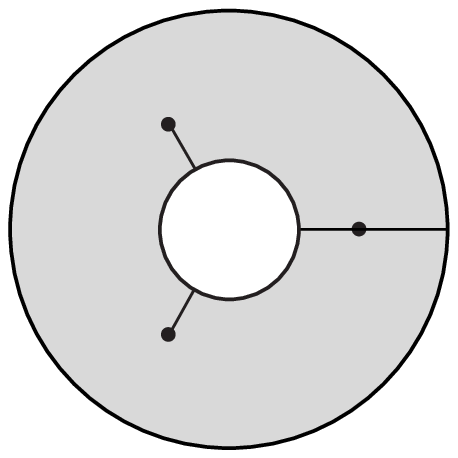}\ar[r]^{\eta_{m,d}}&
\includegraphics[scale=.6]{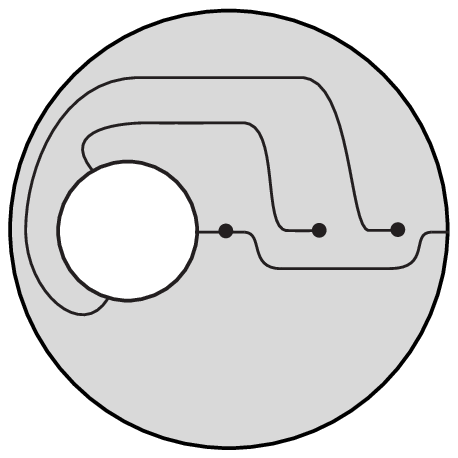}}$\\
(a) & (b)
\end{tabular}
\caption{The diffeomorphism $\eta_{m,d}: D_{n}^{(m)} \to D_{n}$}
\label{fig:diffeo}
\end{figure}

\begin{definition}
Let $m$ and $d$ be integers with $m\ge 2$ and $d\ge 0$.
We define an $(md+1)$-braid $\mu_{m,d}$ recursively as follows:
$\mu_{m,0}$ is the unique braid with one strand;
for $d\ge 1$,
$$\mu_{m,d}=\mu_{m,d-1}(\sigma_{dm}\sigma_{dm-1}\cdots\sigma_2\sigma_1)
(\sigma_1\sigma_2\cdots\sigma_{(d-1)m+1}),
$$
where $\mu_{m,d-1}$ is regarded as an $(md+1)$-braid
by using the inclusion $B_{m(d-1)+1}\to B_{md+1}$
defined by $\sigma_i\mapsto \sigma_i$ for $1\le i\le m(d-1)$.
See Figure~\ref{fig:mu} for $\mu_{m,d}$ with $m=3$ and $1\le d\le 4$.
\end{definition}

\begin{figure}
\tabcolsep=.7em
\begin{tabular}{cccc}
\includegraphics[scale=.6]{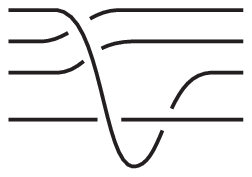} &
\includegraphics[scale=.6]{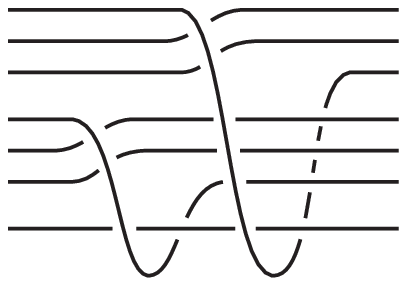} &
\includegraphics[scale=.6]{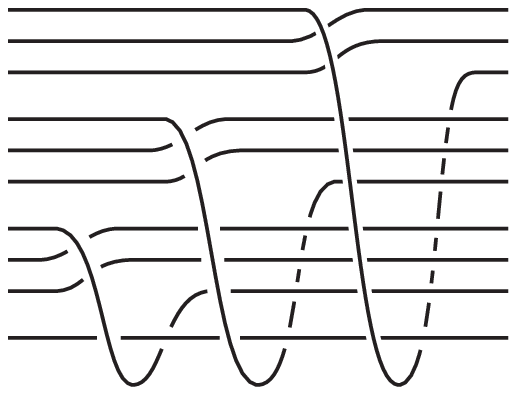} &
\includegraphics[scale=.6]{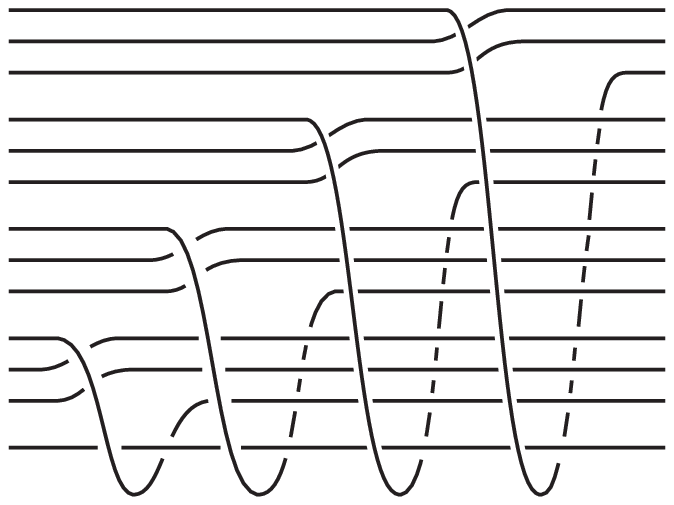} \\
$\mu_{3,1}$ &
$\mu_{3,2}$ &
$\mu_{3,3}$ &
$\mu_{3,4}$
\end{tabular}
\caption{The periodic braid $\mu_{3,d}$ for $d=1,2,3,4$}
\label{fig:mu}
\end{figure}

From the construction, we can see the following.
\begin{itemize}
\item[(i)]
$\eta_{m,d}\circ\rho_m\circ\eta_{m,d}^{-1}: D_n\to D_n$ represents
the braid $\mu_{m,d}$ modulo $\Delta^2$.

\item[(ii)]
An $n$-braid belongs to $Z(\mu_{m,d})$
if and only if it is represented by $\eta_{m,d}\circ h\circ \eta_{m,d}^{-1}$
for a $\rho_m$-equivariant diffeomorphism $h:D_n^{(m)}\to D_n^{(m)}$
that fixes the boundary pointwise.
\end{itemize}

\smallskip
Because $(\mu_{m,d})^m=\Delta_{(n)}^2=(\epsilon_{(n)}^d)^m$,
$\mu_{m,d}$ is conjugate to $\epsilon_{(n)}^d$, hence
$\nu(\mu_{m,d})$ is conjugate to $\delta_{(n-1)}^d$.
Recall that the monomorphisms $\psi_4$, $\psi_5$ and $\psi_6$ are
induced by the isomorphisms $B_{d+1,1}\simeq Z(\epsilon_{(n)}^d)$
and $B_{d+1,1}\simeq Z(\delta_{(n-1)}^d)$ in~\cite{BDM02,GW04}.
In the same way, we define monomorphisms $\psi_4'$, $\psi_5'$ and $\psi_6'$
by using isomorphisms $B_{d+1,1}\simeq Z(\mu_{m,d})$
and $B_{d+1,1}\simeq Z(\nu(\mu_{m,d}))$.

\begin{definition}
Let $m\ge 2$ and $d\ge 1$, and let $n=md+1$.
We define an isomorphism
$$
\varphi:B_{d+1,1}\to Z(\mu_{m,d})
$$
as follows.
Let $\alpha\in B_{d+1,1}$, that is,
$\alpha$ is a 1-pure $(d+1)$-braid.
Then $\alpha$ is represented by a diffeomorphism
$f:D_{d+1}\to D_{d+1}$ that fixes the first puncture
together with all the boundary points.
Let $\tilde f:D_n^{(m)}\to D_n^{(m)}$ be the lift of $f$ (with respect to $\phi_{m,d}$)
that fixes $\partial D_n^{(m)}$ pointwise.
Then $\tilde f$ is $\rho_m$-equivariant, hence
$\eta_{m,d}\circ \tilde f\circ\eta_{m,d}^{-1}$ represents
a braid in $Z(\mu_{m,d})$.
We define $\varphi(\alpha)$ as the braid represented by
$\eta_{m,d}\circ \tilde f\circ\eta_{m,d}^{-1}$.
$$
\xymatrix{
D_n \ar[r]^{\eta_{m,d}^{-1} } &
D_n^{(m)} \ar[d]^{\phi_{m,d}} \ar[r]^{\tilde f} &
D_n^{(m)} \ar[d]^{\phi_{m,d}} \ar[r]^{\eta_{m,d}} & D_n\\
& D_{d+1}\ar[r]^{f} & D_{d+1}
}
$$
The monomorphisms $\psi_4'$ and $\psi_6'$ are compositions
of $\varphi$ with the inclusions $Z(\mu_{m,d})\subset B_{md+1,1}$
and $Z(\mu_{m,d})\subset B_{md+1}$, respectively.
The monomorphism $\psi_5'$ is the composition of $\varphi$ with
the homomorphism $\nu:B_{md+1,1}\to B_{md}$.
See Figure~\ref{fig:inj2}.
\end{definition}

\begin{figure}
$
\xymatrix{
B_{md} &
Z(\nu(\mu_{m,d})) \ar@{_(->}[l] &
Z(\mu_{m,d}) \ar[l]_{\mbox{}\quad\nu} \ar@{^(->}[r] &
B_{md+1,1} \ar@{^(->}[r] &
B_{md+1} \\
& & B_{d+1,1}
\ar[u]_{\varphi}^{\simeq}
\ar[llu]^{\psi_5'}
\ar[lu]_{\simeq}
\ar[ru]^{\psi_4'}
\ar[rru]_{\psi_6'}
}
$
\caption{Monomorphisms $\psi_4'$, $\psi_5'$ and $\psi_6'$}
\label{fig:inj2}
\end{figure}
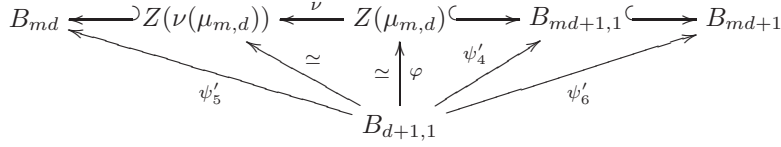

By the construction, $\psi_4'(\Delta_{(d+1)}^2)=\psi_6'(\Delta_{(d+1)}^2)=\mu_{m,d}$.
The following lemma looks obvious,
but we include a sketchy proof for completeness.

\begin{lemma}\label{lem:dyn}
Let $\psi$ denote either $\psi_4':B_{d+1,1}\to B_{md+1,1}$,
$\psi_5':B_{d+1,1}\to B_{md}$ or $\psi_6':B_{d+1,1}\to B_{md+1}$.
The monomorphism $\psi$ preserves the Nielsen-Thurston type,
that is, for $\alpha\in B_{d+1,1}$
\begin{itemize}
\item[(i)]
$\alpha$ is periodic if and only if $\psi(\alpha)$ is periodic;
\item[(ii)]
$\alpha$ is pseudo-Anosov if and only if $\psi(\alpha)$ is pseudo-Anosov;
\item[(iii)]
$\alpha$ is reducible and non-periodic
if and only if $\psi(\alpha)$ is reducible and non-periodic.
\end{itemize}
\end{lemma}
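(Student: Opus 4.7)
My plan is to exploit the explicit geometric description of $\varphi$. An element $\alpha\in B_{d+1,1}$ is represented by a 1-pure diffeomorphism $f:D_{d+1}\to D_{d+1}$, and $\varphi(\alpha)$ is represented, via the fixed diffeomorphism $\eta_{m,d}$, by the unique $\rho_m$-equivariant lift $\tilde f:D_n^{(m)}\to D_n^{(m)}$ along the branched covering $\phi_{m,d}$. Since $\eta_{m,d}$ is a diffeomorphism, the Nielsen--Thurston type of $\varphi(\alpha)\in B_n$ coincides with that of the mapping class of $\tilde f$ on $D_n^{(m)}$. So it suffices to compare Nielsen--Thurston invariants of $f$ with those of $\tilde f$, which is a covering-space computation; a transfer via $\nu$ will then handle $\psi_5'$.

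For $\psi_4'$ and $\psi_6'$ I would dispatch the three Nielsen--Thurston classes in turn. The periodic case follows algebraically from the built-in identity $\varphi(\Delta_{(d+1)}^2)=\mu_{m,d}$: if $\alpha^k=\Delta_{(d+1)}^{2l}$ then $\varphi(\alpha)^k=\mu_{m,d}^l$ is periodic, and conversely $\varphi(\alpha)^k=\Delta_{(n)}^{2l}=\mu_{m,d}^{lm}=\varphi(\Delta_{(d+1)}^{2lm})$ combined with injectivity of $\varphi$ forces $\alpha$ periodic. In the pseudo-Anosov case, the measured foliations of $f$ pull back via $\phi_{m,d}$ to $\rho_m$-invariant measured foliations preserved by $\tilde f$ with the same dilatation, while conversely the canonical pA foliations of $\tilde f$ are $\rho_m$-invariant by uniqueness (as $\rho_m$ commutes with $\tilde f$) and descend to foliations of $f$. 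In the reducible case, any essential reduction system for $f$ pulls back to a $\rho_m$-invariant essential reduction system for $\tilde f$, and conversely $\R(\tilde f)$ is $\rho_m$-invariant by property~(ii) of the canonical reduction system and descends via $\phi_{m,d}$ to a reduction system of $f$.

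For $\psi_5'=\nu\circ\varphi$ one extra step is required: the deletion of the first strand of $\varphi(\alpha)$ corresponds to filling in the puncture $z_0$ in the covering picture, turning $\phi_{m,d}$ into an unbranched $m$-fold cover of $D_d$ by $D_n^{(m)}\cup\{z_0\}$. Since $z_0$ is fixed by both $\rho_m$ and $\tilde f$ (the latter because $\alpha$ is 1-pure), this filling respects the covering and equivariance structure, so the same pullback and descent arguments applied to the resulting unbranched cover show that $\psi_5'(\alpha)$ and $\alpha$ share the same Nielsen--Thurston type.

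The main obstacle I anticipate is the descent step for pseudo-Anosov foliations: one must check that a $\rho_m$-invariant measured foliation on $D_n^{(m)}$ actually quotients to a well-defined measured foliation on $D_{d+1}$, even at the branch point $z_0$. For reduction systems the descent is immediate because essential simple closed curves can be chosen away from any single puncture. For foliations the local model at $z_0$ is more delicate, but it reduces to the observation that $f$ fixes the image of $z_0$ (again by 1-pureness), so the local index there is compatible with the $m$-to-$1$ branching---this is the one point at which the 1-pureness hypothesis is actually used.
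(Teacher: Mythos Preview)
Your plan is correct, but you are doing more work than necessary, and the extra work is exactly the part you flag as the ``main obstacle.''  The paper exploits the fact that the three Nielsen--Thurston classes \{periodic\}, \{pseudo-Anosov\}, \{non-periodic reducible\} partition the braid group.  Hence it suffices to prove (i) in both directions together with only the \emph{forward} implications in (ii) and (iii): if $\alpha$ is pseudo-Anosov then $\psi(\alpha)$ is pseudo-Anosov, and if $\alpha$ is non-periodic reducible then so is $\psi(\alpha)$.  The reverse implications then follow automatically from the trichotomy.  With this reduction, the periodic case is handled exactly as you do (algebraically via $\varphi(\Delta_{(d+1)}^2)=\mu_{m,d}$), and the forward pA and reducible cases are the easy \emph{lifting} directions---foliations and essential curve systems pull back along $\phi_{m,d}$ with no branch-point subtleties.

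Your descent arguments for the converse directions (pushing $\rho_m$-invariant foliations or the canonical reduction system of $\tilde f$ down through $\phi_{m,d}$) are sound in outline, and something close to your reduction-system descent is in fact carried out later in the paper (Corollary~\ref{cor:crs}) where it is genuinely needed.  But for the present lemma it is wasted effort: the trichotomy trick eliminates precisely the step you identified as delicate.  For $\psi_5'$ the paper simply says the argument is similar; your filling-in-the-branch-point description is a valid way to make that precise.
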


\begin{proof}
Assume $\psi=\psi_6'$.
The case of $\psi_4'$ is the same, and the case of
$\psi_5'$ can be proved similarly.
Because the three classes \{periodic braids\},
\{pseudo-Anosov braids\} and
\{non-periodic reducible braids\}
are mutually disjoint,
it suffices to prove (i) together with (ii)$'$ and (iii)$'$ where
\begin{itemize}
\item[(ii)$'$]
If $\alpha$ is pseudo-Anosov, then $\psi(\alpha)$ is pseudo-Anosov;
\item[(iii)$'$]
If $\alpha$ is reducible and non-periodic,
then $\psi(\alpha)$ is reducible and non-periodic.
\end{itemize}

\smallskip(i)\ \
\begin{tabular}[t]{lll}
$\alpha$ is periodic
&$\Leftrightarrow$&
$\alpha^k=(\Delta^2)^{\ell}$
for some $k\neq 0$ and $\ell\in\Z$\\
&$\Leftrightarrow$&
$\psi(\alpha)^k=\psi(\alpha^k)=\psi(\Delta^{2\ell})
=\mu_{m,d}^\ell$
for some $k\neq 0$ and $\ell\in\Z$\\
&$\Leftrightarrow$&
$\psi(\alpha)$ is periodic
\end{tabular}

\smallskip(ii)$'$\ \
Suppose that $\alpha\in B_{d+1,1}$ is pseudo-Anosov.
There is a pseudo-Anosov diffeomorphism, say $f$,
defined on the interior of $D_{d+1}$
representing $\alpha$ modulo $\Delta_{(d+1)}^{2}$.
Let $\tilde f$ be the lift of $f$
(with respect to $\phi_{m,d}$) defined on the interior of $D_{md+1}^{(m)}$.
Then $\tilde f$ is also a pseudo-Anosov diffeomorphism because
invariant measured foliations of $f$ lift to
invariant measured foliations of $\tilde f$
with the same dilatation.
Therefore $\eta_{m,d}\circ \tilde f\circ \eta_{m,d}^{-1}$
is a pseudo-Anosov diffeomorphism.
Since $\eta_{m,d}\circ \tilde f\circ \eta_{m,d}^{-1}$ represents $\psi(\alpha)$
modulo $\Delta_{(md+1)}^{2}$,
$\psi(\alpha)$ is pseudo-Anosov.

\smallskip(iii)$'$\ \
Suppose that $\alpha\in B_{d+1, 1}$ is reducible and non-periodic.
Then $\psi(\alpha)$ is non-periodic by (i).
Because $\alpha$ is reducible, there exists
a diffeomorphism $f:D_{d+1}\to D_{d+1}$, which represents $\alpha$, together with
an essential curve system $\C$ such that $f(\C)=\C$.
Let $\tilde \C=\phi_{m,d}^{-1}(\C)$,
and let $\tilde f$ be a lift of $f$ with respect to $\phi_{m,d}$.
Then $\tilde \C$ is an invariant essential curve system of $\tilde f$,
hence $\eta_{m,d}(\tilde \C)$ is a reduction system of $\psi(\alpha)$.
\end{proof}

\begin{corollary}\label{cor:dyn}
Let $m\ge 2$ and $d\ge 1$.
Let $\omega$ be a 1-pure braid conjugate to $\epsilon_{(md+1)}^d$,
and let $\alpha\in Z(\omega)$.
Then $\alpha$ and $\nu(\alpha)$ have the same Nielsen-Thurston type,
that is,
\begin{itemize}
\item[(i)]
$\alpha$ is periodic if and only if $\nu(\alpha)$ is periodic;
\item[(ii)]
$\alpha$ is pseudo-Anosov if and only if $\nu(\alpha)$ is pseudo-Anosov;
\item[(iii)]
$\alpha$ is reducible and non-periodic
if and only if $\nu(\alpha)$ is reducible and non-periodic.
\end{itemize}
\end{corollary}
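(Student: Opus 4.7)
The plan is to reduce Corollary~\ref{cor:dyn} to Lemma~\ref{lem:dyn} by transporting the problem from $Z(\omega)$ to $Z(\mu_{m,d})$ via a carefully chosen conjugation. Since $\mu_{m,d}$ has been shown to be conjugate to $\epsilon_{(md+1)}^d$, $\omega$ is conjugate to $\mu_{m,d}$, so there exists $\chi \in B_{md+1}$ with $\chi\omega\chi^{-1} = \mu_{m,d}$. I will then set $\alpha' = \chi\alpha\chi^{-1} \in Z(\mu_{m,d})$ and compare the Nielsen--Thurston types of $\alpha'$ and $\nu(\alpha')$ using Lemma~\ref{lem:dyn}, which applies since $\alpha' \in Z(\mu_{m,d}) = \varphi(B_{d+1,1})$ means $\alpha' = \psi_6'(\varphi^{-1}(\alpha'))$ and $\nu(\alpha') = \psi_5'(\varphi^{-1}(\alpha'))$.

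The key technical point—and the one place care is needed—is ensuring that $\nu$ behaves well under the chosen conjugation. First, I observe that $\alpha$ is automatically $1$-pure: the permutation $\pi_\alpha$ must send fixed points of $\pi_\omega$ to fixed points of $\pi_\omega$, and since $\omega$ is $1$-pure and conjugate to $\mu_{m,d}$, the only such fixed point is $1$. Next, I claim $\chi$ can be taken to be $1$-pure as well: computing $\pi_{\chi\omega\chi^{-1}} = \pi_\chi\pi_\omega\pi_\chi^{-1}$, the unique fixed point is $\pi_\chi(1)$, and the $1$-pureness of $\mu_{m,d}$ forces $\pi_\chi(1) = 1$. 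Thus $\chi, \alpha, \alpha' \in B_{md+1,1}$ and $\nu$ is a homomorphism on this group, giving $\nu(\alpha') = \nu(\chi)\nu(\alpha)\nu(\chi)^{-1}$.

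With this in hand, the rest is immediate. By Lemma~\ref{lem:dyn} applied to $\psi_6'$ and $\psi_5'$, the three elements $\varphi^{-1}(\alpha') \in B_{d+1,1}$, $\alpha' = \psi_6'(\varphi^{-1}(\alpha'))$ and $\nu(\alpha') = \psi_5'(\varphi^{-1}(\alpha'))$ all have the same Nielsen--Thurston type. Since the Nielsen--Thurston type is a conjugacy invariant in any braid group, $\alpha$ has the same type as $\alpha'$ and $\nu(\alpha)$ has the same type as $\nu(\alpha')$, so $\alpha$ and $\nu(\alpha)$ share a Nielsen--Thurston type, as required.

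The only real obstacle is the one addressed above: a priori, conjugation in $B_{md+1}$ does not intertwine with $\nu: B_{md+1,1} \to B_{md}$, so a generic conjugating element $\chi$ would not allow the final transport step. The fact that $\omega$ and $\mu_{m,d}$ are both $1$-pure with $1$ as their only pure strand is precisely what forces $\chi$ to lie in $B_{md+1,1}$, resolving this difficulty.
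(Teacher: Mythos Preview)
Your proof is correct and follows essentially the same route as the paper's: both reduce to the case $\omega=\mu_{m,d}$ and then invoke Lemma~\ref{lem:dyn} via the identifications $\alpha'=\psi_4'(\bar\alpha)=\psi_6'(\bar\alpha)$ and $\nu(\alpha')=\psi_5'(\bar\alpha)$. The paper compresses the reduction into a single ``without loss of generality'' sentence, whereas you unpack it and explicitly verify the point the paper leaves implicit---namely that any conjugating element $\chi$ with $\chi\omega\chi^{-1}=\mu_{m,d}$ is automatically $1$-pure (since both $\omega$ and $\mu_{m,d}$ have $1$ as their unique pure strand), so that $\nu$ intertwines with conjugation by $\chi$ and the Nielsen--Thurston type of $\nu(\alpha)$ is carried to that of $\nu(\alpha')$. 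Your version is thus a more careful execution of the same argument.
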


\begin{proof}
Without loss of generality, we may assume that $\omega=\mu_{m,d}$.
Then there exists $\bar\alpha\in B_{d+1,1}$ such that
$\alpha=\psi_4'(\bar\alpha)$ and $\nu(\alpha)=\psi_5'(\bar\alpha)$.
By Lemma~\ref{lem:dyn}, $\alpha$ and $\bar\alpha$
(resp.\ $\nu(\alpha)$ and $\bar\alpha$) have the same Nielsen-Thurston type.
Therefore $\alpha$ and $\nu(\alpha)$ have the same Nielsen-Thurston type.
\end{proof}

From Lemma~\ref{lem:dyn} and its proof, we can see that
the monomorphisms $\psi_4'$, $\psi_5'$ and $\psi_6'$ send
canonical reduction systems to canonical reduction systems
as follows.

\begin{corollary}\label{cor:crs}
Let $m\ge 2$ and $d\ge 1$,
and let $\alpha\in B_{d+1,1}$ be reducible and non-periodic.
If\/ $\C$ is the canonical reduction system of\/ $\alpha$,
then $\eta_{m,d}(\phi_{m,d}^{-1}(\C))$ is the canonical reduction system
of both $\psi_4'(\alpha)$ and $\psi_6'(\alpha)$,
and the canonical reduction system of $\psi_5'(\alpha)$ is obtained from
$\eta_{m,d}(\phi_{m,d}^{-1}(\C))$ by forgetting the first puncture.
In particular, $\Rext(\psi_4'(\alpha))=\Rext(\psi_6'(\alpha))=
\eta_{m,d}(\phi_{m,d}^{-1}(\Rext(\alpha)))$,
and $\Rext(\psi_5'(\alpha))$ is obtained from $\Rext(\psi_4'(\alpha))$ by
forgetting the first puncture.
\end{corollary}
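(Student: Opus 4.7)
The plan is to treat $\psi_4'$ and $\psi_6'$ simultaneously (since they agree as maps into $B_{md+1}$), and deduce the $\psi_5'$ statement at the end by showing that forgetting the first strand does not destroy essentiality of any curve in the canonical reduction system of $\psi_4'(\alpha)$. Throughout, denote $\psi=\psi_6'$, $\C=\R(\alpha)$ and $\tilde\C=\eta_{m,d}(\phi_{m,d}^{-1}(\C))$, and let $f:D_{d+1}\to D_{d+1}$ represent $\alpha$ with $f(\C)=\C$, with lift $\tilde f:D_n^{(m)}\to D_n^{(m)}$ as in the construction of $\varphi$.

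First I would verify that $\tilde\C$ is an \emph{adequate} reduction system of $\psi(\alpha)$. That $\tilde f(\phi_{m,d}^{-1}(\C))=\phi_{m,d}^{-1}(\C)$ was already observed in the proof of Lemma~\ref{lem:dyn}(iii)$'$, so $\tilde\C$ is a reduction system. For adequacy, each component $\tilde D$ of $D_n\setminus\tilde\C$ corresponds, via $\eta_{m,d}^{-1}$, to a component of $D_n^{(m)}\setminus\phi_{m,d}^{-1}(\C)$ which covers (possibly with degree $m/k$ for some $k\mid m$, and branched at most at $z_0$) a component $D'$ of $D_{d+1}\setminus\C$. Since the restriction of $\alpha$ to $D'$ is periodic or pseudo-Anosov by adequacy of $\R(\alpha)$, the lifted action on $\tilde D$ is again periodic (periodicity trivially lifts) or pseudo-Anosov (the invariant measured foliations lift to invariant measured foliations with the same dilatation). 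Property~(iv) of the canonical reduction system then yields $\R(\psi(\alpha))\subseteq\tilde\C$.

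The main step is the reverse inclusion $\tilde\C\subseteq\R(\psi(\alpha))$, and this is where I expect the real work. Since $\psi(\alpha)\in Z(\mu_{m,d})$, property~(ii) of canonical reduction systems forces $\mu_{m,d}*\R(\psi(\alpha))=\R(\psi(\alpha))$, so the curve system $\R^*:=\eta_{m,d}^{-1}(\R(\psi(\alpha)))$ in $D_n^{(m)}$ is $\rho_m$-invariant, hence descends through $\phi_{m,d}$ to a curve system $\C^*$ in $D_{d+1}$ with $\phi_{m,d}^{-1}(\C^*)=\R^*$. By $\rho_m$-equivariance of $\tilde f$, $\C^*$ is invariant under $f$, so it is a reduction system of $\alpha$. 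I would then show $\C^*$ is adequate for $\alpha$: a component of $D_{d+1}\setminus\C^*$ is covered by a (union of) component(s) of $D_n^{(m)}\setminus\R^*$ on which the relevant power of $\tilde f$ is periodic or pseudo-Anosov (by adequacy of $\R(\psi(\alpha))$), and this property descends through the finite (possibly branched) covering $\phi_{m,d}$. Property~(iv) then gives $\R(\alpha)\subseteq\C^*$, and applying $\eta_{m,d}\circ\phi_{m,d}^{-1}$ on both sides yields $\tilde\C\subseteq\R(\psi(\alpha))$.

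Finally, for $\psi_5'=\nu\circ\psi_4'$: the canonical reduction system of $\nu(\gamma)$, for a $1$-pure braid $\gamma$, is obtained from $\R(\gamma)$ by deleting any components that bound disks containing only the first puncture (which become inessential after forgetting) and by regarding the remainder as curves in $D_{md}$. By construction, every component of $\R(\psi_4'(\alpha))=\tilde\C$ lies over an essential curve in $D_{d+1}$, hence encloses at least two punctures of $Z_n^{(m)}$; in particular none of them encloses only the first puncture of $D_n$. Thus no curve of $\tilde\C$ becomes inessential when the first strand is deleted, so $\R(\psi_5'(\alpha))$ is obtained from $\tilde\C$ simply by forgetting the first puncture, and the statement about $\Rext$ follows by passing to outermost components. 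The main obstacle, as indicated, will be justifying cleanly that adequacy passes through the branched covering $\phi_{m,d}$ in both directions; the subtlety is concentrated at the component containing the branch point $z_0$ (equivalently, the first puncture of $D_{d+1}$).
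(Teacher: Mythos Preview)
Your argument for $\psi_4'$ and $\psi_6'$ is essentially the paper's own proof: show that $\tilde\C$ is an adequate reduction system (hence $\R(\psi(\alpha))\subseteq\tilde\C$), then use $\mu_{m,d}$-invariance of $\R(\psi(\alpha))$ to push it down through $\phi_{m,d}$ to an adequate reduction system $\C^*\subseteq\C$ of $\alpha$, forcing $\C\subseteq\C^*$ and hence equality. The paper states the adequacy steps by citing Lemma~\ref{lem:dyn}(i),(ii), whereas you spell out the covering of components; both amount to the same thing, and your worry about the branch point is exactly the content of those parts of Lemma~\ref{lem:dyn}.

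The one genuine soft spot is your treatment of $\psi_5'$. The assertion that, for an arbitrary $1$-pure braid $\gamma$, the canonical reduction system of $\nu(\gamma)$ is obtained from $\R(\gamma)$ by forgetting the first puncture and discarding inessential curves is \emph{false in general}: filling in a puncture can turn a pseudo-Anosov piece into a reducible one (if the puncture sits at a $1$-pronged singularity of the invariant foliations), creating new reduction curves not visible in $\R(\gamma)$. So your step~2 for $\psi_5'$ is not a valid general principle. The paper avoids this by saying the $\psi_5'$ case is ``similar'': one reruns the same covering argument with the central puncture removed, using that $\nu\circ\varphi$ factors through an analogous branched/unbranched cover, and Corollary~\ref{cor:dyn} guarantees Nielsen--Thurston types still match on every component. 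In your write-up you should replace the general claim about $\nu$ with this direct argument; the rest of your $\psi_5'$ paragraph (no curve of $\tilde\C$ becomes inessential) is then the correct observation that the correspondence is a bijection on curves.
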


\begin{proof}\def\D{\mathcal D}
We prove the corollary only for the case of $\psi_4'$.
The other cases are similar.
Recall that the canonical reduction system of a braid is
the smallest adequate reduction system of it.

Let $\C\subset D_{d+1}$ be the canonical reduction system of $\alpha\in B_{d+1,1}$.
Following the construction in Lemma~\ref{lem:dyn},
let $\tilde \C=\phi_{m,d}^{-1}(\C)$ and $\D=\eta_{m,d}(\tilde \C)$.

From the proof of Lemma~\ref{lem:dyn}, $\D$ is a reduction system
of $\psi_4'(\alpha)$.
Because $\C$ is adequate, $\D$ is adequate by Lemma~\ref{lem:dyn}~(i) and (ii).
Hence there is a subset $\D'$ of $\D$
which is the canonical reduction system of $\psi_4'(\alpha)$.
Let $\tilde \C'=\eta_{m,d}^{-1}(\D')$ and $\C'=\phi_{m,d}(\tilde \C')$.
Note that $\tilde\C'\subset\tilde \C$ and $\C'\subset \C$.
$$
\xymatrix{
D_n^{(m)} \ar[d]^{\phi_{m,d}} \ar[r]^{\eta_{m,d}} & D_n\\
D_{d+1}
}
\qquad
\xymatrix{
\tilde \C'\subset \tilde \C
\ar[d]^{\phi_{m,d}}
\ar[r]^{\eta_{m,d}} &
\D'\subset \D\\
\C'\subset \C
}
$$

We claim that $\tilde \C'=\phi_{m,d}^{-1}(\C')$ and that $\C'$ is a reduction
system of $\alpha$.

Since $\D'$ is the canonical reduction system of $\psi_4'(\alpha)$
and since $\psi_4'(\alpha)$ commutes with $\mu_{m,d}$,
$$
\mu_{m,d}*\D'
=\mu_{m,d}*\R(\psi_4'(\alpha))
=\R(\mu_{m,d}\,\psi_4'(\alpha)\,\mu_{m,d}^{-1})
=\R(\psi_4'(\alpha))
=\D'.
$$
Because the diffeomorphism $\eta_{m,d}\circ\rho_m\circ \eta_{m,d}^{-1}$
represents $\mu_{m,d}\bmod \Delta^2$,
the above formula implies that $\rho_m(\tilde \C')=\tilde \C'$,
that is, if $\tilde\C'$ contains a component of $\tilde\C$ then it contains
all the components of $\tilde\C$ in the $\rho_m$-orbit of this component.
Since the covering transformation group of $\phi_{m,d}$ is generated by $\rho_m$,
$$
\tilde \C'=\phi_{m,d}^{-1}(\C')
\quad\mbox{and hence}\quad
\D'=\eta_{m,d}(\phi_{m,d}^{-1}(\C')).
$$

Let $f:D_{d+1}\to D_{d+1}$ be a diffeomorphism representing $\alpha$
such that $f(\C)=\C$.
Let $\tilde f$ be a lift of $f$ with respect to $\phi_{m,d}$.
Then $\tilde f(\tilde\C)=\tilde\C$ because $\tilde \C=\phi_{m,d}^{-1}(\C)$.
The diffeomorphism $\eta_{m,d}\circ\tilde f\circ \eta_{m,d}^{-1}$ represents
the braid $\psi_4'(\alpha)$.
Since $\D'=\eta_{m,d}(\tilde \C')$ is a reduction system of $\psi_4'(\alpha)$,
one has $\tilde f(\tilde\C')=\tilde\C'$.
Therefore $f(\C')=\C'$ because $\tilde \C'=\phi_{m,d}^{-1}(\C')$ and
$\tilde f$ is a lift of $f$ with respect to $\phi_{m,d}$.
Hence $\C'$ is a reduction system of $\alpha$.

\medskip

Since $\D'$ is adequate,
the reduction system $\C'$ is also adequate by Lemma~\ref{lem:dyn}~(i) and (ii).
Because $\C$ is the canonical reduction system of $\alpha$,
it is the smallest adequate reduction system of $\alpha$, hence $\C\subset \C'$.
Because $\C'\subset \C$, this implies $\C=\C'$ and hence $\D=\D'$.

So far, we have shown that
if $\C$ is the canonical reduction system of $\alpha$
then $\eta_{m,d}(\phi_{m,d}^{-1}(\C))$
is the canonical reduction system of $\psi_4'(\alpha)$.
If $\C''$ is the collection of the outermost components of $\C$,
then $\eta_{m,d}(\phi_{m,d}^{-1}(\C''))$ is the collection of the outermost
components of $\eta_{m,d}(\phi_{m,d}^{-1}(\C))$,
hence $\Rext(\psi_4'(\alpha))=\eta_{m,d}(\phi_{m,d}^{-1}(\Rext(\alpha)))$.
\end{proof}

We remark that the above argument actually shows that
there is a one-to-one correspondence
$$
\{\mbox{curve systems $\C\subset D_{d+1}$}\}
\stackrel{1-1}{\longleftrightarrow}
\{\mbox{curve systems $\D\subset D_{md+1}$ such that $\mu_{m,d}*\D=\D$}\}
$$
sending $\C\subset D_{d+1}$ to $\eta_{m,d}(\phi_{m,d}^{-1}(\C))$.
This correspondence satisfies the property that
$\C$ is a reduction system (resp.~an adequate reduction system,
the canonical reduction system) of $\alpha\in B_{d+1}$
if and only if
$\eta_{m,d}(\phi_{m,d}^{-1}(\C))$ is a reduction system
(resp.~an adequate reduction system,
the canonical reduction system) of $\psi_4'(\alpha)$.

\medskip

For $1\le i\le d$ and $1\le j\le m$,
let $x_0$ and $x_{i,j}$ be the integers such that
$x_0=1$ and $x_{i,j}=(i-1)m+j+1$.
In other words,
$(1,2,\ldots,md+1)
=(x_0,x_{1,1},\ldots,x_{1,m},
\ldots,x_{d,1},\ldots,x_{d,m})$.
Note that $\eta_{m,d}(z_0)=x_0$ and
$\eta_{m,d}(z_{i,j})=x_{i,j}$ for $1\le i\le d$ and $1\le j\le m$.
The induced permutation of $\mu_{m,d}$ is
$$
\pi_{\mu_{m,d}}=(x_0)(x_{1,m},\ldots, x_{1,2},x_{1,1})
\cdots(x_{d,m},\ldots, x_{d,2},x_{d,1}).
$$

\begin{lemma}
\label{lem:per}
Let $m\ge 2$ and $d\ge 1$, and let
$\theta$ be a permutation on $\{1,\ldots,md+1\}$.
Then the following conditions are equivalent.
\begin{itemize}
\item[(i)]
$\theta$ is an induced permutation of an element of $Z(\mu_{m,d})$.
\item[(ii)]
$\theta$ commutes with the induced permutation of $\mu_{m,d}$
\item[(iii)]
$\theta$ is such that
$\theta(x_0)=x_0$ and
$\theta(x_{i,j})=x_{k_i, j-\ell_i}$ for $1\le i\le d$ and $1\le j\le m$,
where $0\le \ell_i< m$ and $k_1,\ldots, k_d$ is a rearrangement
of $1,\ldots,d$.
(Hereafter we regard the second index $j-\ell_i$ of $(k_i, j-\ell_i)$
as being taken modulo $m$.)
\end{itemize}
\end{lemma}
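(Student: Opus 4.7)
The plan is to prove the implications in the cycle (i) $\Rightarrow$ (ii) $\Leftrightarrow$ (iii) $\Rightarrow$ (i), with the last implication being the substantive one.

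For (i) $\Rightarrow$ (ii): if $\alpha\in Z(\mu_{m,d})$, then $\alpha\mu_{m,d}=\mu_{m,d}\alpha$, and passing to the induced permutations via the group homomorphism $B_n\to S_n$ gives $\theta\,\pi_{\mu_{m,d}}=\pi_{\mu_{m,d}}\,\theta$. This step requires nothing beyond the definitions.

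For (ii) $\Leftrightarrow$ (iii): this is a purely combinatorial fact about centralizers in symmetric groups. The cycle decomposition of $\pi_{\mu_{m,d}}$ consists of the single fixed point $\{x_0\}$ together with $d$ disjoint cycles of length $m$, namely the layers $\{x_{i,1},\ldots,x_{i,m}\}$ on which $\pi_{\mu_{m,d}}$ acts by $x_{i,j}\mapsto x_{i,j-1}$. A permutation $\theta$ commutes with $\pi_{\mu_{m,d}}$ if and only if it permutes these cycles among themselves, matching lengths, and intertwines the cyclic action on each. Since $\{x_0\}$ is the unique cycle of length $1$, one must have $\theta(x_0)=x_0$; the $d$ cycles of length $m$ are permuted by a permutation $k_1,\ldots,k_d$ of $1,\ldots,d$, and on each layer $\theta$ differs from an intertwiner of the cyclic shifts by a power of that shift, giving the parameters $\ell_i\in\mathbb{Z}/m\mathbb{Z}$. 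A direct check $\theta\,\pi_{\mu_{m,d}}(x_{i,j})=\pi_{\mu_{m,d}}\,\theta(x_{i,j})$ confirms the formula in (iii).

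For (iii) $\Rightarrow$ (i): the strategy is to construct, for each $\theta$ satisfying (iii), an explicit element of $Z(\mu_{m,d})$ inducing $\theta$, using the lifting description of $\varphi:B_{d+1,1}\to Z(\mu_{m,d})$. First, choose any $\bar\alpha\in B_{d+1,1}$ whose induced permutation on $D_{d+1}$ sends $1\mapsto 1$ and $i+1\mapsto k_i+1$. The braid $\varphi(\bar\alpha)\in Z(\mu_{m,d})$ lies in $H$ (by (i)$\Rightarrow$(ii)$\Leftrightarrow$(iii)) with the correct layer permutation $(k_i)$ but possibly incorrect shifts $(\ell_i^{(0)})$. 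To adjust the shifts, one uses pure braids: for each $i$ there is a 1-pure braid in $B_{d+1,1}$ whose $(i+1)$-th strand winds once around strand $1$ (for example, a suitable conjugate of $\sigma_1^2$), and under $\varphi$ the lift via $\phi_{m,d}$ translates this winding into a cyclic shift of layer $k_i$ by one position, because a loop around $z_0$ carries nontrivial monodromy in the deck group $\langle\rho_m\rangle$, whereas loops around the other punctures $z_{i,j}$ are trivial in monodromy. Multiplying $\varphi(\bar\alpha)$ by an appropriate product of such elements produces an element of $Z(\mu_{m,d})$ whose induced permutation is exactly $\theta$.

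The main obstacle will be the last step: verifying that the single-layer shifts $\ell_i$ can be independently adjusted through the winding-monodromy dictionary. Equivalently, one must confirm that the composition of the permutation map $Z(\mu_{m,d})\to S_{md+1}$ with $\varphi$ surjects onto $H\cong(\mathbb{Z}/m\mathbb{Z})\wr S_d$, with the $S_d$ factor controlled by the permutation in $B_{d+1,1}$ and the $(\mathbb{Z}/m\mathbb{Z})^d$ factor controlled by the linking numbers of the strands with the first strand modulo $m$.
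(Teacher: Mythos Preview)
Your proof is correct, and the overall architecture (the easy implications (i)$\Rightarrow$(ii)$\Leftrightarrow$(iii), followed by the substantive step of realizing each such permutation by an element of $Z(\mu_{m,d})$) matches the paper's. The difference is in how the realization step is carried out.

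The paper argues more directly: it observes that the centralizer of $\pi_{\mu_{m,d}}$ in $S_{md+1}$ is generated by the adjacent cycle-swaps $\theta_i$ (interchanging layers $i$ and $i{+}1$) and the single-layer shifts $\tau_i$, and then exhibits for each generator an explicit $\rho_m$-equivariant diffeomorphism of $D_n^{(m)}$ (shown in a picture) whose conjugate by $\eta_{m,d}$ lies in $Z(\mu_{m,d})$ and induces the given permutation. Your route instead packages the same geometry through the isomorphism $\varphi:B_{d+1,1}\to Z(\mu_{m,d})$: the $S_d$ part comes from the permutation of strands $2,\ldots,d{+}1$ downstairs, and the $(\mathbb{Z}/m)^d$ part comes from the monodromy of $\phi_{m,d}$, i.e., from the mod-$m$ linking numbers of each strand with strand~1. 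This is a nice conceptual explanation of \emph{why} the equivariant diffeomorphisms in the paper's pictures exist; in effect your $\varphi$ applied to a swap of strands $i{+}1,i{+}2$ produces the paper's $\theta_i$, and $\varphi$ applied to the pure braid $A_{1,i+1}$ produces the paper's $\tau_i$. The paper's version is shorter and entirely pictorial; yours identifies the wreath-product structure with the linking-number data, which is a cleaner statement but requires checking the monodromy computation (that a small loop around the first puncture downstairs has deck-group monodromy a generator of $\langle\rho_m\rangle$, while loops around the other punctures have trivial monodromy).
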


\begin{proof}
(i) $\Rightarrow$ (ii) $\Leftrightarrow$ (iii) is clear.
Let us show that (ii) $\Rightarrow$ (i).
Suppose that (ii) is true.

The centralizer of $\pi_{\mu_{m,d}}$ is generated by two types of
permutations $\theta_i$ for $1\le i\le d-1$ and
$\tau_i$ for $1\le i\le d$, where
\begin{itemize}
\item[(i)]
the permutation $\theta_i$ interchanges
the $i$-th and the $(i+1)$-st cycles of $\pi_{\mu_{m,d}}$, more precisely,
$\theta_i(x_{i,k})=x_{i+1, k}$,
$\theta_i(x_{i+1, k})=x_{i,k}$ and
$\theta_i(x_{j,k})=x_{j,k}$ for $1\le k\le m$ and $j\not\in\{i,i+1\}$;
\item[(ii)]
the permutation
$\tau_i$ shifts the $i$-th cycle of $\pi_{\mu_{m,d}}$ by one
in a way that $\tau_i(x_{i,k})=x_{i, k-1}$ and
$\tau_i(x_{j,k})=x_{j,k}$ for $1\le k\le m$ and $j\ne i$.
\end{itemize}
Notice that $\theta_i$ and $\tau_i$
are induced permutations of elements of $Z(\mu_{m,d})$.
Let the punctures be located as in Figure~\ref{fig:per}~(a)
so that $\mu_{m,d}$ corresponds to the clockwise $2\pi/m$-rotation.
Figure~\ref{fig:per}~(b) and (c) illustrate
diffeomorphisms whose induced permutations
interchange two cycles or shift a cycle by one.
\end{proof}

\begin{figure}
\begin{tabular}{ccccc}
\includegraphics[scale=.65]{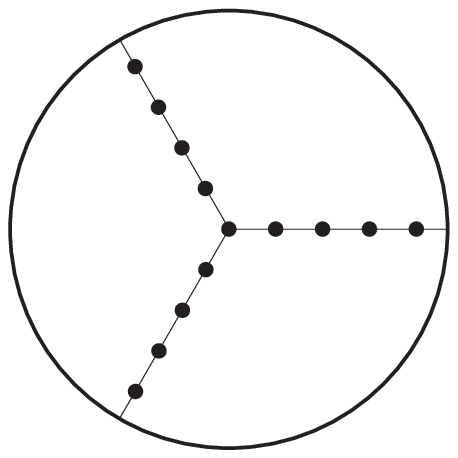} &\mbox{}\quad\mbox{}&
\includegraphics[scale=.65]{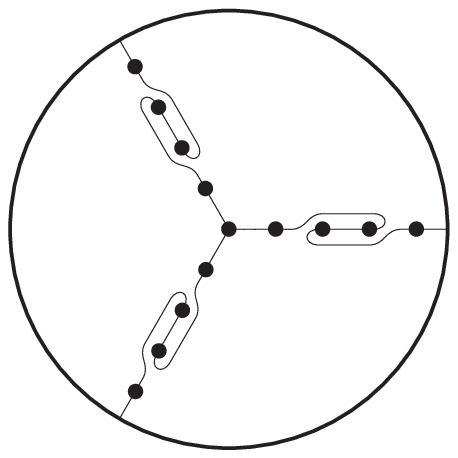} &\mbox{}\quad\mbox{}&
\includegraphics[scale=.65]{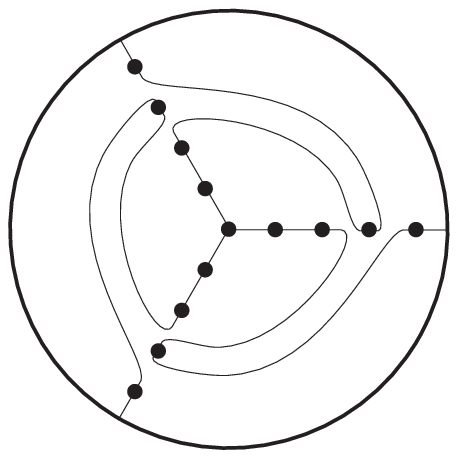} \\
(a)  &&
(b)  &&
(c)
\end{tabular}
\caption{(a) shows the punctured disk $D_{13}^{(3)}$.
(b) and (c) illustrate diffeomorphisms inducing $\theta_2$ and $\tau_3$, respectively.
}
\label{fig:per}
\end{figure}

\subsection{Reducible braids in the centralizer of a periodic braid}
\label{sec:red-sta}
Let $m$ and $d$ be integers with $m\ge 2$ and $d\ge 0$, and let $n=md+1$.
Let $\d=(d_0+1,d_1,\ldots,d_r)$ be a composition of $d+1$.
We define a composition $L_m(\mathbf d)$ of $n$ as
$$
L_m(\mathbf d)
=(md_0+1,\underbrace{d_1,\ldots,d_1}_{m},\ldots,
\underbrace{d_r,\ldots,d_r}_{m}).
$$
For example,
if $\mathbf d=(2,2,1)$ then $L_3(\mathbf d)=(4,2,2,2,1,1,1)$, and
if $\mathbf d=(3,2,1)$ then $L_3(\mathbf d)=(7,2,2,2,1,1,1)$.
Let $\C_{m,\d}$ denote the curve system $\phi_{m,d}^{-1}(\C_\d)$ in $D_n^{(m)}$.
See Figure~\ref{fig:cur-ray}.
\begin{figure}
$$
\xymatrix{
\includegraphics[scale=.6]{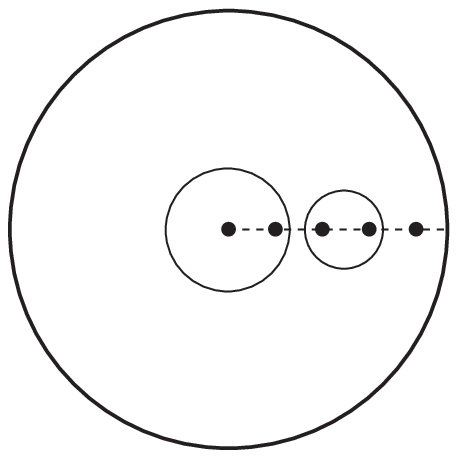}
&
\includegraphics[scale=.6]{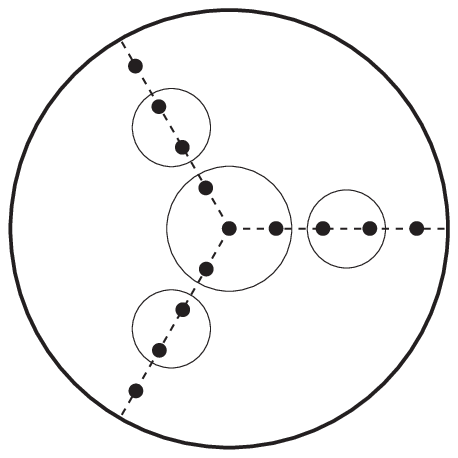}
    \ar[r]_{\eta_{m,\mathbf  d}}
    \ar[l]^{\phi_{m,d}}
&
\includegraphics[scale=.6]{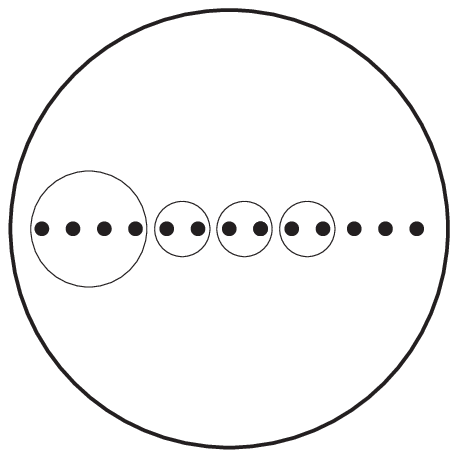}
}
$$
\hspace{6mm}$\C_\d$ in $D_{d+1}$ \hspace{20mm}
$\C_{m, \d}$ in $D_{n}^{(m)}$ \hspace{20mm}
$\C_{L_m(\d)}$ in $D_{n}$
\caption{The curve system $\C_\d$ in $D_{d+1}$ is lifted to
$\C_{m, \d}$ in $D_{n}^{(m)}$,
and then mapped to $\C_{L_m(\d)}$ in $D_{n}$.
Here $n=13$, $m=3$, $d=4$ and $\d=(2,2,1)$.}
\label{fig:cur-ray}
\end{figure}

We construct a diffeomorphism
$$
\eta_{m,\d}: D_n^{(m)}\to D_n
$$
similarly to $\eta_{m,d}$ such that
it fixes the boundary $\partial D_n^{(m)}=\partial D_n$ pointwise and
sends $\C_{m,\d}$ in $D_n^{(m)}$ to $\C_{L_m(\d)}$ in $D_n$.
First, we cut the punctured disk $D_n^{(m)}$ (resp.\ $D_n$)
by $r$ round circles disjoint from $\C_{m,\d}$ (resp.\ $\C_{L_m(\d)}$)
into one $(md_0+1)$-punctured disk and
$r$ annuli such that each annulus contains $m$ punctures
or $m$ components of $\C_{m,\d}$ (resp.\ $\C_{L_m(\d)}$)
as in Figure~\ref{fig:diffeo2}~(a).
Then, on each annulus, define $\eta_{m,\d}$
as in Figure~\ref{fig:diffeo2}~(b),
and on the $(md_0+1)$-punctured disk, define $\eta_{m,\d}$ as $\eta_{m,d_0}$.
Such a diffeomorphism $\eta_{m,\d}$
is unique up to isotopy.

\begin{figure}
\begin{tabular}{cc}
$\xymatrix{
\includegraphics[scale=.6]{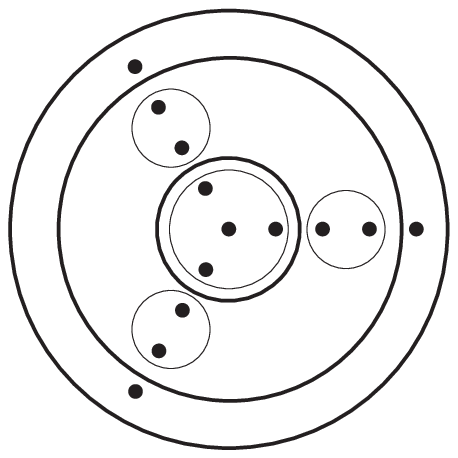}\ar[r]^{\eta_{m,\d}}&
\includegraphics[scale=.6]{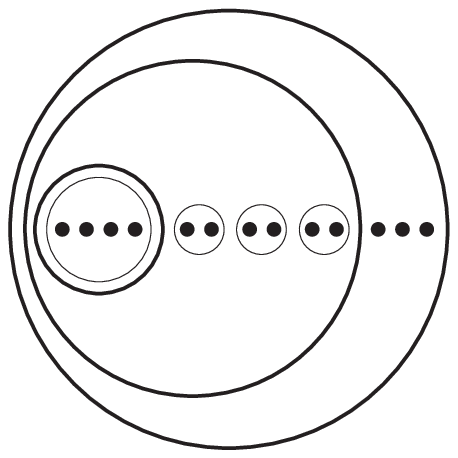}}$ &
$\xymatrix{
\includegraphics[scale=.6]{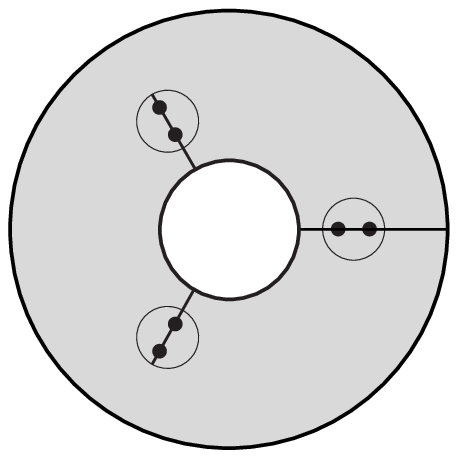}\ar[r]^{\eta_{m,\d}}&
\includegraphics[scale=.6]{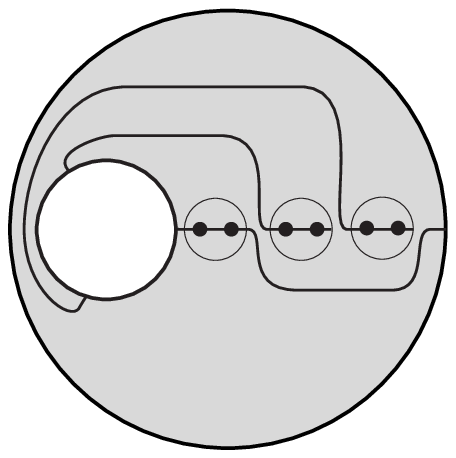}}$\\
(a) & (b)
\end{tabular}
\caption{The diffeomorphism $\eta_{m,\d}: D_{n}^{(m)} \to D_{n}$
where $n=13$, $m=3$, $d=4$ and $\d=(2,2,1)$.}
\label{fig:diffeo2}
\end{figure}

\begin{definition}\label{def:composition}
Let $n=md+1$ with $m\ge 2$ and $d\ge 1$.
Let $\d=(d_0+1,d_1,\ldots,d_r)$ be a composition of $d+1$ and $\n=L_m(\d)$.
We define an $n$-braid $\mu_{m,\mathbf d}$ as
$$
\mu_{m,\mathbf d}
=\myangle{\mu_{m,r}}_\n
(\mu_{m,d_0}\oplus(\underbrace{\Delta_{(d_1)}^2 \oplus 1 \oplus\cdots\oplus 1}_m )
\oplus\cdots\oplus
(\underbrace{\Delta_{(d_r)}^2 \oplus 1 \oplus\cdots\oplus 1}_m ))_\n.
$$
\end{definition}

See Figure~\ref{fig:mu-red}~(b) for $\mu_{m,\d}$ with $m=3$ and $\d=(2,2,1)$
and compare it with $\mu_{m,r}$ in Figure~\ref{fig:mu-red}~(a).
$\mu_{m,\d}*\C_{L_m(\d)}=\C_{L_m(\d)}$
and $\mu_{m,\d}$ is conjugate to $\epsilon_{(n)}^d$.
Observe that $\alpha\in B_n$ belongs to $Z(\mu_{m,\mathbf d})$
if and only if it is represented
by $\eta_{m,\mathbf d}\circ h\circ\eta_{m,\mathbf d}^{-1}$
for a $\rho_m$-equivariant diffeomorphism $h:D_n^{(m)}\to D_n^{(m)}$
which fixes the boundary pointwise.

\begin{figure}
\begin{tabular}{*5c}
\includegraphics[scale=.6]{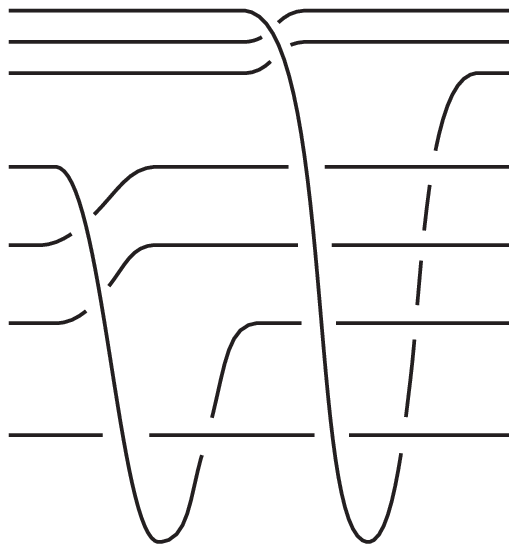} &~&
\includegraphics[scale=.6]{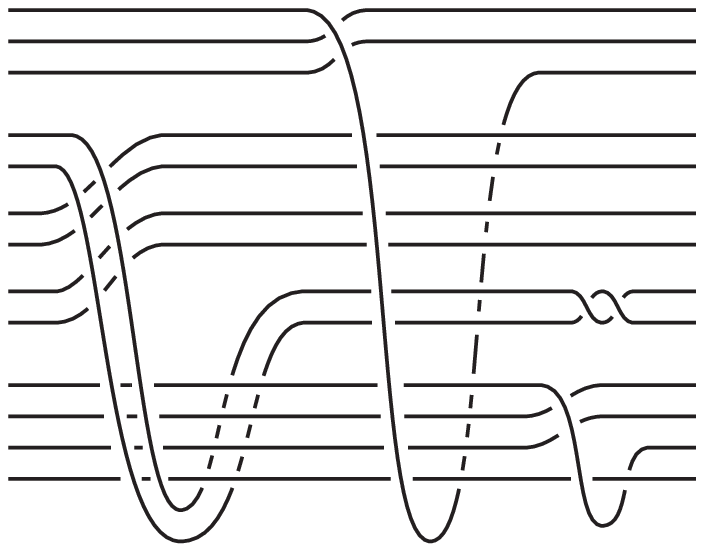} &~&
\includegraphics[scale=.6]{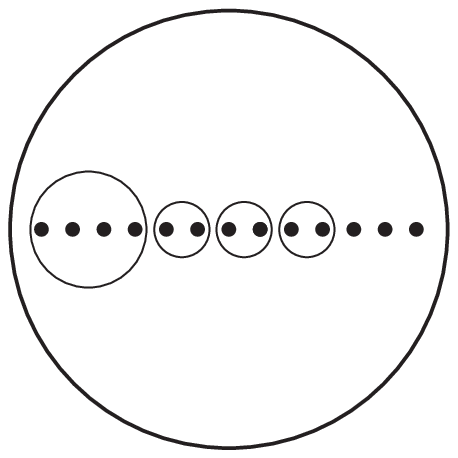} \\
(a) $\mu_{3,2}$ &&
(b) $\mu_{3,\d}$ &&
(c) $\C_{L_3(\d)}$
\end{tabular}
\caption{The braid $\mu_{3,\d}$ with $\d=(2,2,1)$ in (b)
is obtained from the braid $\mu_{3,2}$ in (a) by
taking four parallel copies of the first strand
and two parallel copies of the next three strands,
and then by concatenating some interior braids.
This braid has the standard reduction system
$\C_{L_3(\d)}=\C_{(4,2,2,2,1,1,1)}$ as in (c).}
\label{fig:mu-red}
\end{figure}

\begin{lemma}\label{lem:cent}
Let $n=md+1$ with $m\ge 2$ and $d\ge 1$.
Let $\d=(d_0+1,d_1,\ldots,d_r)$ be a composition of $d+1$, and let $\n=L_m(\d)$.
Let $\alpha$ be an $n$-braid with $\alpha*\C_\n=\C_\n$.
Then, $\alpha\in Z(\mu_{m,\d})$ if and only if\/
$\alpha$ is of the form
$$
\alpha=\myangle{\hat{\alpha}}_\n(\alpha_0\oplus
{\textstyle \bigoplus_{i=1}^r }
(\underbrace{\alpha_i\Delta_{(d_i)}^2\oplus\cdots\oplus\alpha_i\Delta_{(d_i)}^2}_{\ell_i}
\oplus \underbrace{\alpha_i\oplus\cdots\oplus\alpha_i}_{m-\ell_i})
)_\n
$$
such that $\hat{\alpha}\in Z(\mu_{m,r})$,
$\alpha_0\in Z(\mu_{m,d_0})$
and $d_{k_i}=d_i$ for $1\le i\le r$,
where $k_i$'s and $\ell_i$'s are integers such that
$\pi_{\hat\alpha}(x_{i,j})=x_{k_i, j-\ell_i}$ as in Lemma~\ref{lem:per}.
\end{lemma}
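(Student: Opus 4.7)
The plan is to expand $\alpha\mu_{m,\d}$ and $\mu_{m,\d}\alpha$ using the decomposition machinery of Lemma~\ref{lem:decom}, then compare the two sides by uniqueness. Since $\alpha*\C_\n=\C_\n$ and $\n=L_m(\d)$ has the constant value $d_i$ in every slot of its $i$-th block (and $md_0+1$ in slot $0$), the permutation $\pi_{\hat\alpha}$ is forced to preserve $\n$; by Lemma~\ref{lem:decom}(ii) we have a unique expression
\[
\alpha=\myangle{\hat\alpha}_\n\Bigl(\alpha_0\oplus{\textstyle\bigoplus_{i=1}^{r}}(\alpha_{i,1}\oplus\cdots\oplus\alpha_{i,m})\Bigr)_\n.
\]
By its very definition, $\mu_{m,\d}$ is presented in the same form with exterior $\mu_{m,r}$ (which also preserves $\n$) and with interior braids $M_0=\mu_{m,d_0}$, $M_{i,1}=\Delta_{(d_i)}^2$, and $M_{i,j}=1$ for $j\ge 2$.

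First I would record a general product rule: if $\beta_1$ and $\beta_2$ are decomposed over $\n$ with exterior braids preserving $\n$, then combining items (iii)--(vi) of Lemma~\ref{lem:decom} gives
\[
\beta_1\beta_2=\myangle{\hat\beta_1\hat\beta_2}_\n\bigl(B_{1,\pi_{\hat\beta_2}(\cdot)}\cdot B_{2,\cdot}\bigr)_\n.
\]
Specialising this to $\alpha\mu_{m,\d}$ and to $\mu_{m,\d}\alpha$, and using $\pi_{\mu_{m,r}}(x_0)=x_0$ and $\pi_{\mu_{m,r}}(x_{i,j})=x_{i,\,j-1\bmod m}$, yields explicit expressions for both products in decomposed form.

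By the uniqueness in Lemma~\ref{lem:decom}(i), the identity $\alpha\mu_{m,\d}=\mu_{m,\d}\alpha$ is then equivalent to three groups of conditions: (a) the exterior equality $\hat\alpha\mu_{m,r}=\mu_{m,r}\hat\alpha$, i.e., $\hat\alpha\in Z(\mu_{m,r})$, whence Lemma~\ref{lem:per} supplies indices $k_1,\ldots,k_r$ and $\ell_1,\ldots,\ell_r$ with $\pi_{\hat\alpha}(x_0)=x_0$ and $\pi_{\hat\alpha}(x_{i,j})=x_{k_i,\,j-\ell_i}$; (b) the interior equality at slot $0$, which gives $\alpha_0\in Z(\mu_{m,d_0})$; (c) for each $i$ and each $j\in\{1,\ldots,m\}$, the interior equality
\[
\alpha_{i,\,j-1\bmod m}\,M_{i,j}=M_{k_i,\,j-\ell_i\bmod m}\,\alpha_{i,j},
\]
in which the left-hand factor is nontrivial only at $j=1$ and the right-hand factor is nontrivial only when $j\equiv 1+\ell_i\pmod m$.

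The last step is to solve the cyclic system in (c). Setting $\alpha_i:=\alpha_{i,\ell_i+1}$, all equations with trivial factors on both sides read $\alpha_{i,j-1}=\alpha_{i,j}$ and chain around the cycle to give $\alpha_{i,j}=\alpha_i$ for $\ell_i<j\le m$ and $\alpha_{i,j}=\Delta_{(d_i)}^2\alpha_i=\alpha_i\Delta_{(d_i)}^2$ for $1\le j\le \ell_i$, which is exactly the tuple in the statement. The remaining (closing) equation amounts to $\alpha_i\Delta_{(d_i)}^2=\Delta_{(d_{k_i})}^2\alpha_i$; comparing the braid groups in which the two $\Delta^2$ factors live forces $d_i=d_{k_i}$, after which the identity holds automatically by centrality of $\Delta^2$. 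The converse, that any $\alpha$ of the stated form centralises $\mu_{m,\d}$, is a direct verification using the same product rule. The main obstacle is the bookkeeping in (c): one must keep careful track, for each $(i,j)$, of which of the two sides carries the nontrivial $\Delta^2$ factor (both sides at $j=1$ when $\ell_i=0$; different slots when $\ell_i\ge 1$) and chain the resulting $m$ equations cyclically. Once this is done, both the constraint $d_i=d_{k_i}$ and the prescribed tuple of interior braids drop out together.
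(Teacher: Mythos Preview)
Your proposal is correct and follows essentially the same route as the paper: write $\alpha$ in $\n$-decomposed form, expand both $\alpha\mu_{m,\d}$ and $\mu_{m,\d}\alpha$ via Lemma~\ref{lem:decom}, and compare componentwise by uniqueness to obtain $\hat\alpha\in Z(\mu_{m,r})$, $\alpha_0\in Z(\mu_{m,d_0})$, and the cyclic system for the $\alpha_{i,j}$. One small point of ordering: the paper deduces $d_{k_i}=d_i$ \emph{before} the interior computation, directly from $\hat\alpha*\n=\n$ (which you noted at the outset) together with the shape of $\pi_{\hat\alpha}$ from Lemma~\ref{lem:per}; this is needed for your equation~(c) even to be well-posed, since $M_{k_i,1}=\Delta_{(d_{k_i})}^2$ must land in $B_{d_i}$. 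Your closing-equation argument for $d_i=d_{k_i}$ is therefore redundant (and slightly circular), but the ingredient it relies on is already in your proof.
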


\begin{proof}
Since $\alpha*\C_\n=\C_\n$, $\alpha$ is of the form
$$
\alpha=\myangle{\hat{\alpha}}_\n
(\alpha_0\oplus
\underbrace{\alpha_{1,1}\oplus\cdots\oplus\alpha_{1,m}}_m
\oplus\cdots\oplus
\underbrace{\alpha_{r,1}\oplus\cdots\oplus\alpha_{r,m}}_m )_\n.
$$

Suppose that $\alpha\in Z(\mu_{m,\d})$.
Since $\alpha\ast\C_\n =\mu_{m,\d}\ast\C_\n = \C_\n$, we have
$$
\hat\alpha\mu_{m,r}=\Ext_{\n}(\alpha\mu_{m,\d})
=\Ext_{\n}(\mu_{m,\d}\alpha)=\mu_{m,r}\hat\alpha,
$$
hence $\hat\alpha\in Z(\mu_{m,r})$.
By Lemma~\ref{lem:per}, $\pi_{\hat\alpha}$ is of the form
$\pi_{\hat\alpha}(x_0)=x_0$ and
$\pi_{\hat\alpha}(x_{i,j})=x_{k_i, j-\ell_i}$,
where $1\le k_i\le r$ and $0\le \ell_i<m$.
Since $\pi_{\hat\alpha}(x_{i,j})=x_{k_i, j-\ell_i}$ and
$$
\n = L_m(\d)=(md_0+1,\underbrace{d_1,\ldots,d_1}_{m},\ldots,
\underbrace{d_r,\ldots,d_r}_{m}),
$$
one has
$$
\hat\alpha^{-1}*\n =
(md_0+1,\underbrace{d_{k_1},\ldots,d_{k_1}}_{m},\ldots,
\underbrace{d_{k_r},\ldots,d_{k_r}}_{m}).
$$
Therefore $d_{k_i}=d_i$ for $1\le i\le r$
because $\hat\alpha*\n=\n$.

Now we compute $\alpha\mu_{m,\d}$ and $\mu_{m,\d}\alpha$ straightforwardly:
\begin{eqnarray*}
\alpha\cdot \mu_{m,\d}
&=& \myangle{\hat{\alpha}}_\n (\alpha_0\oplus
{\textstyle \bigoplus_{i=1}^r}
(\alpha_{i,1}\oplus\cdots\oplus\alpha_{i,m}) )_\n \\
&& \mbox{}\quad \cdot
\myangle{\mu_{m,r}}_\n (\mu_{m,d_0}\oplus
{\textstyle \bigoplus_{i=1}^r}
(\Delta_{(d_i)}^2\oplus 1\oplus\cdots\oplus 1))_\n \\
&=&
\myangle{\hat{\alpha}}_\n \myangle{\mu_{m,r}}_\n
\cdot
(\alpha_0\oplus
{\textstyle \bigoplus_{i=1}^r}
(\alpha_{i,m}\oplus\alpha_{i,1}\oplus\cdots\oplus\alpha_{i,m-1}) )_\n\\
&& \mbox{}\quad \cdot
(\mu_{m,d_0}\oplus
{\textstyle \bigoplus_{i=1}^r}
(\Delta_{(d_i)}^2\oplus 1\oplus\cdots\oplus 1 ) )_\n \\
&=&
\myangle{\hat{\alpha}\mu_{m,r}}_\n
(\alpha_0\mu_{m,d_0}\oplus
{\textstyle \bigoplus_{i=1}^r}
(\alpha_{i,m}\Delta_{(d_i)}^2\oplus\alpha_{i,1}\oplus\cdots\oplus
\alpha_{i,m-1} ))_\n,\\
\mu_{m,\d}\cdot \alpha
&=& \myangle{\mu_{m,r}}_\n (\mu_{m,d_0}\oplus
{\textstyle \bigoplus_{i=1}^r}
(\Delta_{(d_i)}^2\oplus 1\oplus\cdots\oplus 1 ) )_\n\\
&&\mbox{}\qquad \cdot \myangle{\hat{\alpha}}_\n (\alpha_0\oplus
{\textstyle \bigoplus_{i=1}^r}
(\alpha_{i,1}\oplus\cdots\oplus\alpha_{i,m} )  )_\n \\
&=& \myangle{\mu_{m,r}}_\n \myangle{\hat{\alpha}}_\n \cdot (\mu_{m,d_0}\oplus
{\textstyle \bigoplus_{i=1}^r}
(\underbrace{1\oplus\cdots\oplus1}_{\ell_i}\oplus\Delta_{(d_i)}^2\oplus
 \underbrace{1\oplus\cdots\oplus1}_{m-\ell_i-1} ))_\n \\
&&\mbox{}\qquad \cdot (\alpha_0\oplus
{\textstyle \bigoplus_{i=1}^r}
(\alpha_{i,1}\oplus\cdots\oplus\alpha_{i,m} ) )_\n \\
&=&
\myangle{\mu_{m,r} \hat{\alpha}}_\n
(\mu_{m,d_0}\alpha_0\oplus\\
&&\mbox{}\qquad
{\textstyle \bigoplus_{i=1}^r}
(\alpha_{i,1}\oplus\cdots\oplus\alpha_{i,\ell_i}\oplus
    \Delta_{(d_i)}^2\alpha_{i,\ell_i+1}\oplus\alpha_{i,\ell_i+2}
    \oplus\cdots\oplus\alpha_{i,m}))_\n.
\end{eqnarray*}
Comparing the above two formulae, we have
$\alpha_0\in Z(\mu_{m,d_0})$.
In addition, for $1\le i\le r$,
\begin{eqnarray*}
&& \alpha_{i,m}\Delta_{(d_i)}^{2} = \alpha_{i,1} = \alpha_{i,2} = \cdots = \alpha_{i,\ell_i} =
\Delta_{(d_i)}^{2}\alpha_{i,\ell_i +1}, \\
&& \alpha_{i,\ell_i +1} = \alpha_{i,\ell_i +2} = \cdots = \alpha_{i,m}.
\end{eqnarray*}
Let $\alpha_i =\alpha_{i,m}$, then $\alpha$ has the desired form.
The other direction is straightforward.
\end{proof}

\begin{corollary}\label{cor:cent}
Let $n=md+1$ with $m\ge 2$ and $d\ge 1$.
Let $\d=(d_0+1,d_1,\ldots,d_r)$ be a composition of $d+1$,
and let $\n=L_m(\d)$.
Let $\alpha$ be an $n$-braid with $\alpha*\C_\n=\C_\n$,
and let the $\n$-exterior braid of $\alpha$ be pure.
Then, $\alpha\in Z(\mu_{m,\d})$ if and only if
$\alpha$ is of the form
$$
\alpha=\myangle{\hat{\alpha}}_\n
(\alpha_0\oplus m\alpha_1\oplus\cdots\oplus m\alpha_r)_\n
$$
where $\hat{\alpha}\in Z(\mu_{m,r})$ and $\alpha_0\in Z(\mu_{m,d_0})$.
\end{corollary}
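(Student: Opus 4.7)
The corollary is essentially a direct specialization of Lemma~\ref{lem:cent} to the case where the exterior braid $\hat\alpha = \Ext_\n(\alpha)$ has trivial induced permutation. The plan is simply to feed the pure-exterior hypothesis into the general formula produced by Lemma~\ref{lem:cent} and watch it collapse.

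First, assuming $\alpha \in Z(\mu_{m,\d})$, I would invoke Lemma~\ref{lem:cent} to write
$$
\alpha=\myangle{\hat{\alpha}}_\n\Bigl(\alpha_0\oplus
{\textstyle \bigoplus_{i=1}^r }
(\underbrace{\alpha_i\Delta_{(d_i)}^2\oplus\cdots\oplus\alpha_i\Delta_{(d_i)}^2}_{\ell_i}
\oplus \underbrace{\alpha_i\oplus\cdots\oplus\alpha_i}_{m-\ell_i})
\Bigr)_\n,
$$
with $\hat\alpha\in Z(\mu_{m,r})$, $\alpha_0\in Z(\mu_{m,d_0})$, $d_{k_i}=d_i$, and where the integers $k_i,\ell_i$ are determined by $\pi_{\hat\alpha}(x_{i,j})=x_{k_i,j-\ell_i}$ as in Lemma~\ref{lem:per}. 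Since $\hat\alpha$ is pure, $\pi_{\hat\alpha}$ is the identity, so $x_{k_i,j-\ell_i}=x_{i,j}$ for every $i,j$; this forces $k_i=i$ and $\ell_i\equiv 0\pmod m$, hence $\ell_i=0$ for all $1\le i\le r$. The constraint $d_{k_i}=d_i$ then becomes vacuous.

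Substituting $\ell_i=0$ into the formula above, the first (empty) block disappears and the $i$-th bundle becomes $m$ identical copies of $\alpha_i$. Using the shorthand $m\alpha_i=\alpha_i\oplus\cdots\oplus\alpha_i$, we obtain exactly
$$
\alpha=\myangle{\hat{\alpha}}_\n(\alpha_0\oplus m\alpha_1\oplus\cdots\oplus m\alpha_r)_\n,
$$
with $\hat\alpha\in Z(\mu_{m,r})$ and $\alpha_0\in Z(\mu_{m,d_0})$, proving the forward implication. Conversely, any $\alpha$ of this form has pure exterior braid (because $\pi_{\hat\alpha}$, required to lie in $Z(\pi_{\mu_{m,r}})$, together with the ``$m$ equal copies'' pattern forces $\ell_i=0$ and $k_i=i$), and it fits into the general form of Lemma~\ref{lem:cent} with $\ell_i=0$ and $k_i=i$, so Lemma~\ref{lem:cent} gives $\alpha\in Z(\mu_{m,\d})$. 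There is no real obstacle here; the entire content is unpacking what ``$\pi_{\hat\alpha}=\mathrm{id}$'' means in the parametrization of Lemma~\ref{lem:per} and simplifying the resulting expression.
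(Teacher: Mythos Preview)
Your proof is correct and follows the same approach as the paper's: since $\hat\alpha$ is pure, Lemma~\ref{lem:per} forces $k_i=i$ and $\ell_i=0$, and Lemma~\ref{lem:cent} then collapses to the stated form. One small quibble: in the converse you write that ``any $\alpha$ of this form has pure exterior braid''; this is not a consequence of the displayed form (e.g.\ $\hat\alpha=\mu_{m,r}$ would fit), but it is already part of the standing hypothesis of the corollary, so the argument goes through unchanged.
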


\begin{proof}
Since $\Ext_\n(\alpha)$ is pure,
$k_i=i$ and $\ell_i=0$ for $i=1,\ldots,r$ in Lemma~\ref{lem:cent}.
\end{proof}

\begin{lemma}\label{lem:sta}
Let $n=md+1$ with $m\ge 2$ and $d\ge 1$.
Let $\C$ be an unnested curve system in $D_n$ such that $\mu_{m,d}*\C=\C$.
Then there exist $\zeta\in Z(\mu_{m,d})$ and a composition
$\d=(d_0+1,d_1,\ldots,d_r)$ of\/ $d+1$ with $d_1\ge \cdots \ge d_r$ such that
$$
(\chi\zeta)*\C=\C_{L_m(\mathbf d)},
$$
where $\chi$ is an $n$-braid represented by $\eta_{m,\d}\circ\eta_{m,d}^{-1}$.
\end{lemma}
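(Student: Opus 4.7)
The plan is to push the $\mu_{m,d}$-invariant curve system $\C$ down to a curve system $\C'$ in $D_{d+1}$, standardize $\C'$ by a diffeomorphism of $D_{d+1}$ fixing the first puncture, and pull the standardization back up to $D_n$ to produce the required $\zeta\in Z(\mu_{m,d})$.

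I would first set $\tilde\C:=\eta_{m,d}^{-1}(\C)\subset D_n^{(m)}$. Because $\eta_{m,d}\circ\rho_m\circ\eta_{m,d}^{-1}$ represents $\mu_{m,d}$ modulo $\Delta^2$ and $\mu_{m,d}\ast\C=\C$, the system $\tilde\C$ is isotopic to $\rho_m(\tilde\C)$; after a small isotopy I may assume $\tilde\C$ is genuinely $\rho_m$-invariant as a set. Since $\rho_m$ generates the deck transformation group of $\phi_{m,d}$, it follows that $\tilde\C=\phi_{m,d}^{-1}(\C')$ for a curve system $\C'$ in $D_{d+1}$, unique up to isotopy. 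The crucial verification is that $\C'$ is essential and unnested. Unnestedness is immediate: nested components of $\C'$ would lift to nested components of $\tilde\C=\phi_{m,d}^{-1}(\C')$, contradicting unnestedness of $\C$. For essentialness, suppose a component of $\C'$ were peripheral around some non-basepoint puncture; then some component $\tilde C$ of $\tilde\C$ would enclose two punctures $z_{i,j}, z_{i,j'}$ from a single $\rho_m$-orbit, and $\rho_m^{j-j'}(\tilde C)$ and $\tilde C$ would both enclose $z_{i,j}$, forcing them to coincide or to be nested; unnestedness of $\tilde\C$ then forces $\tilde C$ to be $\rho_m$-setwise-fixed, but a simply-connected region with nontrivial rotational symmetry must contain the rotation axis, so $\tilde C$ encloses $z_0$, and its projection in $D_{d+1}$ encloses the first puncture together with at least one more puncture, hence is essential.

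Once $\C'$ is an unnested essential curve system in $D_{d+1}$, the standard classification yields a composition $\d=(d_0+1,d_1,\ldots,d_r)$ of $d+1$ with $d_1\ge\cdots\ge d_r$ and a diffeomorphism $f\colon D_{d+1}\to D_{d+1}$ fixing the first puncture and $\partial D_{d+1}$ pointwise with $f(\C')=\C_\d$. I lift $f$ to the unique diffeomorphism $g\colon D_n^{(m)}\to D_n^{(m)}$ fixing $\partial D_n^{(m)}$ pointwise; by uniqueness of lifts $g$ commutes with $\rho_m$, so the braid $\zeta$ represented by $\eta_{m,d}\circ g\circ\eta_{m,d}^{-1}$ belongs to $Z(\mu_{m,d})$. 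A direct computation then gives
\[
\zeta\ast\C=\eta_{m,d}(g(\tilde\C))=\eta_{m,d}(\phi_{m,d}^{-1}(f(\C')))=\eta_{m,d}(\phi_{m,d}^{-1}(\C_\d))=\eta_{m,d}(\C_{m,\d}),
\]
and applying $\chi$, represented by $\eta_{m,\d}\circ\eta_{m,d}^{-1}$, yields $(\chi\zeta)\ast\C=\eta_{m,\d}(\C_{m,\d})=\C_{L_m(\d)}$.

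The main obstacle is the essentialness analysis of $\C'$, which relies on the rotational-symmetry argument above; once that is in place, the remainder is routine bookkeeping with the maps $\eta_{m,d}$, $\eta_{m,\d}$, and $\phi_{m,d}$ set up earlier in this section.
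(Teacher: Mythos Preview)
Your approach coincides with the paper's: pull $\C$ back to $D_n^{(m)}$, make it $\rho_m$-invariant, project to $D_{d+1}$, standardize there by a diffeomorphism fixing the first puncture, and lift back to obtain $\zeta$. The one place where you hand-wave and the paper supplies a real argument is the phrase ``after a small isotopy I may assume $\tilde\C$ is genuinely $\rho_m$-invariant as a set.'' That a curve system merely isotopic to its $\rho_m$-image can be isotoped to a setwise-fixed one is not automatic; the paper handles it by putting a complete hyperbolic metric on the interior of $D_{d+1}$, lifting it via $\phi_{m,d}$ so that $\rho_m$ becomes an isometry, and then replacing $\tilde\C$ by the unique geodesic curve system in its isotopy class, which is forced to satisfy $\rho_m(\tilde\C)=\tilde\C$.

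Your essentialness discussion, which you flag as the main obstacle, is in fact both easier than you suggest and not argued correctly. If a component of $\C'$ bounded a once-punctured disk around a non-basepoint puncture, its $\phi_{m,d}$-preimage would consist of $m$ disjoint curves each bounding a once-punctured disk (the monodromy of the cover around any non-basepoint puncture is trivial), immediately contradicting essentialness of $\C$; your claim that some lift would enclose two punctures $z_{i,j},z_{i,j'}$ from a single $\rho_m$-orbit is mistaken. The cases of a component peripheral to the basepoint or to $\partial D_{d+1}$ are handled just as directly. The paper does not spell any of this out, simply asserting that the projection is an unnested curve system in $D_{d+1}$.
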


\begin{proof}
The construction of $\d$ and $\zeta$ can be viewed in
Figure~\ref{fig:st}.
Let $\C'=\eta_{m,d}^{-1}(\C)$.
It is a curve system in $D_n^{(m)}$
such that $\rho_m (\C')$ is isotopic to $\C'$.

\begin{figure}
$
\xymatrix{
\hbox to 0pt{\hskip 1mm \fbox{$\C$}\hss}
\includegraphics[scale=.65]{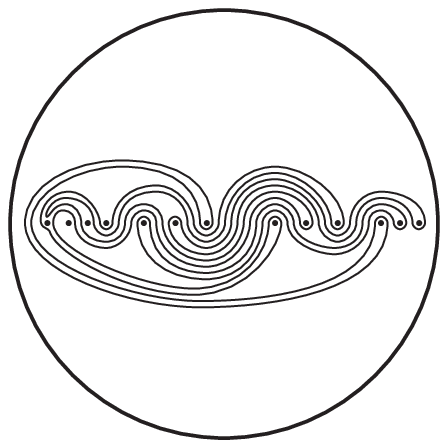} \ar@{.>}[r]^{h} &
\includegraphics[scale=.65]{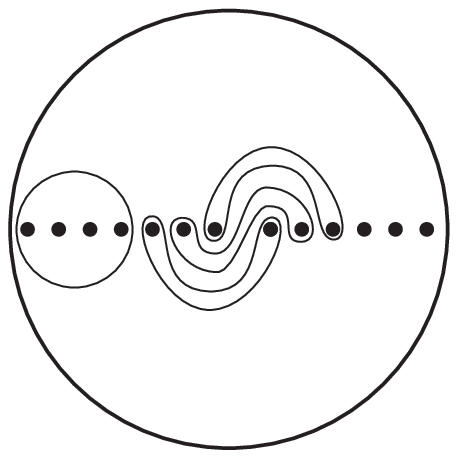} \ar@{.>}[r]^f &
\includegraphics[scale=.65]{sym-St13.eps}
\hbox to 0pt{\hskip -7mm\fbox{$\C_{L_m(\d)}$}\hss}\\
\hbox to 0pt{\hskip 1mm \fbox{$\C'$}\hss}
\includegraphics[scale=.65]{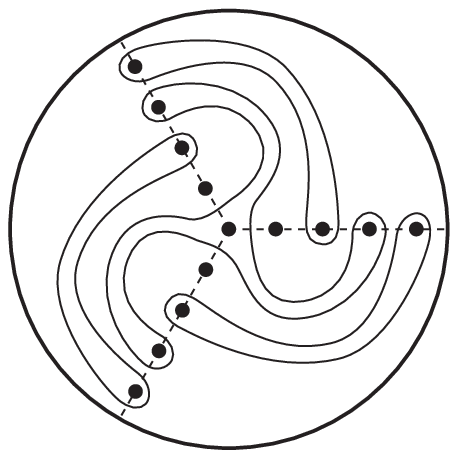} \ar@{.>}[r]^g
  \ar[d]_{\phi_{m,d}}
  \ar[u]_{\eta_{m,d}} &
\includegraphics[scale=.65]{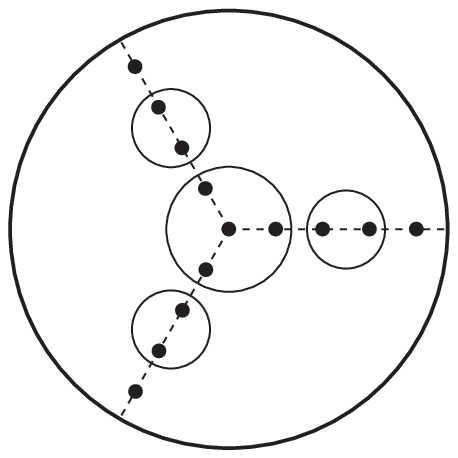}
\hbox to 0pt{\hskip -7mm\fbox{$\C_{m,\d}$}\hss}
  \ar[d]_{\phi_{m,d}}
  \ar[u]_{\eta_{m,d}}
  \ar[ur]_{\eta_{m,\d}} \\
\hbox to 0pt{\hskip 1mm \fbox{$\bar\C'$}\hss}
\includegraphics[scale=.65]{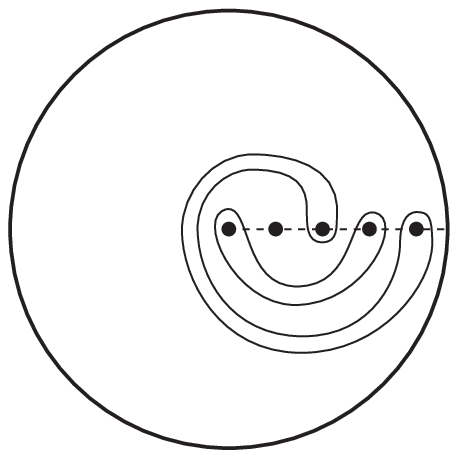} \ar@{.>}[r]^{\bar g}  &
\includegraphics[scale=.65]{sym-disc-b1.eps}
\hbox to 0pt{\hskip -7mm\fbox{$\C_\d$}\hss}\\
}
$
\caption{This figure shows the constructions
in the proof of Lemma~\ref{lem:sta}.}\label{fig:st}
\end{figure}

\begin{claim}{Claim}
\emph{Without loss of generality, we may assume that $\rho_m(\C')=\C'$.}
\end{claim}

\begin{proof}
Give a complete hyperbolic geometry on the interior of $D_{d+1}$,
and lift it to the interior of $D_n^{(m)}$ by using the covering
$\phi_{m,d}:D_n^{(m)}\to D_{d+1}$.
Then $\rho_m$ is an isometry.
Let $\C''$ be the unique geodesic curve system isotopic to $\C'$.
Because $\rho_m(\C'')$ is isotopic to $\C''$,
we have $\rho_m(\C'')=\C''$.
\end{proof}

Now we assume that $\rho_m (\C')=\C'$.
Let $\bar\C'=\phi_{m,d}(\C')$.
Because $\bar\C'$ is an unnested curve system in $D_{d+1}$,
there exists a diffeomorphism $\bar g:D_{d+1}\to D_{d+1}$
fixing the first puncture and the points at $\partial D_{d+1}$
such that $\bar g(\bar \C')$ is a standard unnested curve system,
hence $\bar g(\bar \C')=\C_\d$ for a composition
$\d=(d_0+1,d_1,\ldots,d_r)$ of $d+1$.
Note that $\bar g$ can be chosen so that $d_1\ge d_2\ge\cdots\ge d_r$.

Let $g:D_n^{(m)}\to D_n^{(m)}$ be the lift of $\bar{g}$
that fixes $\partial D_n^{(m)}$ pointwise.
Then $g$ is a $\rho_m$-equivariant diffeomorphism with
$g(\C')=\phi_{m,d}^{-1}(\C_\d)=\C_{m,\d}$.
Let $h=\eta_{m,d}\circ g\circ \eta_{m,d}^{-1}$
and $f=\eta_{m,\d}\circ \eta_{m,d}^{-1}$.
Let $\zeta$ and $\chi$ be $n$-braids represented by $h$ and $f$, respectively.
Because $g$ is $\rho_m$-equivariant, $\zeta\in Z(\mu_{m,d})$.
By the construction, $(f\circ h)(\C)=\C_{L_m(\d)}$,
hence $(\chi\zeta)*\C=\C_{L_m(\d)}$.
\end{proof}

\begin{corollary}\label{cor:sta}
Let $n=md+1$ with $m\ge 2$ and $d\ge 1$.
Let $\C_1$ and $\C_2$ be unnested curve systems in $D_n$
that are of the same type and invariant under the action of\/ $\mu_{m,d}$.
Then there exist $\chi\in B_n$, $\chi_1,\chi_2\in Z(\mu_{m,d})$ and
a composition $\d=(d_0+1,d_1,\ldots,d_r)$ of\/ $d+1$
such that
$$
\chi\mu_{m,d}\chi^{-1}=\mu_{m,\d}
\quad\mbox{and}\quad
(\chi\chi_1)*\C_1=(\chi\chi_2)*\C_2=\C_{L_m(\mathbf d)}.
$$
\end{corollary}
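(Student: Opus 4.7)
The plan is to apply Lemma~\ref{lem:sta} separately to $\C_1$ and $\C_2$, show that the two resulting compositions of $d+1$ must coincide because $\C_1$ and $\C_2$ are of the same type, and then verify the conjugation identity $\chi\mu_{m,d}\chi^{-1}=\mu_{m,\d}$ by comparing the underlying diffeomorphisms modulo $\Delta^2$.

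In detail, for each $i=1,2$ I apply Lemma~\ref{lem:sta} to $\C_i$ to obtain a composition $\d^{(i)}=(d_0^{(i)}+1,d_1^{(i)},\ldots,d_{r_i}^{(i)})$ of $d+1$ with $d_1^{(i)}\ge\cdots\ge d_{r_i}^{(i)}$, a braid $\zeta_i\in Z(\mu_{m,d})$, and the braid $\chi^{(i)}$ represented by $\eta_{m,\d^{(i)}}\circ\eta_{m,d}^{-1}$, such that $(\chi^{(i)}\zeta_i)*\C_i=\C_{L_m(\d^{(i)})}$. Since $\C_1$ and $\C_2$ have the same type, so do $\C_{L_m(\d^{(1)})}$ and $\C_{L_m(\d^{(2)})}$, i.e.\ $L_m(\d^{(1)})$ and $L_m(\d^{(2)})$ induce the same partition of $n$. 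I then argue $\d^{(1)}=\d^{(2)}$ by observing that in $L_m(\d)=(md_0+1,d_1,\ldots,d_1,\ldots,d_r,\ldots,d_r)$ every part other than $md_0+1$ appears with multiplicity divisible by $m$, while $md_0+1$ has multiplicity $\equiv 1\pmod m$; hence $d_0$ is recovered from the partition, the multiset $\{d_1,\ldots,d_r\}$ is recovered from the remaining multiplicities, and the decreasing ordering convention makes $\d$ unique. Setting $\d:=\d^{(1)}=\d^{(2)}$, the two representative diffeomorphisms $\eta_{m,\d^{(i)}}\circ\eta_{m,d}^{-1}$ coincide, so $\chi:=\chi^{(1)}=\chi^{(2)}$ is well defined, and $\chi_i:=\zeta_i$ immediately gives $(\chi\chi_i)*\C_i=\C_{L_m(\d)}$.

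For the conjugation identity, I will note that $\chi$ is represented by $f:=\eta_{m,\d}\circ\eta_{m,d}^{-1}$ while $\mu_{m,d}$ is represented modulo $\Delta^2$ by $\eta_{m,d}\circ\rho_m\circ\eta_{m,d}^{-1}$, so $\chi\mu_{m,d}\chi^{-1}$ is represented modulo $\Delta^2$ by $\eta_{m,\d}\circ\rho_m\circ\eta_{m,\d}^{-1}$. The same diffeomorphism represents $\mu_{m,\d}$ modulo $\Delta^2$ (by the analogue of the defining property of $\mu_{m,d}$ applied to $\eta_{m,\d}$, as in the discussion following Definition~\ref{def:composition}), so $\chi\mu_{m,d}\chi^{-1}=\mu_{m,\d}\cdot\Delta^{2k}$ for some $k\in\Z$. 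Taking $m$-th powers and using $(\mu_{m,d})^m=(\mu_{m,\d})^m=\Delta^2$ together with the centrality of $\Delta^2$ yields $\Delta^2=\Delta^{2(km+1)}$, forcing $km=0$ and hence $k=0$.

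The main obstacle I anticipate is verifying that $\eta_{m,\d}\circ\rho_m\circ\eta_{m,\d}^{-1}$ truly represents $\mu_{m,\d}$ modulo $\Delta^2$. This reduces to unpacking Definition~\ref{def:composition} and matching the explicit product formula for $\mu_{m,\d}$ against the $\rho_m$-rotation pushed through $\eta_{m,\d}$; essentially a bookkeeping exercise, but it is the one geometric input beyond Lemma~\ref{lem:sta}. Everything else in the plan—standardizing each curve system individually, recovering $\d$ from the partition, and the $m$-th power argument—is routine.
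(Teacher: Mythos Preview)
Your proposal is correct and follows essentially the same route as the paper's proof: apply Lemma~\ref{lem:sta} to each curve system, use the same-type hypothesis together with the decreasing ordering to force $\d^{(1)}=\d^{(2)}$, and then compare the diffeomorphisms $\eta_{m,\d}\circ\rho_m\circ\eta_{m,\d}^{-1}$ and $\eta_{m,d}\circ\rho_m\circ\eta_{m,d}^{-1}$ to get the conjugation identity modulo $\Delta^2$. The only cosmetic difference is in killing the central ambiguity: the paper observes that $\chi\mu_{m,d}\chi^{-1}$ and $\mu_{m,\d}$ are conjugate (both being conjugate to $\epsilon_{(n)}^d$), which together with equality mod $\Delta^2$ forces equality, whereas you take $m$-th powers and use $(\mu_{m,d})^m=(\mu_{m,\d})^m=\Delta^2$; both arguments are valid and equally short.
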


\begin{proof}
Applying Lemma~\ref{lem:sta} to $\C_1$ and $\C_2$,
we have $\chi_1,\chi_2\in Z(\mu_{m,d})$ and compositions
$\d=(d_0+1,d_1,\ldots,d_r)$ and $\d'=(d_0'+1,d_1',\ldots,d_{r'}')$ of $d+1$
with $d_1\ge\cdots\ge d_r$ and $d_1'\ge\cdots\ge d_{r'}'$
such that
$$
(\chi\chi_1)*\C_1=\C_{L_m(\d)}
\quad\mbox{and}\quad
(\chi'\chi_2)*\C_2=\C_{L_m(\d')},
$$
where $\chi$ and $\chi'$ are represented by
$\eta_{m,\d}\circ\eta_{m,d}^{-1}$ and
$\eta_{m,\d'}\circ\eta_{m,d}^{-1}$, respectively.
Because $\C_1$ and $\C_2$ are of the same type, so are
$\C_{L_m(\d)}$ and $\C_{L_m(\d')}$.
This implies $\d=\d'$ because
$d_1\ge\cdots\ge d_r$ and $d_1'\ge\cdots\ge d_{r'}'$.
Therefore $\chi=\chi'$.

Notice that the diffeomorphisms
$(\eta_{m,\d}\circ\eta_{m,d}^{-1})\circ (\eta_{m,d}\circ \rho_m \circ\eta_{m,d}^{-1})
\circ (\eta_{m,\d}\circ\eta_{m,d}^{-1})^{-1}$
and $\eta_{m,\d}\circ\rho_m\circ\eta_{m,\d}^{-1}$
represent $\chi\mu_{m,d}\chi^{-1}$ and $\mu_{m,\d}$, respectively, modulo $\Delta^2$.
Since the two diffeomorphisms are the same,
$\chi\mu_{m,d}\chi^{-1}=\mu_{m,\d} \bmod \Delta^2$.
Therefore
$$
\chi\mu_{m,d}\chi^{-1}=\mu_{m,\d}
$$
because $\chi\mu_{m,d}\chi^{-1}$ and $\mu_{m,\d}$ are conjugate.
\end{proof}

\begin{definition}
Let $\Lambda$ be a choice function for the set $\cup_{k\ge 1} B_k$,
hence $\Lambda(A)\in A$ for $A\subset\cup_{k\ge 1} B_k$.
For $\alpha\in B_k$, let $[\alpha]$ denote the conjugacy class of $\alpha$ in $B_k$.
Let $m\ge 2$ and $t \ge 0$.
We define
$$
\lambda=\lambda_{m,t}:\cup_{k\ge 1} B_k\to \cup_{k\ge 1} B_k
$$
as $\lambda(\alpha)=\Lambda(Z(\mu_{m,t})\cap [\alpha])$
if $\alpha\in B_{mt+1}$ and $[\alpha]\cap Z(\mu_{m,t})\ne\emptyset$, and
$\lambda(\alpha)=\Lambda([\alpha])$ otherwise.
We call $\lambda(\alpha)$ the \emph{conjugacy representative} of $\alpha$
with respect to $(m,t)$.
\end{definition}

Then $\lambda$ has the following properties.
\begin{itemize}
\item[(i)]
$\lambda(\alpha)$ is conjugate to $\alpha$.
Two braids $\alpha$ and $\beta$ are conjugate
if and only if $\lambda(\alpha)=\lambda(\beta)$.

\item[(ii)]
If $\alpha\in Z(\mu_{m,t})$, then $\lambda(\alpha)\in Z(\mu_{m,t})$.
\end{itemize}

\begin{lemma}\label{lem:choice}
Let\/ $n=md+1$ with $m\ge 2$ and $d\ge 1$.
Let\/ $\d=(d_0+1,d_1,\ldots,d_r)$ be a composition of\/ $d+1$,
and let $\n=L_m(\d)$.
Let
$\alpha = \myangle{\hat{\alpha}}_\n (\alpha_0\oplus
m\alpha_1\oplus\cdots\oplus m\alpha_r)_\n$
belong to $Z(\mu_{m,\d})$ such that $\hat\alpha$ is a pure braid.
Let $\lambda=\lambda_{m,d_0}$.
Suppose that $\alpha_0$ and $\lambda (\alpha_0)$ are conjugate in $Z(\mu_{m, d_0})$.
Then there exists an $\n$-split braid $\gamma$ in $Z(\mu_{m,\d})$
such that
$$
\gamma\alpha\gamma^{-1}
= \myangle{\hat{\alpha}}_\n (\lambda(\alpha_0)\oplus
m \lambda(\alpha_1)\oplus\cdots\oplus m\lambda(\alpha_r) )_\n.
$$
\end{lemma}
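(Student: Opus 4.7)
The plan is to build $\gamma$ as an $\n$-split braid whose interior blocks conjugate each $\alpha_i$ to $\lambda(\alpha_i)$. The hypothesis of the lemma provides $\eta_0\in Z(\mu_{m,d_0})$ with $\eta_0\alpha_0\eta_0^{-1}=\lambda(\alpha_0)$, while property~(i) of the conjugacy representative supplies, for each $i\ge 1$, some $\eta_i\in B_{d_i}$ with $\eta_i\alpha_i\eta_i^{-1}=\lambda(\alpha_i)$. I would then set
$$
\gamma=(\eta_0\oplus m\eta_1\oplus\cdots\oplus m\eta_r)_\n,
$$
an $\n$-split braid, which satisfies $\gamma*\C_\n=\C_\n$ by Lemma~\ref{lem:decom}(ii).

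Next, I would verify $\gamma\in Z(\mu_{m,\d})$ by invoking Corollary~\ref{cor:cent}: the $\n$-exterior braid of $\gamma$ is the identity, which is pure and trivially lies in $Z(\mu_{m,r})$, and the zeroth interior braid $\eta_0$ lies in $Z(\mu_{m,d_0})$ by construction.

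For the conjugation identity, the key observation is that $\hat\alpha$ being pure forces $\pi_{\hat\alpha}=\mathrm{id}$ and $\hat\alpha*\n=\n$, so Lemma~\ref{lem:decom}(iii) specializes to
$$
\myangle{\hat\alpha}_\n(\beta_0\oplus\cdots\oplus\beta_k)_\n
=(\beta_0\oplus\cdots\oplus\beta_k)_\n\myangle{\hat\alpha}_\n
$$
for every split factor. Applying this to $\gamma$ lets me pull $\myangle{\hat\alpha}_\n$ outside the conjugation, and then parts~(vi) and~(vii) of Lemma~\ref{lem:decom} multiply the remaining split factors block-wise to give
$$
\gamma\alpha\gamma^{-1}
=\myangle{\hat\alpha}_\n\bigl(\eta_0\alpha_0\eta_0^{-1}\oplus m(\eta_1\alpha_1\eta_1^{-1})\oplus\cdots\oplus m(\eta_r\alpha_r\eta_r^{-1})\bigr)_\n,
$$
which is the desired expression by the defining property of the $\eta_i$.

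The only truly essential input is that $\eta_0$ can be chosen inside $Z(\mu_{m,d_0})$; this is precisely what the hypothesis on $\alpha_0$ and $\lambda(\alpha_0)$ supplies, and without it $\gamma$ would fail the centralizer criterion of Corollary~\ref{cor:cent}. For $i\ge 1$ no such restriction is needed, since the block-diagonal structure of the $\oplus$-decomposition lets each $\eta_i$ act purely within its own $B_{d_i}$ factor.
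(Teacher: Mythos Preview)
Your proof is correct and follows essentially the same approach as the paper: construct $\gamma$ as the $\n$-split braid with interior blocks $\eta_0,\eta_1,\ldots,\eta_r$ conjugating each $\alpha_i$ to $\lambda(\alpha_i)$, verify $\gamma\in Z(\mu_{m,\d})$ via Corollary~\ref{cor:cent}, and use the purity of $\hat\alpha$ to compute the conjugation block-wise. The paper's argument is the same, only with the component-wise computation stated more tersely.
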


\begin{proof}
Since $\alpha\in Z(\mu_{m,\d})$, one has $\alpha_0\in Z(\mu_{m,d_0})$ by Lemma~\ref{lem:cent},
hence $\lambda(\alpha_0)\in Z(\mu_{m,d_0})$.
By the hypothesis,
there exists $\gamma_0\in Z(\mu_{m,d_0})$ such that
$\gamma_0\alpha_0\gamma_0^{-1}=\lambda(\alpha_0)$.
For $1\le i\le r$, choose $\gamma_i\in B_{d_i}$ such that
$\gamma_i\alpha_i\gamma_i^{-1}=\lambda(\alpha_i)$.
Using $\gamma_i$'s, define an $\n$-split braid $\gamma$ as
$$
\gamma = \myangle{1}_\n (\gamma_0\oplus
m\gamma_1\oplus\cdots\oplus m\gamma_r )_\n.
$$
By Corollary~\ref{cor:cent}, $\gamma\in Z(\mu_{m,\d})$.
Because the $\n$-exterior braids of $\alpha$ and $\gamma$ are pure braids,
we can compute the conjugation $\gamma\alpha\gamma^{-1}$ component-wise as follows.
\begin{eqnarray*}
\gamma\alpha\gamma^{-1}
&=& \myangle{1}_\n \myangle{\hat{\alpha}}_\n \myangle{1}_\n^{-1}
(\gamma_0 \alpha_0\gamma_0^{-1}\oplus
m(\gamma_1 \alpha_1\gamma_1^{-1})\oplus \cdots\oplus
m(\gamma_r \alpha_r\gamma_r^{-1}) )_\n\\
&=& \myangle{\hat{\alpha}}_\n (\lambda(\alpha_0)\oplus
m\lambda(\alpha_1)\oplus\cdots\oplus
m\lambda(\alpha_r) )_\n.
\end{eqnarray*}
This completes the proof.
\end{proof}

\section{Proof of Theorem~\ref{thm:conj}}
\label{sec:conj}

In this section we prove Theorem~\ref{thm:conj}.
Let us explain our strategy.

Let $\omega$ be a periodic $n$-braid, and let $\alpha,\beta\in Z(\omega)$
be conjugate in $B_n$.
If $\alpha$ and $\beta$ are periodic,
it is easy to prove the theorem because we may assume
that $\alpha$ and $\beta$ are central by~Corollary~\ref{cor:unique}.
If $\alpha$ and $\beta$ are pseudo-Anosov, then
we obtain a stronger result in Proposition~\ref{prop:pAconj} that
any conjugating element between $\alpha$ and $\beta$ must be
contained in $Z(\omega)$.
(This result will be crucial in dealing with
reducible braids.)
However a far more laborious task is required
when $\alpha$ and $\beta$ are reducible.

\medskip

Let us say that a periodic braid is \emph{$\delta$-type}
(resp.\ \emph{$\epsilon$-type}) if it is conjugate to
a power of $\delta$ (resp.\ a power of $\epsilon$).
The periodic braid $\omega$ is either $\delta$-type or $\epsilon$-type,
and Theorem~\ref{thm:conj} can be proved similarly for the two cases.
One possible strategy for escaping repetition of similar arguments
would be writing an explicit proof for one case
and then showing that the other case follows easily from that.
However this strategy hardly works.
It seems that (when $\alpha$ and $\beta$ are reducible)
the proof for $\delta$-type $\omega$
must contain most of the whole arguments needed in the proof
for $\epsilon$-type $\omega$,
and that the theorem for $\delta$-type $\omega$
is not a consequence of that for $\epsilon$-type $\omega$.

Proposition~\ref{prop:key} is a key step in the proof of the theorem
when $\alpha$ and $\beta$ are reducible.
The proposition may look rather unnatural at a glance,
but it is written in that form
in order to minimize repetition of similar arguments.

\begin{proposition}\label{prop:pAconj}
Let $\omega$ be a periodic $n$-braid,
and let $\alpha$ and $\beta$ be pseudo-Anosov braids in $Z(\omega)$.
If $\gamma$ is an $n$-braid with $\beta=\gamma\alpha\gamma^{-1}$,
then $\gamma\in Z(\omega)$.
\end{proposition}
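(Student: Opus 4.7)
The strategy is to show that the existence of a pseudo-Anosov braid in $Z(\omega)$ forces $\omega$ to be central in $B_n$; then $Z(\omega)=B_n$ and the conclusion $\gamma\in Z(\omega)$ is automatic.

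The first step is easy manipulation. Since $\omega$ commutes with $\beta=\gamma\alpha\gamma^{-1}$, one has $\omega\gamma\alpha\gamma^{-1}=\gamma\alpha\gamma^{-1}\omega$, i.e.\
$(\gamma^{-1}\omega\gamma)\alpha=\alpha(\gamma^{-1}\omega\gamma)$.
Thus both $\omega$ and $\omega':=\gamma^{-1}\omega\gamma$ lie in $Z_{B_n}(\alpha)$. Moreover, $\omega'$ is conjugate to $\omega$, hence is also periodic.

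The second step, which carries the real content, is to invoke the classical structure of the centralizer of a pseudo-Anosov braid. Namely, for pseudo-Anosov $\alpha\in B_n$, the group $Z_{B_n}(\alpha)$ is free abelian of rank two, generated by $\Delta^2$ together with a primitive pseudo-Anosov braid $\alpha_0$ of which $\alpha$ is a nontrivial power modulo the center. This follows from Thurston's theory (the centralizer of a pseudo-Anosov mapping class in the mapping class group modulo its center is infinite cyclic) together with the fact that $B_n$ is torsion-free; in the braid-group setting it is worked out explicitly by J.~Gonz\'alez-Meneses and B.~Wiest~\cite{GW04}. In particular, the quotient $Z_{B_n}(\alpha)/\langle\Delta^2\rangle$ is infinite cyclic, hence torsion-free.

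The third step concludes. Since $\omega$ is periodic, $\omega^k=\Delta^{2\ell}$ for some $k\ne 0$, so the image of $\omega$ in the torsion-free quotient $Z_{B_n}(\alpha)/\langle\Delta^2\rangle$ is trivial; thus $\omega\in\langle\Delta^2\rangle$ is central in $B_n$. Therefore $Z(\omega)=B_n$ and $\gamma\in Z(\omega)$.

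The only nontrivial ingredient is the structure theorem for pseudo-Anosov centralizers cited in the second step; everything else is formal. Note also that the argument proves the stronger statement \emph{there exists a pseudo-Anosov braid in $Z(\omega)$ only if $\omega$ is central}, which explains why the proposition is vacuous (equivalently, trivially true) whenever $\omega$ is noncentral periodic, and this is precisely the point that makes it useful for handling interior pseudo-Anosov components of reducible braids in the subsequent arguments.
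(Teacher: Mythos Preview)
Your argument contains a genuine error in the second step. You assert that $Z_{B_n}(\alpha)$ is generated by $\Delta^2$ together with a primitive pseudo-Anosov root $\alpha_0$, and hence that $Z_{B_n}(\alpha)/\langle\Delta^2\rangle$ is infinite cyclic and torsion-free. The result of Gonz\'alez-Meneses and Wiest~\cite{GW04} says only that $Z_{B_n}(\alpha)$ is free abelian of rank two; it does \emph{not} say that $\Delta^2$ is primitive in this lattice. In fact your own setup furnishes the counterexample: by hypothesis $\omega\in Z_{B_n}(\alpha)$, and if $\omega$ is noncentral periodic then $\omega^m=\Delta^{2\ell}$ with $\omega\notin\langle\Delta^2\rangle$, so the image of $\omega$ in $Z_{B_n}(\alpha)/\langle\Delta^2\rangle$ is a nontrivial torsion element. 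Thus the quotient is \emph{not} torsion-free and your step~3 fails.

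Consequently the ``stronger statement'' in your final paragraph---that a pseudo-Anosov braid in $Z(\omega)$ forces $\omega$ to be central---is false. Indeed Lemma~\ref{lem:dyn} shows that the isomorphism $\varphi:B_{d+1,1}\to Z(\mu_{m,d})$ preserves Nielsen-Thurston type, so any pseudo-Anosov $1$-pure $(d{+}1)$-braid (and these exist for $d\ge 2$) maps to a pseudo-Anosov element of $Z(\mu_{m,d})$ with $\mu_{m,d}$ noncentral. The proposition is therefore not vacuous for noncentral $\omega$, and the paper uses it substantively in exactly that situation.

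The paper's proof avoids this trap by using only the weaker (and correct) consequence of~\cite{GW04} that $Z_{B_n}(\alpha)$ is \emph{abelian}. From $\beta=\gamma\alpha\gamma^{-1}$ one gets that $\gamma_0:=\gamma^{-1}\omega^{-1}\gamma\omega$ lies in $Z(\alpha)$; since $\omega$ also lies in $Z(\alpha)$ and $Z(\alpha)$ is abelian, $\gamma_0$ commutes with $\omega$, and a telescoping computation gives $\gamma_0^m=1$ where $m$ is chosen so that $\omega^m$ is central. Torsion-freeness of $B_n$ then forces $\gamma_0=1$, i.e.\ $\gamma\in Z(\omega)$.
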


\begin{proof}
Let $I_\omega:B_n\to B_n$ be the inner automorphism such that
$I_\omega(\chi)=\omega^{-1}\chi\omega$ for $\chi\in B_n$.
Since $\alpha,\beta\in Z(\omega)$,
we have $I_\omega(\alpha)=\alpha$ and $I_\omega(\beta)=\beta$.
Therefore
$$
\gamma\alpha\gamma^{-1}
=\beta=I_\omega(\beta)=I_\omega(\gamma\alpha\gamma^{-1})
=I_\omega(\gamma) I_\omega(\alpha)I_\omega(\gamma^{-1})
=I_\omega(\gamma)\alpha I_\omega(\gamma)^{-1}.
$$
Let $\gamma_0=\gamma^{-1}I_\omega(\gamma)$.
Then $\gamma_0\in Z(\alpha)$.
It is known by J.~Gonz\'alez-Meneses and B.~Wiest~\cite{GW04}
that the centralizer of a pseudo-Anosov braid is a free abelian group of rank two.
Since $\alpha$ is pseudo-Anosov and $\gamma_0,\omega\in Z(\alpha)$,
$\gamma_0$ commutes with $\omega$,
hence $I_\omega^k(\gamma_0)=\gamma_0$ for all $k$.
Take a positive integer $m$ such that $\omega^m$ is central.
Since $I_\omega^m(\gamma)=\gamma$,
\begin{eqnarray*}
\gamma_0^m
&=&\gamma_0\gamma_0\cdots\gamma_0
=\gamma_0\cdot I_\omega(\gamma_0)\cdots I_\omega^{m-1}(\gamma_0)\\
&=&\gamma^{-1}I_\omega(\gamma)\cdot I_\omega(\gamma^{-1})I_\omega^2(\gamma)
\cdots I_\omega^{m-1}(\gamma^{-1})I_\omega^m(\gamma)\\
&=&\gamma^{-1}I_\omega^m(\gamma)=\gamma^{-1}\gamma=1.
\end{eqnarray*}
Since braid groups are torsion free
(for example see~\cite{Deh98}),
we have $\gamma_0=1$, hence $\gamma=I_\omega(\gamma)$.
This means that $\gamma\in Z(\omega)$.
\end{proof}

We remark that, in the above proposition,
the pseudo-Anosov condition on $\alpha$ and $\beta$ is necessary.
Let $\omega$ be a non-central periodic braid,
and let $\gamma$ be a braid which does not belong to $Z(\omega)$.
Let $\alpha=\beta=\Delta^2$.
Because $\Delta^2$ is central,
we have $\alpha,\beta\in Z(\omega)$ and
$\gamma\alpha\gamma^{-1}=\beta$.
However, $\gamma\not\in Z(\omega)$.

\medskip

The following proposition is a key step in the proof of
Theorem~\ref{thm:conj}.
Recall that $\nu:B_{n,1}\to B_{n-1}$ is the homomorphism
deleting the first strand.

\begin{proposition}\label{prop:key}
Let $m\ge 2$ and $d\ge 1$.
Let $\omega$ be a 1-pure braid conjugate to $\epsilon_{(md+1)}^d$.
Let $\alpha$ and $\beta$ be pure braids in $Z(\omega)$ such that either
$\alpha$ and $\beta$ are conjugate in $B_{md+1}$ or
$\nu(\alpha)$ and $\nu(\beta)$ are conjugate in $B_{md}$.
Then $\alpha$ and $\beta$ are conjugate in $Z(\omega)$.
\end{proposition}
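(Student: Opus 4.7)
My plan is to induct on $d$, splitting the inductive step by the Nielsen--Thurston type of $\alpha$, which equals that of $\beta$ under either hypothesis by Corollary~\ref{cor:dyn}. By Lemma~\ref{lem:easy}(i), I may assume $\omega = \mu_{m,d}$.

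For the periodic case, I choose $k\neq 0$ so that $\alpha^k$ and $\beta^k$ are central in $B_{md+1}$. Central elements are fixed by any conjugation, so the $B_{md+1}$-conjugacy hypothesis gives $\alpha^k=\beta^k$ directly; the $\nu$-hypothesis gives $\nu(\alpha^k)=\nu(\beta^k)$, and writing $\alpha^k=\omega^a$ and $\beta^k=\omega^b$ (both central periodic elements of $Z(\omega)$), the fact that $\nu(\omega)$ has infinite order in $B_{md}$ (braid groups are torsion-free) forces $a=b$, hence again $\alpha^k=\beta^k$. Corollary~\ref{cor:unique} applied to $Z(\omega)\cong A(\mathbf B_d)$ then concludes. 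For the pseudo-Anosov case, the first hypothesis is handled directly by Proposition~\ref{prop:pAconj}. For the second, I would first show that the restriction $\nu|_{Z(\omega)}\colon Z(\omega)\to Z(\nu(\omega))$ is a bijection (both centralizers are $\cong A(\mathbf B_d)$, and the restriction corresponds to this identification under the geometric construction of Section~\ref{sec:def-mu}), apply Proposition~\ref{prop:pAconj} in $B_{md}$ to get a conjugator in $Z(\nu(\omega))$, and lift it via this bijection; injectivity of $\nu|_{Z(\omega)}$ then verifies that the lifted element conjugates $\alpha$ to $\beta$.

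The reducible non-periodic case is the heart of the argument. The reduction systems $\Rext(\alpha)$ and $\Rext(\beta)$ are nonempty, unnested, $\omega$-invariant, and of the same type (directly under the first hypothesis; under the second, using the $\nu$-analogue of Corollary~\ref{cor:crs} which relates $\Rext$ of a braid in $Z(\omega)$ to that of its image under $\nu$). Using Corollary~\ref{cor:sta} together with Lemma~\ref{lem:easy}, I replace $\omega$ by $\mu_{m,\mathbf d}$ for a composition $\mathbf d=(d_0+1,d_1,\ldots,d_r)$ of $d+1$, with $r\ge 1$ and $d_0<d$ (forced by non-periodicity), and arrange $\Rext(\alpha)=\Rext(\beta)=\C_{\n}$ where $\n=L_m(\mathbf d)$; the new $\omega$ is still conjugate to $\epsilon^d_{(md+1)}$, and $\alpha,\beta$ remain pure. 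Corollary~\ref{cor:cent} then decomposes
\[
\alpha = \langle\hat\alpha\rangle_{\n}(\alpha_0\oplus m\alpha_1\oplus\cdots\oplus m\alpha_r)_{\n},\qquad
\beta  = \langle\hat\beta\rangle_{\n}(\beta_0\oplus m\beta_1\oplus\cdots\oplus m\beta_r)_{\n},
\]
with $\hat\alpha,\hat\beta\in Z(\mu_{m,r})$, $\alpha_0,\beta_0\in Z(\mu_{m,d_0})$, and $\alpha_i,\beta_i\in B_{d_i}$, all of them pure.

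Any conjugator $\gamma$ (for the $\nu$-hypothesis, I first lift the $B_{md}$-conjugator to $B_{md+1,1}$ via a section of $\nu$) must preserve $\C_{\n}$, and Lemma~\ref{lem:Rext}(ii) yields $\hat\gamma\hat\alpha\hat\gamma^{-1}=\hat\beta$ in $B_{mr+1}$. Expanding $\gamma\alpha\gamma^{-1}=\beta$ via Lemma~\ref{lem:decom} and using purity of $\hat\alpha$ (so that $\langle\hat\alpha\rangle_{\n}$ commutes with the interior factors), I compare sides via the uniqueness in Lemma~\ref{lem:decom}(i) and read off $\alpha_0\sim\beta_0$ in $B_{md_0+1}$ (the case $d_0=0$ being trivial since $\alpha_0=\beta_0=1$) and $\alpha_i\sim\beta_{\sigma(i)}$ in $B_{d_i}$ for some permutation $\sigma$ of $\{1,\ldots,r\}$ with $d_{\sigma(i)}=d_i$. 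Since $r<d$ and $d_0<d$, the inductive hypothesis applies to $(\hat\alpha,\hat\beta)$ in $Z(\mu_{m,r})$ and to $(\alpha_0,\beta_0)$ in $Z(\mu_{m,d_0})$; the ordinary braid pieces $(\alpha_i,\beta_{\sigma(i)})$ need no further argument. Using Lemma~\ref{lem:choice} to normalize all interior braids to common conjugacy representatives, I assemble the resulting conjugators---following the recipe of Corollary~\ref{cor:cent}---into an element $\gamma'\in Z(\mu_{m,\mathbf d})$ satisfying $\gamma'\alpha\gamma'^{-1}=\beta$. The main obstacle will be the combinatorial bookkeeping in this last step: tracking how $\pi_{\hat\gamma}$ permutes the blocks of $\n$, handling coincidences among the $d_i$'s (which allow nontrivial block permutations), and verifying that the assembled $\gamma'$ genuinely commutes with $\mu_{m,\mathbf d}$ rather than merely conjugating $\alpha$ to $\beta$.
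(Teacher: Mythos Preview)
Your overall architecture---induction on $d$, reduction to $\omega=\mu_{m,\mathbf d}$ with $\Rext(\alpha)=\Rext(\beta)=\C_{\n}$, and the decomposition of $\alpha,\beta$ via Corollary~\ref{cor:cent}---matches the paper's. The periodic and pseudo-Anosov base cases are essentially as in the paper (Claim~1). But there are two genuine gaps in your treatment of the reducible case.

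\textbf{The $\nu$-hypothesis cannot be handled by lifting the conjugator.} If $\gamma\in B_{md}$ satisfies $\gamma\,\nu(\alpha)\,\gamma^{-1}=\nu(\beta)$ and $\tilde\gamma\in B_{md+1,1}$ is a section-lift, then one only has $\nu(\tilde\gamma\alpha\tilde\gamma^{-1})=\nu(\beta)$; since $\ker\nu$ is nontrivial, this does \emph{not} give $\tilde\gamma\alpha\tilde\gamma^{-1}=\beta$, so neither Lemma~\ref{lem:Rext}(ii) nor your expansion ``$\gamma\alpha\gamma^{-1}=\beta$'' applies. The paper never lifts the conjugator wholesale. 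Instead it works in $B_{n-1}$ with $\C_{\n'}$ to get the exterior conjugacy (Claim~3), uses the multiset-of-conjugacy-classes argument of~\cite[Prop.~3.2]{Gon03} on $\nu(\alpha),\nu(\beta)$ for the interior pieces (Claim~4), and---only when $\hat\alpha$ is pseudo-Anosov---explicitly manufactures an honest $B_n$-conjugator $\gamma'$ from the $B_{n-1}$-conjugator by repairing the first-strand data block by block (Claim~5), invoking the induction hypothesis on $(\alpha_0,\beta_0)$ or on $(\hat\alpha,\hat\beta)$ to do so.

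\textbf{The assembly step is not ``bookkeeping''; it requires splitting on the Nielsen--Thurston type of $\hat\alpha$.} From a $B_n$-conjugator $\gamma$ you extract a permutation $\sigma$ on the interior blocks and, by induction, \emph{some} $\hat\zeta\in Z(\mu_{m,r})$ with $\hat\zeta\hat\alpha\hat\zeta^{-1}=\hat\beta$. But to build $\gamma'\in Z(\mu_{m,\mathbf d})$ via Corollary~\ref{cor:cent} you need an exterior in $Z(\mu_{m,r})$ whose induced block permutation \emph{equals} $\sigma$; there is no reason your $\hat\zeta$ does this. The paper resolves this by a case split. If $\hat\alpha$ is pseudo-Anosov, Proposition~\ref{prop:pAconj} forces the \emph{actual} exterior $\hat\gamma=\Ext_\n(\gamma)$ to lie in $Z(\mu_{m,r})$, so $\sigma$ is already of the form in Lemma~\ref{lem:per} and one builds $\chi\in Z(\mu_{m,\mathbf d})$ with $\Ext_\n(\chi)=\hat\gamma$ directly (Case~2). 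If $\hat\alpha$ is periodic, then $\hat\alpha=\hat\beta$ is central, so one is free to \emph{choose} any $\hat\zeta\in Z(\mu_{m,r})$ realizing the desired permutation $\theta$ on the blocks (Case~1); here the interior correspondence comes not from expanding $\gamma\alpha\gamma^{-1}$ but from the multiset identity, which also handles the possibility $md_0+1=d_i$ that would otherwise prevent you from ``reading off $\alpha_0\sim\beta_0$'' directly. Without this split, the compatibility between exterior conjugator and interior permutation is simply not established.
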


Before proving the above proposition, we prove
Theorem~\ref{thm:conj} assuming the proposition.

\begin{proof}[Proof of Theorem~\ref{thm:conj}]
Let $\omega$ be a periodic $n$-braid,
and let $\alpha, \beta\in Z(\omega)$ be conjugate in $B_{n}$.
We will show that $\alpha$ and $\beta$ are conjugate in $Z(\omega)$.

\begin{claim}{Claim}
Without loss of generality,
we may assume that $\omega$ is non-central, and both $\alpha$ and $\beta$ are pure braids.
\end{claim}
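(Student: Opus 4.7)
The plan is to prove the claim by two independent reductions, each justified by tools already available in the paper.

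First I would dispose of the case of central $\omega$. If $\omega$ lies in the center of $B_n$, then $Z(\omega) = B_n$, so any conjugating element $\gamma \in B_n$ witnessing $\gamma\alpha\gamma^{-1} = \beta$ automatically belongs to $Z(\omega)$, and the theorem is trivial in this case. Hence we may assume throughout that $\omega$ is non-central.

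Next I would reduce to the case where $\alpha$ and $\beta$ are both pure. Choose any positive integer $N$ with $\pi_\alpha^N = \mathrm{id}$ (for example $N = n!$). Since $\alpha$ and $\beta$ are conjugate in $B_n$, their induced permutations are conjugate in the symmetric group $S_n$ and hence have the same order, so $\pi_\beta^N = \mathrm{id}$ as well; consequently $\alpha^N$ and $\beta^N$ are pure braids. Clearly $\alpha^N, \beta^N \in Z(\omega)$, and conjugating $\alpha^N$ by the same element $\gamma$ that conjugates $\alpha$ to $\beta$ yields $\beta^N$, so $\alpha^N$ and $\beta^N$ remain conjugate in $B_n$. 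Granting the theorem in the pure, non-central case, we would obtain that $\alpha^N$ and $\beta^N$ are conjugate in $Z(\omega)$. Because $\omega$ is non-central, Theorem~\ref{thm:cent} identifies $Z(\omega)$ with an Artin group of type $\arB_d$, and Corollary~\ref{cor:unique} applied inside $A(\arB_d)$ then lifts the conjugacy of the $N$-th powers to conjugacy of $\alpha$ and $\beta$ themselves in $Z(\omega)$, completing the reduction.

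No genuine obstacle arises in this step: it is a short bookkeeping argument whose only substantive ingredient is the root-uniqueness result (Theorem~\ref{thm:artin}, via its consequence Corollary~\ref{cor:unique}) for Artin groups of type $\arB$. The real weight of Theorem~\ref{thm:conj} lies in handling the pure, non-central case, which is exactly the content of Proposition~\ref{prop:key}; that proposition in turn will be attacked with the machinery of $\mu_{m,\d}$, the standard curve system $\C_{L_m(\d)}$, Corollary~\ref{cor:sta}, and the conjugacy representative $\lambda_{m,t}$ set up in the previous section.
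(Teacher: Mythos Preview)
Your proposal is correct and follows essentially the same approach as the paper: first dispose of central $\omega$ trivially, then pass to powers $\alpha^k,\beta^k$ that are pure and invoke Corollary~\ref{cor:unique} for $Z(\omega)\simeq A(\arB_\ell)$ to lift conjugacy of the powers back to conjugacy of $\alpha$ and $\beta$. The only difference is cosmetic: you spell out why a single exponent $N$ works for both $\alpha$ and $\beta$, whereas the paper simply asserts the existence of such $k$.
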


\begin{proof}[Proof of Claim]
If $\omega$ is central,
then $Z(\omega)=B_n$, hence there is nothing to prove.
Hence we may assume that $\omega$ is non-central.

Assume that Theorem~\ref{thm:conj} holds for pure braids.
Choose a positive integer $k$ such that $\alpha^k$ and $\beta^k$ are pure.
By assumption, $\alpha^k$ and $\beta^k$ are conjugate in $Z(\omega)$.
Because $Z(\omega)$ is isomorphic to $A(\arB_\ell)$ for some $\ell\ge 1$,
$\alpha$ and $\beta$ are conjugate in $Z(\omega)$
by Corollary~\ref{cor:unique}.
This shows that it suffices to prove Theorem~\ref{thm:conj}
for the case where $\alpha$ and $\beta$ are pure braids.
\end{proof}

Now we assume that $\omega$ is noncentral,
and both $\alpha$ and $\beta$ are pure braids.

By Lemma~\ref{lem:easy}~(i), we may assume
that $\omega=\delta^d$ or $\omega=\epsilon^d$ for some $d$.
By Lemma~\ref{lem:order}, we may further assume that $d$
is a positive integer which is a divisor
of $n$ if $\omega=\delta^d$ and a divisor of $n-1$ if $\omega=\epsilon^d$.
Therefore there is a positive integer $m$ such that
$n=md$ if $\omega=\delta^d$ and $n=md+1$ if $\omega=\epsilon^d$.
Note that $m\ge 2$ because $\omega$ is central if $m=1$.

If $\omega=\epsilon^d$,
then $\alpha$ and $\beta$ are conjugate in $Z(\omega)$ by Proposition~\ref{prop:key}.

Now, suppose that $\omega =\delta^d$.
Let $\bar\alpha$ and $\bar\beta$ be the elements of $B_{d+1,1}$
such that $\psi_5(\bar\alpha)=\alpha$ and $\psi_5(\bar\beta)=\beta$.
Let $\alpha'=\psi_4(\bar\alpha)$ and $\beta'=\psi_4(\bar\beta)$.
Then $\alpha'$ and $\beta'$ are $(n+1)$-braids in $Z(\epsilon_{(n+1)}^d)$
such that $\nu(\alpha')=\alpha$ and $\nu(\beta')=\beta$ are conjugate in $B_{n}$.
$$
\xymatrix{
B_n \supset Z(\delta_{(n)}^d)
&&
Z(\epsilon_{(n+1)}^d) \subset B_{n+1,1} \ar[ll]_\nu \\
&
B_{d+1,1} \ar[ul]^{\psi_5} \ar[ur]_{\psi_4}
}\qquad
\xymatrix{
\alpha,~\beta && \alpha',~\beta' \ar@{|->}[ll]_\nu \\
&
\bar\alpha,~\bar\beta \ar@{|->}[ul]^{\psi_5} \ar@{|->}[ur]_{\psi_4}
}
$$

Since $\alpha$ and $\beta$ are pure, so are $\alpha'$ and $\beta'$.
Applying Proposition~\ref{prop:key} to $(\alpha',\beta',\epsilon_{(n+1)}^d)$,
we conclude that $\alpha'$ and $\beta'$ are conjugate in $Z(\epsilon_{(n+1)}^d)$.
This implies that $\bar\alpha$ and $\bar\beta$ are conjugate in $B_{d+1,1}$,
hence $\alpha$ and $\beta$ are conjugate in $Z(\delta^d)=Z(\omega)$.
\end{proof}

\begin{proof}[Proof of Proposition~\ref{prop:key}]
Let $n=md+1$.
Recall that $m\ge 2$ and $d\ge 1$;
$\omega$ is a 1-pure braid conjugate to $\epsilon_{(n)}^d$;
$\alpha$ and $\beta$ are pure braids in $Z(\omega)$ such that either
$\alpha$ and $\beta$ are conjugate in $B_{n}$ or
$\nu(\alpha)$ and $\nu(\beta)$ are conjugate in $B_{n-1}$.
We will show that $\alpha$ and $\beta$ are conjugate in $Z(\omega)$
by using induction on $d\ge 1$.
By Lemma~\ref{lem:easy}, we may assume that $\omega=\mu_{m,d}$.

\begin{claim}{Claim 1}
\begin{itemize}
\item[(i)]
$\alpha$ and $\beta$ are conjugate in $Z(\omega)$
if and only if $\nu(\alpha)$ and $\nu(\beta)$ are conjugate in $Z(\nu(\omega))$.
\item[(ii)]
The braids $\alpha$, $\beta$, $\nu(\alpha)$ and $\nu(\beta)$
have the same Nielsen-Thurston type.
\item[(iii)]
The proposition holds if\/
$\alpha$ is periodic or pseudo-Anosov, or if\/ $d=1$.
\item[(iv)]
If\/ $\alpha$ is non-periodic and reducible,
then $\Rext(\alpha)$ and $\Rext(\beta)$ are of the same type.
\end{itemize}
\end{claim}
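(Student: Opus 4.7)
The plan is to establish the four parts of Claim~1 in order, drawing on the structural results of Sections 2 and 4.

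For (i), I would argue that $\nu$ restricts to a group isomorphism from $Z(\mu_{m,d})$ onto $Z(\nu(\mu_{m,d}))$. Indeed, both centralizers are canonically identified with $A(\arB_d) \simeq B_{d+1,1}$ by Theorem~\ref{thm:cent}, and the relation $\psi_5' = \nu \circ \psi_4'$, together with the fact that $\psi_4'$ realizes the first identification, forces $\nu|_{Z(\mu_{m,d})}$ to coincide with the induced isomorphism between the two centralizers. Conjugacy is preserved under group isomorphisms, hence (i) follows.

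For (ii), I would invoke Corollary~\ref{cor:dyn}: since $\omega$ is conjugate to $\epsilon_{(md+1)}^d$ and $\alpha, \beta \in Z(\omega)$, the braid $\alpha$ shares its Nielsen--Thurston type with $\nu(\alpha)$, and likewise $\beta$ with $\nu(\beta)$. Since Nielsen--Thurston type is a conjugacy invariant in any braid group, the hypothesis (either $\alpha \sim \beta$ in $B_n$ or $\nu(\alpha) \sim \nu(\beta)$ in $B_{n-1}$) makes all four braids share the same type.

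For (iii), I would treat the three subcases in turn. If $\alpha$ is periodic, I choose $k \neq 0$ with $\alpha^k$ central in $B_n$; in either conjugacy case $\beta^k$ must equal $\alpha^k$ (conjugate central elements being equal, possibly after applying $\nu$ and recalling $\nu(\Delta_{(n)}^2) = \Delta_{(n-1)}^2$), and Corollary~\ref{cor:unique} applied to $Z(\omega) \simeq A(\arB_d)$ delivers $\alpha \sim \beta$ in $Z(\omega)$. If $\alpha$ is pseudo-Anosov, Proposition~\ref{prop:pAconj} supplies a conjugating element in $Z(\omega)$ directly when $\alpha \sim \beta$ in $B_n$, and via (i) after applying the proposition to $\nu(\alpha), \nu(\beta) \in Z(\nu(\omega))$ in the other case. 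If $d = 1$, then $Z(\omega) \simeq \mathbb{Z}$ is abelian, so ``conjugate in $Z(\omega)$'' means equal; this follows from the exponent-sum conjugacy invariant in $B_n$ (or $B_{n-1}$), since the generator of the respective centralizer has nonzero exponent sum.

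For (iv), the case in which $\alpha$ and $\beta$ are conjugate in $B_n$ via some $\gamma$ is immediate, as $\Rext(\beta) = \gamma \ast \Rext(\alpha)$ yields a diffeomorphism of $D_n$ realizing same type. The harder case is when only $\nu(\alpha) \sim \nu(\beta)$ in $B_{n-1}$ is available. By Corollary~\ref{cor:crs}, $\Rext(\nu(\alpha))$ and $\Rext(\nu(\beta))$ are the images of $\Rext(\alpha)$ and $\Rext(\beta)$ under forgetting puncture $1$, and the $\nu$-conjugacy furnishes a diffeomorphism of $D_{n-1}$ matching them. My plan is to modify this diffeomorphism by an ambient isotopy fixing the location of puncture $1$ and then restrict it to a diffeomorphism of $D_n$. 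The main obstacle will be upgrading the matching from isotopy in $D_{n-1}$ to isotopy in $D_n$; here the hypothesis $m \geq 2$ ensures that any outermost component of $\Rext(\alpha)$ enclosing puncture $1$ must enclose at least $1 + m$ punctures of $D_n$, so no such component disappears under forgetting puncture $1$, and the $\mu_{m,d}$-invariance of both reduction systems supplies the structural control needed to obtain matching in $D_n$.
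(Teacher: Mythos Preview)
Your arguments for (i) and (ii) match the paper's.  For (iii), your treatment is correct but more elaborate than needed: Proposition~\ref{prop:key} already assumes $\alpha$ and $\beta$ are \emph{pure}, so in the periodic case $\alpha$ and $\beta$ (and $\nu(\alpha),\nu(\beta)$) are central, and equality follows at once without passing to powers and invoking Corollary~\ref{cor:unique}.  For $d=1$ the paper simply notes $B_{2,1}=\{\Delta_{(2)}^{2k}:k\in\Z\}$, so $\bar\alpha$ is periodic and the periodic case applies; your exponent-sum argument also works.

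The real divergence is in (iv).  The paper does not try to lift a diffeomorphism from $D_{n-1}$ to $D_n$.  Instead it works through $B_{d+1,1}$: after conjugating $\bar\alpha,\bar\beta$ in $B_{d+1,1}$ so that $\Rext(\bar\alpha)=\C_{\d_1}$ and $\Rext(\bar\beta)=\C_{\d_2}$ are standard, Corollary~\ref{cor:crs} gives explicit compositions $\n_1,\n_1',\n_2,\n_2'$ with $\Rext(\alpha)\approx\C_{\n_1}$, $\Rext(\nu(\alpha))\approx\C_{\n_1'}$, and similarly for $\beta$.  The hypothesis $\C_{\n_1'}\approx\C_{\n_2'}$ becomes an equality of integer multisets
\[
\{md_0,\ \underbrace{d_1,\dots,d_1}_{m},\dots,\underbrace{d_r,\dots,d_r}_{m}\}
=\{me_0,\ \underbrace{e_1,\dots,e_1}_{m},\dots,\underbrace{e_s,\dots,e_s}_{m}\},
\]
and a multiplicity count modulo $m$ (using $m\ge2$) forces $d_0=e_0$ and then matches the remaining parts, whence $\C_{\n_1}\approx\C_{\n_2}$.

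Your geometric plan points at the same ingredients---$\mu_{m,d}$-invariance forces the non-special components of $\Rext(\alpha)$ into $m$-element orbits, and $m\ge2$ makes the component through puncture~$1$ distinguishable---but the step ``modify by an ambient isotopy fixing the location of puncture~$1$'' is exactly where the content hides.  You cannot isotope $f$ to fix a prescribed point unless $f$ already carries the complementary region containing that point to the correct region on the $\beta$ side, and establishing \emph{that} (namely $d_0=e_0$) is precisely the multiset computation above.  So your sketch for (iv) is not wrong in spirit, but it defers the decisive step to an unspecified ``structural control''; the paper's combinatorial route via $\bar\alpha,\bar\beta$ and Corollary~\ref{cor:crs} makes this step explicit and bypasses the diffeomorphism-lifting issue entirely.
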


\begin{proof}
Let $\bar\alpha$ and $\bar\beta$ be the elements of $B_{d+1,1}$ such that
$\alpha=\psi_4'(\bar\alpha)$ and $\beta=\psi_4'(\bar\beta)$,
hence $\nu(\alpha)=\psi_5'(\bar\alpha)$ and $\nu(\beta)=\psi_5'(\bar\beta)$.
We have the following commutative diagram, where the isomorphisms
preserve the Nielsen-Thurston type
by Lemma~\ref{lem:dyn} and Corollary~\ref{cor:dyn}.
$$
\xymatrix{
Z(\nu(\omega))
&&
Z(\omega) \ar[ll]_\nu^\simeq \\
&
B_{d+1,1} \ar[ul]^{\psi_5'}_\simeq \ar[ur]_{\psi_4'}^\simeq
}\qquad
\xymatrix{
\nu(\alpha),~\nu(\beta) && \alpha,~\beta \ar@{|->}[ll]_\nu \\
&
\bar\alpha,~\bar\beta \ar@{|->}[ul]^{\psi_5'} \ar@{|->}[ur]_{\psi_4'}
}
$$

\smallskip(i)\ \
This follows from the fact that $\nu:Z(\omega)\to Z(\nu(\omega))$ is an isomorphism.

\smallskip(ii)\ \
Because $\nu$ preserves the Nielsen-Thurston type,
$(\alpha, \nu(\alpha))$ and $(\beta,\nu(\beta))$ are pairs of braids
with the same Nielsen-Thurston type.
By the hypothesis, either $(\alpha,\beta)$ or $(\nu(\alpha),\nu(\beta))$
is a pair of braids with the same Nielsen-Thurston type.
Therefore the braids $\alpha$, $\beta$, $\nu(\alpha)$ and $\nu(\beta)$ have
the same Nielsen-Thurston type.

\smallskip(iii)\ \
Suppose that $\alpha$ is pseudo-Anosov.
Then $\beta$, $\nu(\alpha)$ and $\nu(\beta)$ are pseudo-Anosov by (ii).
If $\alpha$ and $\beta$ are conjugate in $B_{n}$,
they are conjugate in $Z(\omega)$ by Proposition~\ref{prop:pAconj}.
If $\nu(\alpha)$ and $\nu(\beta)$ are conjugate in $B_{n-1}$,
they are conjugate in $Z(\nu(\omega))$ by Proposition~\ref{prop:pAconj},
hence $\alpha$ and $\beta$ are conjugate in $Z(\omega)$ by (i).

Suppose that $\alpha$ is periodic.
Then $\beta$, $\nu(\alpha)$ and $\nu(\beta)$ are periodic by (ii).
Since $\alpha$, $\beta$, $\nu(\alpha)$ and $\nu(\beta)$
are periodic and pure, they are central.
If $\alpha$ and $\beta$ are conjugate in $B_{n}$,
then $\alpha =\beta$.
If $\nu(\alpha)$ and $\nu(\beta)$ are conjugate in $B_{n-1}$,
then $\nu(\alpha) = \nu(\beta)$, hence $\alpha =\beta$.

Suppose that $d=1$.
Because $\bar\alpha\in B_{2,1}=\{\sigma_1^{2k}\mid k\in\Z\}=\{\Delta_{(2)}^{2k}\mid k\in\Z\}$,
$\bar\alpha$ is periodic.
Hence $\alpha$ is periodic.
From the above discussion, $\alpha$ and $\beta$ are conjugate in $Z(\omega)$.

\smallskip(iv)\ \
The braids $\nu(\alpha)$, $\beta$ and $\nu(\beta)$ are
non-periodic and reducible by~(ii).

If $\alpha$ and $\beta$ are conjugate in $B_n$, then
$\Rext(\alpha)$ and $\Rext(\beta)$ are of the same type.

Suppose that $\nu(\alpha)$ and $\nu(\beta)$ are conjugate in $B_{n-1}$.
Taking conjugates of $\bar\alpha$ and $\bar\beta$
by elements of $B_{d+1,1}$ if necessary, we may assume that
$\Rext(\bar\alpha)$ and $\Rext(\bar\beta)$ are standard.
Let $\d_1 = (d_0 +1, d_1,\ldots, d_r)$ and $\d_2 = (e_0 +1, e_1,\ldots, e_s)$
be the compositions of $d+1$ such that $\Rext(\bar\alpha)=\C_{\d_1}$
and $\Rext(\bar\beta)=\C_{\d_2}$.
Let
$$\arraycolsep=2pt
\begin{array}{*7{l}}
\n_1 & = & ( md_0+1, \underbrace{d_1,\ldots, d_1}_{m}, \ldots,
\underbrace{d_r,\ldots, d_r}_{m}), &&
\n_1' & =& ( md_0, \underbrace{d_1,\ldots, d_1}_{m}, \ldots,
\underbrace{d_r,\ldots, d_r}_{m}), \\
\n_2 &=& ( me_0+1, \underbrace{e_1,\ldots, e_1}_{m}, \ldots,
\underbrace{e_s,\ldots, e_s}_{m}), &&
\n_2' &=& ( me_0, \underbrace{e_1,\ldots, e_1}_{m}, \ldots,
\underbrace{e_s,\ldots, e_s}_{m}),
\end{array}$$
where $md_0$ (resp.\ $me_0$) is deleted if $d_0 =0$ (resp. $e_0 =0$).
Let us write ``$\C_1\approx\C_2$''
if two curve systems $\C_1$ and $\C_2$ are of the same type.
By Corollary~\ref{cor:crs},
$$
\Rext(\alpha)\approx\C_{\n_1},\quad
\Rext(\beta)\approx\C_{\n_2},\quad
\Rext(\nu(\alpha))\approx\C_{\n_1'},\quad
\Rext(\nu(\beta))\approx\C_{\n_2'}.
$$
Since $\nu(\alpha)$ and $\nu(\beta)$ are conjugate in $B_{n-1}$,
one has $\Rext(\nu(\alpha))\approx\Rext(\nu(\beta))$,
and hence $\C_{\n_1'}\approx\C_{\n_2'}$.
Thus the following equality holds between multisets consisting of integers:
$$
\{ md_0, \underbrace{d_1,\ldots, d_1}_{m}, \ldots,
\underbrace{d_r,\ldots, d_r}_{m}\}
= \{ me_0, \underbrace{e_1,\ldots, e_1}_{m}, \ldots,
\underbrace{e_s,\ldots, e_s}_{m}\}.
$$
Since $m\ge 2$, one has $r=s$ and $d_0=e_0$ and there is an $r$-permutation $\theta$
such that $d_i=e_{\theta(i)}$ for $1\le i\le r$.
Therefore $\C_{\n_1}$ and $\C_{\n_2}$
are of the same type, hence so are $\Rext(\alpha)$ and $\Rext(\beta)$.
\end{proof}

By the above claim, the proposition holds if $d=1$,
or if $\alpha$ is periodic or pseudo-Anosov.
Thus we may assume that $d\ge 2$ and
that $\alpha$ is non-periodic and reducible.

\begin{claim}{Claim 2}
Without loss of generality, we may assume that
$\omega = \mu_{m,\d}$ for a composition $\d=(d_0+1,d_1,\ldots,d_r)$ of\/ $d+1$,
and that $\Rext(\alpha)=\Rext(\beta)=\C_{\n}$ where $\n= L_m(\mathbf d)$.
\end{claim}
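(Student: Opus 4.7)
The plan is to invoke Corollary~\ref{cor:sta} to simultaneously standardize both $\Rext(\alpha)$ and $\Rext(\beta)$ while conjugating $\omega=\mu_{m,d}$ into the form $\mu_{m,\d}$, and then to transfer the data by this conjugation in a way that preserves all of the hypotheses of the proposition.

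First I would check the hypotheses of Corollary~\ref{cor:sta}. Because $\alpha$ is non-periodic and reducible, $\Rext(\alpha)$ is a nonempty unnested essential curve system, and by Claim 1(ii) so is $\Rext(\beta)$. The commutation $\omega\alpha=\alpha\omega$ yields $\omega*\Rext(\alpha)=\Rext(\omega\alpha\omega^{-1})=\Rext(\alpha)$, and analogously for $\beta$; Claim 1(iv) guarantees that $\Rext(\alpha)$ and $\Rext(\beta)$ are of the same type. Corollary~\ref{cor:sta} then supplies a braid $\chi\in B_n$, elements $\chi_1,\chi_2\in Z(\mu_{m,d})$, and a composition $\d=(d_0+1,d_1,\ldots,d_r)$ of $d+1$ with $\chi\mu_{m,d}\chi^{-1}=\mu_{m,\d}$ and $(\chi\chi_1)*\Rext(\alpha)=(\chi\chi_2)*\Rext(\beta)=\C_{L_m(\d)}$.

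Next I would set $\omega'=\mu_{m,\d}$, $\alpha'=(\chi\chi_1)\alpha(\chi\chi_1)^{-1}$, $\beta'=(\chi\chi_2)\beta(\chi\chi_2)^{-1}$, and replace $(\alpha,\beta,\omega)$ by $(\alpha',\beta',\omega')$. Two successive applications of Lemma~\ref{lem:easy} place $\alpha',\beta'$ in $Z(\omega')$ and reduce the desired conjugacy $\alpha\sim\beta$ in $Z(\omega)$ to $\alpha'\sim\beta'$ in $Z(\omega')$. Purity is preserved under conjugation, so $\alpha',\beta'$ remain pure. By construction $\Rext(\alpha')=(\chi\chi_1)*\Rext(\alpha)=\C_{L_m(\d)}$ and similarly $\Rext(\beta')=\C_{L_m(\d)}$, delivering both assertions of the claim.

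The only point that needs care, and the spot I expect to be the main technical obstacle, is preserving the disjunctive hypothesis of the proposition under this replacement. If $\alpha$ and $\beta$ are conjugate in $B_n$, clearly so are $\alpha'$ and $\beta'$. If instead $\nu(\alpha)\sim\nu(\beta)$ in $B_{n-1}$, then in order to push this conjugacy through I need the conjugating braids $\chi\chi_1$ and $\chi\chi_2$ to lie in $B_{n,1}$, so that $\nu$ applied as a homomorphism converts the conjugation formulas into a conjugation in $B_{n-1}$. Here $\chi_1,\chi_2\in Z(\mu_{m,d})\subset B_{n,1}$ are $1$-pure by definition, while $\chi$ is represented by $\eta_{m,\d}\circ\eta_{m,d}^{-1}$; since both $\eta_{m,d}$ and $\eta_{m,\d}$ send the central puncture of $D_n^{(m)}$ to the first puncture of $D_n$, the braid $\chi$ fixes the first strand and is also $1$-pure. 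This $1$-purity check is the only delicate piece; the remainder is a direct packaging of Corollary~\ref{cor:sta} with Lemma~\ref{lem:easy}.
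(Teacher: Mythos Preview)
Your approach is essentially identical to the paper's: both invoke Corollary~\ref{cor:sta} on the $\mu_{m,d}$-invariant curve systems $\Rext(\alpha)$ and $\Rext(\beta)$ (same type by Claim~1(iv)) and then appeal to Lemma~\ref{lem:easy} to replace $(\alpha,\beta,\omega)$ by $(\chi\chi_1\alpha\chi_1^{-1}\chi^{-1},\,\chi\chi_2\beta\chi_2^{-1}\chi^{-1},\,\mu_{m,\d})$. You go beyond the paper in explicitly verifying that the disjunctive hypothesis on $\nu(\alpha),\nu(\beta)$ survives the replacement, via the $1$-purity of $\chi$ (from $\eta_{m,\d}$ and $\eta_{m,d}$ both sending the central puncture to the first puncture); the paper's proof of Claim~2 leaves this check implicit even though it is needed for the later claims.
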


\begin{proof}[Proof of Claim 2]
Recall that we have assumed that $\omega=\mu_{m,d}$.
By Claim~1~(iv), $\Rext(\alpha)$ and $\Rext(\beta)$ are of the same type.
They are invariant under the action of $\omega$.
By Corollary~\ref{cor:sta},
there exist $\chi\in B_n$, $\chi_1,\chi_2\in Z(\omega)$
and a composition $\d$ of $d+1$ such that
$\chi\omega\chi^{-1}=\mu_{m,\d}$ and
$(\chi\chi_1)*\Rext(\alpha) = (\chi\chi_2)*\Rext(\beta) = \C_{L_m(\d)}$.
Notice that $(\chi\chi_1)*\Rext(\alpha) = \Rext(\chi\chi_1\alpha\chi_1^{-1}\chi^{-1})$ and
$(\chi\chi_2)*\Rext(\beta) = \Rext(\chi\chi_2\beta\chi_2^{-1}\chi^{-1})$.
By Lemma~\ref{lem:easy}, it suffices to show that
$\chi\chi_1\alpha\chi_1^{-1}\chi^{-1}$
and $\chi\chi_2\beta\chi_2^{-1}\chi^{-1}$ are conjugate
in $Z(\chi\omega\chi^{-1})=Z(\mu_{m,\d})$.
\end{proof}

Let us assume the hypothesis stated in Claim 2.
By Corollary~\ref{cor:cent},
$\alpha$ and $\beta$ are written as
\begin{eqnarray*}
\alpha
&=& \myangle{\hat{\alpha}}_\n (\alpha_0\oplus m\alpha_1\oplus\cdots\oplus m\alpha_r)_\n,\\
\beta
&=& \myangle{\hat{\beta}}_\n(\beta_0\oplus m\beta_1\oplus\cdots\oplus m\beta_r)_\n,
\end{eqnarray*}
where $\hat{\alpha}, \hat{\beta} \in Z(\mu_{m,r})$ and
$\alpha_0, \beta_0 \in Z(\mu_{m, d_0})$.

Let $\lambda=\lambda_{m,d_0}$.
By the induction hypothesis, $\alpha_0$ and $\lambda (\alpha_0)$
(resp.\ $\beta_0$ and $\lambda (\beta_0)$) are conjugate in $Z(\mu_{m, d_0})$.
By Lemmas~\ref{lem:choice} and \ref{lem:easy}~(ii),
we may assume that $\alpha_i$'s and $\beta_i$'s
are conjugacy representatives, that is,
$\alpha_i=\lambda(\alpha_i)$ and  $\beta_i=\lambda(\beta_i)$
for $0\le i\le r$.

\smallskip
The braids $\nu(\alpha)$ and $\nu(\beta)$ are written differently
according to whether $d_0\ge 1$ or $d_0=0$.
Let $\n'$ be a compositions of $n-1$ such that
$$
\begin{array}{lcll}
\n' &=& (md_0,\underbrace{d_1,\ldots,d_1}_m, \ldots,\underbrace{d_r,\ldots,d_r}_m)
& \mbox{if $d_0\ge 1$},\\
\n' &=& (\underbrace{d_1,\ldots,d_1}_m, \ldots,\underbrace{d_r,\ldots,d_r}_m)
& \mbox{if $d_0=0$}.
\end{array}
$$
By Corollary~\ref{cor:crs},
$\Rext(\nu(\alpha))$ and $\Rext(\nu(\beta))$
are obtained from $\Rext(\alpha)$ and $\Rext(\beta)$, respectively,
by forgetting the first puncture.
Since $\Rext(\alpha)=\Rext(\beta)=\C_\n$, one has
$$
\Rext(\nu(\alpha))=\Rext(\nu(\beta))=\C_{\n'}.
$$

If $d_0\ge 1$, then $\nu(\mu_{m,\d})$, $\nu(\alpha)$ and $\nu(\beta)$ are written as
\begin{eqnarray*}
\nu(\mu_{m,\d})
&=&\myangle{\mu_{m,r}}_{\n'} (\nu(\mu_{m,d_0})\oplus
{\textstyle \bigoplus_{i=1}^r}
(\underbrace{\Delta_{(d_i)}^2\oplus1\oplus\cdots\oplus1}_m) )_{\n'},\\
\nu(\alpha)
&=&\myangle{\hat\alpha}_{\n'} (\nu(\alpha_0)\oplus
m\alpha_1\oplus\cdots\oplus m\alpha_r)_{\n'},\\
\nu(\beta)
&=&\myangle{\hat\beta}_{\n'} (\nu(\beta_0)\oplus
m\beta_1\oplus\cdots\oplus m\beta_r)_{\n'}.
\end{eqnarray*}
In this case, $\nu(\alpha_0),\nu(\beta_0)\in Z(\nu(\mu_{m,d_0}))$.

If $d_0=0$, then $\nu(\mu_{m,\d})$, $\nu(\alpha)$ and $\nu(\beta)$ are written as
\begin{eqnarray*}
\nu(\mu_{m,\d})
&=&\myangle{\nu(\mu_{m,r})}_{\n'} (
{\textstyle \bigoplus_{i=1}^r}
(\underbrace{\Delta_{(d_i)}^2\oplus1\oplus\cdots\oplus1}_m) )_{\n'},\\
\nu(\alpha)
&=&\myangle{\nu(\hat\alpha)}_{\n'} (
m\alpha_1\oplus\cdots\oplus m\alpha_r)_{\n'}\\
\nu(\beta)
&=&\myangle{\nu(\hat\beta)}_{\n'} (
m\beta_1\oplus\cdots\oplus m\beta_r)_{\n'}.
\end{eqnarray*}
In this case, $\nu(\hat\alpha),\nu(\hat\beta)\in Z(\nu(\mu_{m,r}))$.

\begin{claim}{Claim 3}
$\hat\alpha$ and $\hat\beta$ are conjugate in $B_{mr+1}$.
\end{claim}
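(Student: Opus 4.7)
The plan is to break into the two alternatives of the hypothesis, and inside the $\nu$-conjugacy alternative to split further according to whether $d_0\ge 1$ or $d_0=0$.

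First I would observe that the purity of $\alpha$ and $\beta$ forces $\hat\alpha$ and $\hat\beta$ themselves to be pure: if $\pi_{\hat\alpha}(i)=j\ne i$, then all $n_i$ strands corresponding to block~$i$ of $\n$ would end in block~$j$ at the top, contradicting purity of $\alpha$. In particular $\hat\alpha$ and $\hat\beta$ are $1$-pure, so $\nu(\hat\alpha)$ and $\nu(\hat\beta)$ are defined.

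If $\alpha$ and $\beta$ are conjugate in $B_n$ by some $\gamma$, then since $\Rext(\alpha)=\Rext(\beta)=\C_\n$, Lemma~\ref{lem:Rext}(ii) applies and gives $\gamma\ast\C_\n=\C_\n$ together with $\hat\beta=\Ext_\n(\gamma)\,\hat\alpha\,\Ext_\n(\gamma)^{-1}$ in $B_{mr+1}$, which settles this case.

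If instead $\nu(\alpha)$ and $\nu(\beta)$ are conjugate in $B_{n-1}$ by some $\gamma'$, then $\Rext(\nu(\alpha))=\Rext(\nu(\beta))=\C_{\n'}$, so Lemma~\ref{lem:Rext}(ii) yields
\[
\Ext_{\n'}(\nu(\beta))=\Ext_{\n'}(\gamma')\,\Ext_{\n'}(\nu(\alpha))\,\Ext_{\n'}(\gamma')^{-1}.
\]
When $d_0\ge 1$, the explicit decomposition of $\nu(\alpha)$ and $\nu(\beta)$ recorded above gives $\Ext_{\n'}(\nu(\alpha))=\hat\alpha$ and $\Ext_{\n'}(\nu(\beta))=\hat\beta$, so we are done directly in $B_{mr+1}$. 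When $d_0=0$, the corresponding decomposition instead yields $\Ext_{\n'}(\nu(\alpha))=\nu(\hat\alpha)$ and $\Ext_{\n'}(\nu(\beta))=\nu(\hat\beta)$, so $\nu(\hat\alpha)$ and $\nu(\hat\beta)$ are conjugate in $B_{mr}$.

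In this last subcase I invoke the induction hypothesis on $d$. Since $\alpha$ is non-periodic and reducible, $\C_\n$ is nonempty, and because $d_0=0$ this forces some $d_i$ with $i\ge 1$ to be at least $2$; from $d_1+\cdots+d_r=d$ we then get the strict descent $r<d$. Applying the inductive statement of Proposition~\ref{prop:key} at level $r$ to the pure braids $\hat\alpha,\hat\beta\in Z(\mu_{m,r})$, whose $\nu$-images are conjugate in $B_{mr}$, delivers conjugacy of $\hat\alpha$ and $\hat\beta$ in $Z(\mu_{m,r})$, and a fortiori in $B_{mr+1}$.

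The main thing to check carefully is the correct reading of $\Ext_{\n'}(\nu(\alpha))$ from the two different shapes that $\nu(\alpha)$ takes according as $d_0\ge 1$ or $d_0=0$; everything else is either a direct appeal to Lemma~\ref{lem:Rext}(ii) or the verification of the strict inequality $r<d$ that makes the induction go through.
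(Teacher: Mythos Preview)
Your proof is correct and follows essentially the same route as the paper: split on the two alternatives of the hypothesis, use Lemma~\ref{lem:Rext}(ii) together with $\Rext=\C_\n$ (resp.\ $\C_{\n'}$) to pass to exterior braids, and in the subcase $d_0=0$ invoke the induction hypothesis on~$r$. Your explicit verification that $r<d$ when $d_0=0$ (via nonemptiness of $\C_\n$) is a detail the paper leaves implicit, and your preliminary purity observation is a bit more than needed here since $\hat\alpha,\hat\beta\in Z(\mu_{m,r})$ already gives $1$-purity, but neither affects the argument.
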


\begin{proof}[Proof of Claim 3]
If $\alpha$ and $\beta$ are conjugate in $B_n$, then
the $\n$-exterior braids
$\hat\alpha$ and $\hat\beta$ are conjugate in $B_{mr+1}$
by Lemma~\ref{lem:Rext}.

Suppose that $\nu(\alpha)$ and $\nu(\beta)$ are conjugate in $B_{n-1}$.
If $d_0\ge 1$, then $\hat\alpha$ and $\hat\beta$ are conjugate in $B_{mr+1}$
by Lemma~\ref{lem:Rext}
because $\Rext(\nu(\alpha))=\Rext(\nu(\beta))=\C_{\n'}$
and the $\n'$-exterior braids of $\nu(\alpha)$ and $\nu(\beta)$
are $\hat\alpha$ and $\hat\beta$.
Similarly, if $d_0=0$,
then $\nu(\hat\alpha)$ and $\nu(\hat\beta)$ are conjugate in $B_{mr}$,
hence $\hat\alpha$ and $\hat\beta$ are conjugate in $Z(\mu_{m,r})$
by the induction hypothesis.
\end{proof}

Since $\Rext(\alpha )=\C_{\n}$ and $\Ext_{\n}(\alpha)= \hat\alpha$,
the $\n$-exterior braid
$\hat\alpha$ is either periodic or pseudo-Anosov.

\case{Case 1}{$\hat{\alpha}$ is periodic.}
Let $k=mr+1$.
Since $\hat{\alpha}$ and $\hat{\beta}$ are pure and periodic,
they are central.
Because they are conjugate by the above claim, $\hat{\alpha}=\hat{\beta}=\Delta_{(k)}^{2u}$
for some integer $u$.

\begin{claim}{Claim 4}
$\alpha_0=\beta_0$ and there is an $r$-permutation $\theta$
such that $\beta_i=\alpha_{\theta(i)}$ for $1\le i\le r$.
\end{claim}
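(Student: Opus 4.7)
The plan is to extract, from each hypothesis, a bijection $\pi$ between the ``inner positions'' of $\alpha$ and $\beta$ that respects the inner braid data up to conjugacy, and then use a mod-$m$ counting argument to pin down $\alpha_0=\beta_0$ together with multiset equality of the remaining inner braids.

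The first step is the reduction to component identities. When $\gamma\in B_n$ conjugates $\alpha$ to $\beta$, Lemma~\ref{lem:Rext} forces $\gamma\ast\C_\n=\C_\n$, so $\gamma$ admits the same wreath-product decomposition as $\alpha$ and $\beta$. Because $\hat\alpha=\Delta^{2u}_{(mr+1)}$ is central and in particular pure, the calculus of Lemma~\ref{lem:decom} collapses the equation $\gamma\alpha\gamma^{-1}=\beta$ to component identities of the form $\beta^{(i)}=\gamma^{(\pi^{-1}(i))}\alpha^{(\pi^{-1}(i))}(\gamma^{(\pi^{-1}(i))})^{-1}$ with $\pi=\pi_{\hat\gamma}$ a permutation preserving $\n$, where $\alpha^{(1)}=\alpha_0$ and $\alpha^{((i-1)m+1+j)}=\alpha_i$ for $1\le i\le r$, $1\le j\le m$ (and analogously for $\beta$). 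Thus $\alpha^{(j)}$ and $\beta^{(\pi(j))}$ are conjugate in $B_{n_j}$ for every $j$, and since every $\alpha_i$ and $\beta_i$ is a $\lambda_{m,d_0}$-representative, this upgrades to equality $\alpha^{(j)}=\beta^{(\pi(j))}$.

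Next comes the counting step. For each braid value $v$, the bijection $\pi$ preserves the number of positions carrying value $v$; on the source this count equals $[\alpha_0=v]+m\cdot|\{i\ge 1:\alpha_i=v\}|$, and on the target the analogous expression with $\beta$. Setting $v=\alpha_0$ makes the source $\equiv 1\pmod m$, so the target must be too, and since $m\ge 2$ this forces $[\beta_0=\alpha_0]=1$, i.e., $\alpha_0=\beta_0$. Cancelling the now-equal terms $[\alpha_0=v]=[\beta_0=v]$ in the remaining equations yields $\{\alpha_i\}_{i\ge 1}=\{\beta_i\}_{i\ge 1}$ as multisets, from which the permutation $\theta$ is immediate.

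For the other hypothesis $\nu(\alpha)\sim\nu(\beta)$ in $B_{n-1}$, I rerun the same argument with $\n'$, $\nu(\alpha_0)$, $\nu(\beta_0)$ replacing $\n$, $\alpha_0$, $\beta_0$. The one subtlety is that $\nu(\alpha_0)$ need not be a $\lambda_{m,d_0}$-representative, so one counts by $\lambda$-values and concludes only $\nu(\alpha_0)\sim\nu(\beta_0)$ in $B_{md_0}$; at that moment I invoke the induction hypothesis of Proposition~\ref{prop:key} on $d$ (valid because $r\ge 1$ ensures $d_0<d$) applied to $(\alpha_0,\beta_0,\mu_{m,d_0})$ to upgrade this to $\alpha_0\sim\beta_0$ inside $Z(\mu_{m,d_0})$, after which $\lambda_{m,d_0}$-representativity gives $\alpha_0=\beta_0$. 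The case $d_0=0$ is vacuous since $Z(\mu_{m,0})$ is trivial. The main obstacle is that $\pi$ is not required to fix position $1$: it may freely swap position $1$ with any block $i$ of matching size $md_0+1$. The mod-$m$ parity is precisely what discriminates the uniquely ``unduplicated'' position $1$ from the $m$-fold identical interior blocks, and makes $\alpha_0=\beta_0$ fall out without any hypothesis on $\pi(1)$.
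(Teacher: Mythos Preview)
Your argument is correct and follows essentially the same route as the paper's proof: compare the interior braids as a multiset (up to conjugacy), use the mod-$m$ parity coming from $m\ge 2$ to single out the $0$-th interior braid, invoke the induction hypothesis on $d_0<d$ in the $\nu$-case, and finish via the $\lambda_{m,d_0}$-representative normalization. The only notable difference is that where the paper cites \cite[Proposition~3.2]{Gon03} for the multiset equality of interior conjugacy classes, you derive the component-wise conjugacy $\beta^{(i)}\sim\alpha^{(\pi^{-1}(i))}$ directly from the wreath-product calculus of Lemma~\ref{lem:decom} (which is straightforward here since $\hat\alpha=\Delta_{(mr+1)}^{2u}$ is central), and you spell out the mod-$m$ counting that the paper compresses into ``since $m\ge 2$''.
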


\begin{proof}[Proof of Claim 4]
If $\alpha$ and $\beta$ are conjugate in $B_n$, then
the following equality holds
between multisets consisting of conjugacy classes
by J.~Gonz\'alez-Meneses~\cite[Proposition 3.2]{Gon03}:
$$
\Bigl\{[\alpha_0], \underbrace{[\alpha_1],\ldots, [\alpha_1]}_m ,\ldots,
\underbrace{[\alpha_r],\ldots, [\alpha_r]}_m \Bigr\}
=\Bigl\{[\beta_0], \underbrace{[\beta_1],\ldots, [\beta_1]}_m ,\ldots,
\underbrace{[\beta_r],\ldots, [\beta_r]}_m \Bigr\}.
$$
Since $m\ge 2$,  $[\alpha_0]=[\beta_0]$
and there is an $r$-permutation $\theta$
such that $[\beta_i]=[\alpha_{\theta(i)}]$ for $1\le i\le r$.
By the induction hypothesis, $\alpha_0$ and $\beta_0$ are conjugate in $Z(\mu_{m,d_0})$.

If $\nu(\alpha)$ and $\nu(\beta)$ are conjugate in $B_{n-1}$ and $d_0\ge 1$,
then the following equality holds
between multisets consisting of conjugacy classes:
$$
\Bigl\{[\nu(\alpha_0)], \underbrace{[\alpha_1],\ldots, [\alpha_1]}_m ,\ldots,
\underbrace{[\alpha_r],\ldots, [\alpha_r]}_m \Bigr\}
=\Bigl\{[\nu(\beta_0)], \underbrace{[\beta_1],\ldots, [\beta_1]}_m ,\ldots,
\underbrace{[\beta_r],\ldots, [\beta_r]}_m \Bigr\}.
$$
Since $m\ge 2$, $[\nu(\alpha_0)]=[\nu(\beta_0)]$
and there is an $r$-permutation $\theta$
such that $[\beta_i]=[\alpha_{\theta(i)}]$ for $1\le i\le r$.
By the induction hypothesis, $\alpha_0$ and $\beta_0$ are conjugate in $Z(\mu_{m,d_0})$.

If $\nu(\alpha)$ and $\nu(\beta)$ are conjugate in $B_{n-1}$ and $d_0=0$,
then the following equality holds
between multisets consisting of conjugacy classes:
$$
\Bigl\{\underbrace{[\alpha_1],\ldots, [\alpha_1]}_m ,\ldots,
\underbrace{[\alpha_r],\ldots, [\alpha_r]}_m \Bigr\}
=\Bigl\{\underbrace{[\beta_1],\ldots, [\beta_1]}_m ,\ldots,
\underbrace{[\beta_r],\ldots, [\beta_r]}_m \Bigr\}.
$$
Then there is an $r$-permutation $\theta$
such that $[\beta_i]=[\alpha_{\theta(i)}]$ for $1\le i\le r$.
In this case, $\alpha_0$ and $\beta_0$ are the unique braid with one strand.

From the above three cases, we can see that
$\alpha_0$ is conjugate to $\beta_0$ in $Z(\mu_{m,d_0})$
and there is an $r$-permutation $\theta$ such that
$\beta_i$ is conjugate to $\alpha_{\theta(i)}$ for $1\le i\le r$.
Because we have assumed that each $\n$-interior braid of $\alpha$ and $\beta$
is the conjugacy representative with respect to $(m, d_0)$,
one has $\alpha_0=\beta_0$ and $\beta_i=\alpha_{\theta(i)}$ for $1\le i\le r$.
\end{proof}

In the above claim, $\theta$ must satisfy $d_i = d_{\theta(i)}$ for all $1\le i\le r$.
By the claim, we have
\begin{eqnarray*}
\alpha &=& \myangle{\Delta_{(k)}^{2u}}_\n (\alpha_0\oplus
m\alpha_1\oplus\cdots\oplus m\alpha_r)_\n, \\
\beta &=& \myangle{\Delta_{(k)}^{2u}}_\n (\alpha_0\oplus
m\alpha_{\theta(1)}\oplus\cdots\oplus m\alpha_{\theta(r)})_\n.
\end{eqnarray*}
By Lemma~\ref{lem:per},
there exists $\hat\zeta\in Z(\mu_{m,r})$ such that
the induced permutation of $\hat\zeta$ fixes $x_0$ and
sends $x_{i,j}$ to $x_{\theta(i), j}$
for $1\le i\le r$ and $1\le j\le m$ under the notation of the lemma.
Let $\zeta=\myangle{\hat\zeta}_\n$.
Then $\zeta\ast\C_{\n}=\C_{\n}$ because $d_i = d_{\theta(i)}$ for all $1\le i\le r$,
hence $\zeta\in Z(\mu_{m,\d})$ by Lemma~\ref{lem:cent}.
On the other hand,
\begin{eqnarray*}
\zeta\beta\zeta^{-1}
&=& \myangle{\hat\zeta}_\n
\myangle{\Delta_{(k)}^{2u}}_\n (\alpha_0\oplus
m\alpha_{\theta(1)} \oplus\cdots\oplus m\alpha_{\theta(r)})_\n
\myangle{\hat\zeta}_\n^{-1}\\
&=& \myangle{\hat\zeta}_\n
\myangle{\Delta_{(k)}^{2u}}_\n \myangle{\hat\zeta}_\n^{-1}
(\alpha_0\oplus
m\alpha_{1}\oplus\cdots\oplus m\alpha_{r})_\n\\
&=& \myangle{\Delta_{(k)}^{2u}}_\n (\alpha_0\oplus
m\alpha_{1}\oplus\cdots\oplus m\alpha_{r})_\n
=\alpha.
\end{eqnarray*}
Therefore $\alpha$ and $\beta$ are conjugate in $Z(\mu_{m,\d})$.

\case{Case 2}{$\hat{\alpha}$ is pseudo-Anosov.}

\begin{claim}{Claim 5}
The braids $\alpha$ and $\beta$ are conjugate in $B_n$.
\end{claim}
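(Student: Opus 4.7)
The plan is to use the hypothesis that either $\alpha,\beta$ are conjugate in $B_n$ (in which case Claim~5 is trivial) or $\nu(\alpha),\nu(\beta)$ are conjugate in $B_{n-1}$, so I focus on the second possibility. Choose $\gamma'\in B_{n-1}$ with $\gamma'\nu(\alpha)\gamma'^{-1}=\nu(\beta)$. Because $\Rext(\nu(\alpha))=\Rext(\nu(\beta))=\C_{\n'}$, we have $\gamma'\ast\C_{\n'}=\C_{\n'}$, so $\gamma'$ decomposes with respect to $\n'$ and in particular has a well-defined $\n'$-exterior braid $\hat{\gamma'}$. The whole idea is to upgrade this $B_{n-1}$-conjugation to a $B_n$-conjugation by producing an appropriate exterior braid together with interior conjugators.

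First I extract an element $\hat\gamma\in Z(\mu_{m,r})$ with $\hat\gamma\hat\alpha\hat\gamma^{-1}=\hat\beta$. If $d_0\ge 1$, then Lemma~\ref{lem:Rext} identifies the $\n'$-exterior braids of $\nu(\alpha),\nu(\beta)$ with $\hat\alpha,\hat\beta$, so $\hat{\gamma'}$ itself conjugates $\hat\alpha$ to $\hat\beta$; since $\hat\alpha$ is pseudo-Anosov and $\hat\alpha,\hat\beta\in Z(\mu_{m,r})$, Proposition~\ref{prop:pAconj} forces $\hat{\gamma'}\in Z(\mu_{m,r})$ and I set $\hat\gamma:=\hat{\gamma'}$. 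If $d_0=0$, the $\n'$-exterior braids of $\nu(\alpha),\nu(\beta)$ are $\nu(\hat\alpha),\nu(\hat\beta)$, which by Corollary~\ref{cor:dyn} are pseudo-Anosov and, by Lemma~\ref{lem:Rext}, are conjugate in $B_{mr}$; the induction hypothesis on $r<d$ then delivers a conjugator $\hat\gamma\in Z(\mu_{m,r})$.

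Next I confirm that $\hat\gamma\ast\n=\n$. By Lemma~\ref{lem:per}, $\pi_{\hat\gamma}(x_0)=x_0$ and $\pi_{\hat\gamma}(x_{i,j})=x_{k_i,j-\ell_i}$; the identity $\hat\gamma\ast\n=\n$ reduces to $d_{k_i}=d_i$ for all $i$. This equality follows from the fact that $\gamma'$, when acting on $\C_{\n'}$, must send each component to another component enclosing the same number of punctures, so the multiset structure of the $d_i$'s is preserved by the permutation $k$. Once $\hat\gamma\ast\n=\n$ is in place, the standard invariance of the multiset of conjugacy classes of interior braids under ambient conjugation (the same principle used in Claim~4, via \cite[Proposition 3.2]{Gon03}) yields that $\alpha_0$ is conjugate to $\beta_0$ in the relevant braid group and $\alpha_i$ is conjugate to $\beta_{k_i}$ in $B_{d_i}$.

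Finally I assemble the conjugator. Pick $\gamma_0$ realizing the $\alpha_0\sim\beta_0$ conjugacy and $\gamma_i$ realizing $\gamma_i\alpha_i\gamma_i^{-1}=\beta_{k_i}$, and set
\[
\gamma = \langle\hat\gamma\rangle_{\n}\,(\gamma_0 \oplus m\gamma_1 \oplus \cdots \oplus m\gamma_r)_{\n}.
\]
A direct computation using Lemma~\ref{lem:decom} (most crucially part (iii), which moves the interior block at position $(i,j)$ to position $(k_i,j-\ell_i)$), together with the purity of $\hat\alpha,\hat\beta$, gives $\gamma\alpha\gamma^{-1}=\beta$, establishing Claim~5. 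The main obstacle in the argument is the permutation bookkeeping needed to verify both $d_{k_i}=d_i$ and the final identity; once $\hat\gamma$ is produced via Proposition~\ref{prop:pAconj} (for $d_0\ge 1$) or the induction hypothesis (for $d_0=0$), the remaining steps are routine but delicate applications of Lemmas~\ref{lem:decom} and~\ref{lem:per}.
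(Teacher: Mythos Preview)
Your treatment of the case $d_0\ge 1$ is essentially the paper's, but the case $d_0=0$ has a genuine gap. There you produce $\hat\gamma\in Z(\mu_{m,r})$ via the \emph{induction hypothesis}, which only guarantees that \emph{some} conjugator exists in $Z(\mu_{m,r})$; this $\hat\gamma$ need not have any relation to the exterior braid $\hat{\gamma'}=\Ext_{\n'}(\gamma')$ of the actual $(n-1)$-braid conjugator. Your subsequent argument that $d_{k_i}=d_i$ appeals to the fact that $\gamma'$ preserves $\C_{\n'}$, but that constrains the permutation of $\hat{\gamma'}$, not of your $\hat\gamma$. Concretely, if $\hat\alpha=\hat\beta$ and $\hat\alpha=\rho^2$ for a pseudo-Anosov $\rho\in Z(\mu_{m,r})$ whose induced permutation swaps two $\mu_{m,r}$-orbits corresponding to distinct $d_i$'s, then $\hat\gamma=\rho$ is a legitimate output of the induction hypothesis yet fails $\hat\gamma*\n=\n$. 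The same disconnect invalidates your appeal to \cite[Proposition~3.2]{Gon03} for $\alpha_i\sim\beta_{k_i}$: that result gives only a multiset equality of interior conjugacy classes, not the specific matching indexed by the permutation $k$ coming from your $\hat\gamma$.

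The paper avoids this by staying with the actual conjugator throughout: in the $d_0=0$ case it applies Proposition~\ref{prop:pAconj} to the pseudo-Anosov braids $\nu(\hat\alpha),\nu(\hat\beta)\in Z(\nu(\mu_{m,r}))$ to force $\hat{\gamma'}\in Z(\nu(\mu_{m,r}))$, then lifts $\hat{\gamma'}$ via the isomorphism $\nu:Z(\mu_{m,r})\to Z(\nu(\mu_{m,r}))$. Because this lift has the same permutation on the $x_{i,j}$'s as $\hat{\gamma'}$, both $d_{k_i}=d_i$ and the interior matchings are inherited directly from the equation $\gamma'\nu(\alpha)\gamma'^{-1}=\nu(\beta)$, and the original interior pieces $\gamma_{i,j}$ can be reused unchanged. (In the $d_0\ge 1$ case one similarly needs the induction hypothesis once more to pass from $\nu(\alpha_0)\sim\nu(\beta_0)$ to $\alpha_0\sim\beta_0$, a step you also leave implicit.)
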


\begin{proof}[Proof of Claim 5]
If $\alpha$ and $\beta$ are conjugate in $B_n$, there is nothing to prove.
Therefore, assume that $\nu(\alpha)$ and $\nu(\beta)$ are conjugate in $B_{n-1}$.
Let $\gamma$ be an $(n-1)$-braid with $\nu(\beta)=\gamma^{-1}\nu(\alpha)\gamma$.
Since $\Rext(\nu(\alpha))=\Rext(\nu(\beta))=\C_{\n'}$,
one has $\gamma*\C_{\n'}=\C_{\n'}$ and
$\Ext_{\n'}(\nu(\beta))
=\Ext_{\n'}(\gamma)^{-1}\Ext_{\n'}(\nu(\alpha))\Ext_{\n'}(\gamma)$
by Lemma~\ref{lem:Rext}.

\smallskip

First, suppose that $d_0\ge 1$.
Because $\gamma*\C_{\n'}=\C_{\n'}$, $\gamma$ is of the form
$$\gamma
=\myangle{\hat\gamma}_{\n'} (\gamma_0\oplus
\underbrace{\gamma_{1,1}\oplus\cdots\oplus\gamma_{1,m}}_m\oplus\cdots\oplus
\underbrace{\gamma_{r,1}\oplus\cdots\oplus\gamma_{r,m}}_m)_{\n'}.
$$
In this case, $\hat\beta=\hat\gamma^{-1}\hat\alpha\hat\gamma$.
Because $\hat\alpha$ and $\hat\beta$ are pseudo-Anosov braids in $Z(\mu_{m,r})$,
one has $\hat\gamma\in Z(\mu_{m,r})$ by Proposition~\ref{prop:pAconj}.
In particular, $\hat\gamma$ is 1-pure by Lemma~\ref{lem:per}.
Since $\nu(\beta)=\gamma^{-1}\nu(\alpha)\gamma$ and
since $\hat\alpha$, $\hat\beta$ and $\hat\gamma$ are all 1-pure,
we have $\nu(\beta_0)=\gamma_0^{-1}\nu(\alpha_0)\gamma_0$.
By the induction hypothesis, there exists $\gamma_0'\in Z(\mu_{m,d_0})$ such that
$\beta_0=\gamma_0'^{-1}\alpha_0\gamma_0'$.
Let $\gamma'$ be the $n$-braid obtained from $\gamma$ by replacing
$\n'$ with $\n$ and $\gamma_0$ with $\gamma_0'$, that is,
$$\gamma'
=\myangle{\hat\gamma}_{\n} (\gamma_0'\oplus
\underbrace{\gamma_{1,1}\oplus\cdots\oplus\gamma_{1,m}}_m\oplus\cdots\oplus
\underbrace{\gamma_{r,1}\oplus\cdots\oplus\gamma_{r,m}}_m)_{\n}.
$$
We can easily show $\beta=\gamma'^{-1}\alpha\gamma'$
by using the fact that $\nu(\beta)=\gamma^{-1}\nu(\alpha)\gamma$,
$\hat\gamma$ is 1-pure, and
$\beta_0=\gamma_0'^{-1}\alpha_0\gamma_0'$.

\smallskip
Now, suppose that $d_0=0$.
Because $\gamma*\C_{\n'}=\C_{\n'}$, $\gamma$ is of the form
$$
\gamma
=\myangle{\hat\gamma}_{\n'} (
\underbrace{\gamma_{1,1}\oplus\cdots\oplus\gamma_{1,m}}_m\oplus\cdots\oplus
\underbrace{\gamma_{r,1}\oplus\cdots\oplus\gamma_{r,m}}_m)_{\n'}.
$$
In this case, $\nu(\hat \beta)=\hat\gamma^{-1}\nu(\hat\alpha)\hat\gamma$.
Since $\hat\alpha$ and $\hat\beta$ are pseudo-Anosov braids in $Z(\mu_{m,r})$,
$\nu(\hat \alpha)$ and $\nu(\hat \beta)$ are pseudo-Anosov
braids in $Z(\nu(\mu_{m,r}))$ by Corollary~\ref{cor:dyn}.
Thus $\hat\gamma\in Z(\nu(\mu_{m,r}))$ by Proposition~\ref{prop:pAconj}.
Because $\nu : Z(\mu_{m,r})\to Z(\nu(\mu_{m,r}))$ is an isomorphism,
there exists $\hat\gamma'\in Z(\mu_{m,r})$ with $\nu(\hat\gamma')=\hat\gamma$.
Then $\hat \beta=\hat\gamma'^{-1}\hat\alpha\hat\gamma'$.
In addition, $\hat\gamma'$ is 1-pure by Lemma~\ref{lem:per}.
Since $d_0=0$, both $\alpha_0$ and $\beta_0$ are the unique braid with
one strand.
Let $\gamma'$ be the $n$-braid obtained from $\gamma$ by replacing
$\n'$ with $\n$ and $\hat\gamma$ with $\hat\gamma'$ and by inserting
the trivial 1-braid as follows:
$$
\gamma'
=\myangle{\hat\gamma'}_{\n} (1\oplus
\underbrace{\gamma_{1,1}\oplus\cdots\oplus\gamma_{1,m}}_m\oplus\cdots\oplus
\underbrace{\gamma_{r,1}\oplus\cdots\oplus\gamma_{r,m}}_m)_{\n}.
$$
We can easily show $\beta=\gamma'^{-1}\alpha\gamma'$
by using the fact that $\nu(\beta)=\gamma^{-1}\nu(\alpha)\gamma$,
$\hat \beta=\hat\gamma'^{-1}\hat\alpha\hat\gamma'$,
and $\hat\gamma'$ is 1-pure.
\end{proof}

Recall that $\alpha$ and $\beta$ are of the form
\begin{eqnarray*}
\alpha
&=& \myangle{\hat{\alpha}}_\n (\alpha_0\oplus m\alpha_1\oplus\cdots\oplus m\alpha_r)_\n,\\
\beta
&=& \myangle{\hat{\beta}}_\n(\beta_0\oplus m\beta_1\oplus\cdots\oplus m\beta_r)_\n,
\end{eqnarray*}
where $\hat{\alpha}, \hat{\beta} \in Z(\mu_{m,r})$,
$\alpha_0, \beta_0 \in Z(\mu_{m, d_0})$,
$\alpha_i = \lambda(\alpha_i)$ and $\beta_i = \lambda(\beta_i)$
for $0\le i\le r$.

Because $\alpha$ and $\beta$ are conjugate in $B_n$ by the above claim,
there is an $n$-braid $\gamma$ such that
$$
\beta=\gamma\alpha\gamma^{-1}.
$$
Since $\Rext(\alpha)=\Rext(\beta)=\C_\n$, we have $\gamma*\C_\n=\C_\n$,
hence $\gamma$ is of the form
$$
\gamma
=\myangle{\hat\gamma}_{\n} (\gamma_0\oplus
\underbrace{\gamma_{1,1}\oplus\cdots\oplus\gamma_{1,m}}_m\oplus\cdots\oplus
\underbrace{\gamma_{r,1}\oplus\cdots\oplus\gamma_{r,m}}_m)_{\n}.
$$
Since $\hat{\alpha}$ and $\hat{\beta}$ are pseudo-Anosov braids in $Z(\mu_{m,r})$
and $\hat{\beta}=\Ext_\n(\beta)=\Ext_\n(\gamma\alpha\gamma^{-1})
=\hat{\gamma}\hat{\alpha}\hat{\gamma}^{-1}$, we have
$\hat{\gamma}\in Z(\mu_{m,r})$ by Proposition~\ref{prop:pAconj}.
Let $\theta$ be the $(mr+1)$-permutation induced by $\hat\gamma$.
By Lemma~\ref{lem:per}, $\theta(x_0)=x_0$ and
there exist integers $1\le k_i\le r$
and $0\le \ell_i<m$ such that
$\theta(x_{i,j})=x_{k_i, j-\ell_i}$ for $1\le i\le r$ and $1\le j\le m$.
Define
$$
\chi = \myangle{\hat{\gamma}}_\n
( 1\oplus
{\textstyle\bigoplus_{i=1}^r}
(\underbrace{\Delta_{(d_i)}^2\oplus\cdots\oplus\Delta_{(d_i)}^2}_{\ell_i}
\oplus \underbrace{1\oplus\cdots\oplus 1}_{m-\ell_i}) )_\n.$$
Since $\chi\ast\C_{\n}
=\myangle{\hat\gamma}_\n*\C_\n
=\gamma*\C_\n=\C_{\n}$,
one has $d_i=d_{k_i}$ for $1\le i\le r$.
Thus $\chi\in Z(\mu_{m,\d})$ by Lemma~\ref{lem:cent}.
Let
$$
\beta'=\chi^{-1}\beta\chi.
$$
Since $\beta,\chi\in Z(\mu_{m,\d})$ with $\beta*\C_\n=\chi*\C_\n=\C_\n$,
one has $\beta'\in Z(\mu_{m,\d})$ with $\beta'*\C_\n =\C_\n$.
Since $\Ext_\n(\beta') =\Ext_\n(\chi^{-1} \beta \chi)
=\hat\gamma^{-1}\hat\beta\hat\gamma=\hat\alpha$ is pure,
$\beta'$ is of the following form by Corollary~\ref{cor:cent}:
$$
\beta' = \myangle{\hat{\alpha}}_\n(\beta_0'\oplus
m\beta_1'\oplus\cdots\oplus m\beta_r')_\n,
\qquad
\beta_0' \in Z(\mu_{m, d_0}).
$$

Since both $\Ext_\n(\beta)$ and $\Ext_\n(\chi)$ are 1-pure,
one has $\beta_0'=\beta_0$, hence
$\beta_0'=\beta_0=\lambda(\beta_0)=\lambda(\beta_0')$.
By Lemma~\ref{lem:choice}, there exists an $\n$-split braid $\chi'$ in $Z(\mu_{m, \d})$
such that
\begin{equation}\label{eq:1}
\beta''
=\chi'^{-1}\beta'\chi'
= \myangle{\hat{\alpha}}_\n(\beta_0''\oplus
m\beta_1''\oplus\cdots\oplus m\beta_r'')_\n,
\end{equation}
where $\beta_i''$'s are conjugacy representatives,
that is, $\beta_i''=\lambda(\beta_i'')$ for $0\le i\le r$.

\smallskip
Now we will show that $\beta''=\alpha$.
Let $\chi''=\gamma^{-1}\chi$.
Then $\chi''*\C_\n=\C_\n$.
Because $\Ext_\n(\chi'')=\Ext_\n(\gamma)^{-1}\Ext_\n(\chi)=\hat\gamma^{-1}\hat\gamma=1$,
$\chi''$ is an $\n$-split braid.
Note that
$$\beta''=\chi'^{-1}\beta'\chi'
=\chi'^{-1}\chi^{-1}\beta\chi\chi'
=\chi'^{-1}\chi^{-1}\gamma\alpha\gamma^{-1}\chi\chi'
=(\chi''\chi')^{-1}\alpha(\chi''\chi').
$$
Because both $\chi'$ and $\chi''$ are $\n$-split braids, so is $\chi''\chi'$.
Therefore $\chi''\chi'$ is of the form
$$
\chi''\chi' =
\myangle{1}_\n
(\chi_0\oplus {\textstyle\bigoplus_{i=1}^r}
(\chi_{i,1}\oplus\cdots\oplus\chi_{i,m}) )_\n.
$$
Because the $\n$-exterior braids of $\alpha$ and $\chi''\chi'$ are
pure braids, the conjugation $(\chi''\chi')^{-1}\alpha(\chi''\chi')$
can be computed component-wise:
\begin{equation}\label{eq:2}
(\chi''\chi')^{-1}\alpha(\chi''\chi')=
\myangle{\hat\alpha}_\n
(\chi_0^{-1}\alpha_0\chi_0\oplus
{\textstyle\bigoplus_{i=1}^r}
(\chi_{i,1}^{-1}\alpha_i\chi_{i,1}\oplus\cdots\oplus
\chi_{i,m}^{-1}\alpha_i\chi_{i,m}) )_\n.
\end{equation}
Comparing $(\ref{eq:1})$ and $(\ref{eq:2})$, we can see that
$\alpha_i$ and $\beta_i''$ are conjugate for $0\le i\le r$.
Because $\alpha_i$ and $\beta_i''$ are conjugacy representatives
with respect to $(m, d_0)$,
we have $\alpha_i=\beta_i''$ for $0\le i\le r$, hence $\alpha=\beta''$.
Because
$$
\alpha=\beta''=(\chi\chi')^{-1}\beta(\chi\chi')
$$
and $\chi,\chi'\in Z(\mu_{m,\d})$, $\alpha$ and $\beta$ are conjugate
in $Z(\mu_{m,\d})$.
\end{proof}

We remark that in the above proof
we can show by a straightforward computation that
$\beta_i'=\beta_{k_i}$ for $i=1,\ldots,r$.
In particular, the $\n$-interior braids $\beta_0',\ldots,\beta_r'$
are conjugacy representatives with respect to $(m,d_0)$,
hence we can take $\chi'=1$ and $\beta''=\beta'$.
This means that $\alpha=\chi^{-1}\beta\chi$,
where $\chi$ depends only on $\n$ and $\hat\gamma$.
However we introduced $\chi'$ and $\beta''$
because it is much simpler than showing $\beta_i'=\beta_{k_i}$.

\subsection*{Acknowledgements}

The authors are grateful to the anonymous referee
for valuable comments and suggestions.
This work was done partially while the authors were visiting
Department of Mathematical and Computing Sciences,
Tokyo Institute of Technology in February 2008.
We thank Sadayoshi Kojima and Eiko Kin
for their hospitality and interest in this work.
This work was supported by the Korea Research Foundation Grant funded
by the Korean Government (MOEHRD, Basic Research Promotion Fund)
(KRF-2008-331-C00039).

\end{document}